




\documentclass[12pt]{amsart}
\usepackage{amssymb}
\usepackage{amsmath, amscd}
\usepackage{amsthm}
\usepackage{amsmath}
\usepackage{comment}
\usepackage[table,xcdraw]{xcolor}
\usepackage{ulem}
\usepackage{tikz}
\usepackage{float}
\usepackage{graphicx}
\usepackage{circuitikz}
\usetikzlibrary{positioning}
\usepackage[colorlinks=true, linkcolor=blue,urlcolor=blue]{hyperref}

\newtheorem{theorem}{Theorem}[section]

\newtheorem{proposition}[theorem]{Proposition}
\newtheorem{lemma}[theorem]{Lemma}

\newtheorem{corollary}[theorem]{Corollary}
\theoremstyle{definition}

\newtheorem{example}[theorem]{Example}
\newtheorem{definition}[theorem]{Definition}

\newtheorem{remark}[theorem]{Remark}

\newtheorem{problem}[theorem]{Problem}


\topmargin0cm \headheight0cm \headsep1cm \topskip0cm \textheight23cm \footskip1.8cm \textwidth15cm

\oddsidemargin0cm \evensidemargin0cm

\parindent15pt







\topmargin0cm \headheight0cm \headsep1cm \topskip0cm \textheight23cm \footskip1.8cm \textwidth15cm

\oddsidemargin0cm \evensidemargin0cm

\parindent15pt

\begin{document}

\author[O. Hasanzadeh]{Omid Hasanzadeh}
\address{Department of Mathematics, Tarbiat Modares University, 14115-111 Tehran Jalal AleAhmad Nasr, Iran}
\email{o.hasanzade@modares.ac.ir; hasanzadeomiid@gmail.com}
\author[A. Moussavi]{Ahmad Moussavi$^*$}
\address{Department of Mathematics, Tarbiat Modares University, 14115-111 Tehran Jalal AleAhmad Nasr, Iran}
\email{moussavi.a@modares.ac.ir; moussavi.a@gmail.com}
\author[Peter Danchev]{Peter Danchev}
\address{Institute of Mathematics and Informatics, Bulgarian Academy of Sciences, 1113 Sofia, Bulgaria}
\email{danchev@math.bas.bg; pvdanchev@yahoo.com}

\thanks{$^*$Corresponding author: Ahmad Moussavi, email: moussavi.a@modares.ac.ir; moussavi.a@gmail.com}

\title[2-$\Delta$U rings]{Rings with 2-$\Delta$U property}
\keywords{2-$\Delta$U ring, $\Delta$U ring, $\Delta (R)$, Matrix ring}
\subjclass[2010]{16S34, 16U60}

\maketitle




\begin{abstract}
Rings in which the square of each unit lies in  $1+\Delta(R)$,   are said to be $2$-$\Delta U$, where $J(R)\subseteq\Delta(R) =: \{r \in R | r + U(R) \subseteq U(R)\}$.
The set $\Delta (R)$ is the largest Jacobson radical subring of $R$ which is closed with respect
to multiplication by units of $R$ and is studied in \cite{2}. The class of $2$-$\Delta U$ rings consists several rings including $UJ$-rings, $2$-$UJ$ rings and  $\Delta U$-rings, and we observe that $\Delta U$-rings are $UUC$. The structure of $2$-$\Delta U$ rings is studied under various conditions. Moreover, the $2$-$\Delta U$ property is studied under some algebraic constructions. 	

\end{abstract}

\section{Introduction and Basic Concepts}

In the current paper, let $R$ denote an associative ring with identity element, not necessarily commutative. Typically, for such a ring $R$, the sets $U(R)$, $Nil(R)$, $C(R)$,  $Id(R)$ and $J(R)$ represent the set of invertible elements, the set of nilpotent elements, the set of central elements,  the set of idempotent elements in $R$ and the Jacobson radical of $R$ respectively. Additionally, the ring of \( n \times n \) matrices over \( R \) and the ring of \( n \times n \) upper triangular matrices over \( R \) are denoted by \( {\rm M}_n(R) \) and \( {\rm T}_n(R) \), respectively. Traditionally, a ring is termed {\it abelian} if each idempotent element is central, meaning that $Id(R) \subseteq C(R)$.

The main material of the present study is the set $\Delta(R)$ was handled by Lam \cite [Exercise 4.24]{23} and recently, studied by  Leroy-Matczuk \cite{2}. As pointed out by the authors in \cite [Theorem 3 and 6]{2}, $\Delta(R)$ is the largest Jacobson radical subring of $R$ which is closed with respect to multiplication by all units (quasi-invertible elements) of $R$. Also, $J(R) \subseteq \Delta(R)$, and  $\Delta(R)=J(T)$, where $T$ is the subring of $R$ generated by units of $R$, and the equality $\Delta(R)=J(R)$ holds if, and only if, $\Delta(R)$ is an ideal of $R$.

It is well-known that $1+J(R) \subseteq U(R)$. A ring $R$ is said to be an $UJ$-ring if the reverse inclusion holds, i.e. $U(R)=1+J(R)$ \cite{14}. By \cite{1}, a ring $R$ is said to be 2-$UJ$ if for each $u\in U(R)$, $u^2=1+j$ where $j\in J(R)$.  These rings are generalization of $UJ$ rings. They showed that for 2-$UJ$ rings the notions semi-regular, exchange and clean rings are equivalent. Recall that a ring $R$ is called an $UU$-ring if $U(R)=1+Nil(R)$ (see \cite{12}). As a generalization of $UU$ rings Sheibani and Chen in \cite{10} introduced 2-$UU$ rings. A ring $R$ is called 2-$UU$ if the square of every unit is the sum of $1_{R}$ and a nilpotent. They showed that $R$ is strongly 2-nil-clean if, and only if, $R$ is an exchange 2-$UU$ ring. A ring $R$ is said to be regualr (unit-regular) in the sense of von Neumann if for every $a\in R$, there is an $x\in R$ (resp., $x\in U(R)$) such that $axa=a$, and $R$ is said to be strongly regular if for each $a\in R$, $a\in a^2R$. Recall that a ring $R$ is exchange if for each $a\in R$, there exist $e^2=e\in aR$ such that $1-e\in (1-a)R$, and a ring $R$ is clean if every element of $R$ is a sum of an idempotent and an unit \cite{8}. Notice that every clean ring is exchange, but the converse is not true in general, whereas the converse is true in the abelian case (see \cite[Proposition 1.8]{8}). A ring $R$ is called semi-regular if $R/J(R)$ is regular and idempotents lift modulo $J(R)$. Semi-regular rings are exchange, but the converse is not true in general (see \cite{8}). A ring $R$ is Boolean if every element of $R$ is idempotent.

 According to Chen \cite {16}, an element of a ring is called $J$-clean provided that it can be written as the sum of an idempotent and an element from its Jacobson radical. A ring is $J$-clean in the case where each of its element is $J$-clean or equivalently $R/J(R)$ is Boolean and idempotents lift modulo $J(R)$ (which is also called semi-boolean in \cite{25}). It was shown in \cite{14} that a ring $R$ is $J$-clean if, and only if, $R$ is a clean $UJ$ ring. In 2019, Fatih Karabacak et al. introduced new rings that are a generalization of $UJ$ rings. They called these rings $\Delta U$ \cite{7}. A ring $R$ is said to be $\Delta U$ if $1+\Delta(R)=U(R)$. Due to Fatih Karabacak et al. \cite{7}, a ring $R$ is called $\Delta$-clean if every element of $R$ is a sum of an idempotent and an element from the $\Delta(R)$. $\Delta$-clean rings are clean, but the converse is not true in general. They showed in \cite{7} that a ring $R$ is $\Delta$-clean if, and only if, $R$ is a clean $\Delta U$ ring. 

As a generalization of the above concepts, we introduce  2-$\Delta U$ rings. A ring $R$ is called 2-$\Delta U$ if the square of each unit is a sum of an idempotent and an element from the $\Delta(R)$ (equivalently, for each $u\in U(R)$, $u^2=1+r$ where $r\in \Delta(R)$). Clearly all $\Delta U$ rings and hence unit uniquely  clean rings  and rings with only two units are 2-$\Delta U$. Also,  2-$UJ$ rings and hence $UJ$ rings are $2$-$\Delta U$ but, the converse is not true in general. Our motivating tool is to give a satisfactory description of 2-$\Delta U$ rings by comparing their crucial properties with these of 2-$UU$ and 2-$UJ$ rings, respectively, as well as to find some new exotic properties of 2-$\Delta U$ rings that are not too characteristically frequently seen in the existing literature. 

We are now planning to give a brief program of our results established in the sequel: In section 2, we achieve to exhibit some major properties and characterizations of $\Delta U$ rings in various different aspects (see, for instance, Propositions \ref{2.1}, \ref{2.2} and \ref{2.3} and Theorem \ref{2.4}). In section 3, we establish some fundamental characterization properties of 2-$\Delta U$ rings that are mainly stated and proved in Theorems \ref{3.5}, \ref{3.13}, \ref{3.14} and \ref{3.16} and the other statments associated with them. In section 4, we give some extensions of 2-$\Delta U$ rings, for instance, polynomial extensions, matrix extensions, trivial extensions and  Morita contexts. We close our work in the final section with challenging questions, namely Problems \ref{5.1}, \ref{5.2}, \ref{5.3} and \ref{5.4}.

Now, we have the following diagram which violates the relationships between the defined above classes of rings:

\begin{center}
	\resizebox{0.55\textwidth}{!}{ 
		\begin{tikzpicture}[node distance=1.5cm and 2cm]
			\node[draw=cyan,fill=cyan!25,minimum width=2cm,minimum height=1cm,text width=1.75cm,align=center]  (a) {UJ};
			\node[draw=cyan,fill=cyan!25,minimum width=2cm,minimum height=1cm,text width=2.5cm,align=center,right=of a](b){2-UJ};
			\node[draw=cyan,fill=cyan!25,minimum width=2cm,minimum height=1cm,text width=1.75cm,align=center,below=of b](c){$\Delta U$};
			\node[draw=cyan,fill=cyan!25,minimum width=2cm,minimum height=1cm,text width=1.75cm,align=center,right=of b](d){2-$\Delta U$};
			\node[draw=cyan,fill=cyan!25,minimum width=2cm,minimum height=1cm,text width=1.75cm,align=center,below=of c]  (e) {UUC};
			\draw[-stealth] (a) -- (b);
			\draw[-stealth] (a) |- (c);
			\draw[-stealth] (b) -- (d);
			\draw[-stealth] (c) -- (e);
			\draw[-stealth] (c) -| (d);
		\end{tikzpicture}
	}
\end{center}

\section{$\Delta$U rings} 

In this section, we investigate some properties of $\Delta U$ rings. We give relations between $\Delta U$ rings and some related rings.

\begin{definition}[\cite{7}]
A ring $R$ is called $\Delta U$  if $1+\Delta(R)=U(R)$. 	
\end{definition}

By Calugareanu and  Zhou, a ring $R$ is called $UUC$ if every unit is uniquely clean \cite {13}. 
\begin{proposition}\label{2.1}
Let \(R\) be a $\Delta$U ring. Then:
\begin{enumerate}
\item 
\(U(R) + U(R) \subseteq \Delta(R)\).
\item 
\(R\) is a \(UUC\) ring.
\item 
\((U(R) + U(R)) \cap Id(R) = 0\).
\end{enumerate}
\end{proposition}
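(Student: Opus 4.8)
The plan is to isolate two elementary consequences of the $\Delta U$ hypothesis and then deduce all three items from them. The first is that $2\in\Delta(R)$: since $-1\in U(R)=1+\Delta(R)$, we may write $-1=1+r$ with $r\in\Delta(R)$, whence $r=-2$ and therefore $2\in\Delta(R)$, because $\Delta(R)$ is a subring and in particular is closed under taking negatives. The second is the general fact (not needing $\Delta U$) that $\Delta(R)$ contains no nonzero idempotent: if $e=e^2\in\Delta(R)$, then $-e\in\Delta(R)$ gives $1-e\in U(R)$, and $e(1-e)=e-e^2=0$ forces $e=e(1-e)(1-e)^{-1}=0$.

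For item (1), I would take $u,v\in U(R)$, write $u=1+a$ and $v=1+b$ with $a,b\in\Delta(R)$ using the $\Delta U$ property, and observe $u+v=2+(a+b)$; since $2\in\Delta(R)$ by the first fact and $\Delta(R)$ is closed under addition, this sum lies in $\Delta(R)$, which is exactly the assertion $U(R)+U(R)\subseteq\Delta(R)$.

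For item (3), I would simply combine (1) with the second fact: $(U(R)+U(R))\cap Id(R)\subseteq\Delta(R)\cap Id(R)=\{0\}$. For item (2), recall that $R$ is $UUC$ precisely when every unit admits a unique clean decomposition; existence being trivial (one has $u=0+u$), the task is uniqueness, so I would show that if $e=e^2$ and $u-e\in U(R)$ for some $u\in U(R)$, then $e=0$. Writing $u=1+s$ and $u-e=1+t$ with $s,t\in\Delta(R)$ yields $e=s-t\in\Delta(R)$, and the second fact forces $e=0$; hence $0$ is the only idempotent one can subtract from $u$ to leave a unit, so each unit is uniquely clean and $R$ is $UUC$.

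I do not anticipate a serious obstacle: essentially all the content lies in the two preliminary observations — that $2\in\Delta(R)$ and that $\Delta(R)$ is idempotent-free — after which everything is routine manipulation inside $U(R)=1+\Delta(R)$. The only point requiring a little care is pinning down the precise definition of $UUC$ (uniqueness of the idempotent summand in a clean decomposition of a unit), so that item (2) is stated and argued as a uniqueness claim rather than merely an existence one.
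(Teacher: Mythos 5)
Your proposal is correct and follows essentially the same route as the paper: item (1) via $2\in\Delta(R)$ together with the subring property of $\Delta(R)$, item (2) by showing the idempotent in any clean decomposition of a unit lands in $\Delta(R)$ and hence vanishes, and item (3) by combining the two. The only difference is that you prove the auxiliary facts ($2\in\Delta(R)$ and that $\Delta(R)$ contains no nonzero idempotent) directly, where the paper cites them from earlier work, which is a harmless and self-contained variation.
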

\begin{proof}
\begin{enumerate}
\item 
Let \(x \in U(R) + U(R)\). Then \(x = u_1 + u_2\) where, \(u_1, u_2 \in U(R) = 1 + \Delta(R)\), so \(x = 1+r_1 + 1+r_2 = 2 + (r_1 + r_2)\) where, \(r_1, r_2 \in \Delta(R)\). On the other hand, we know that $2 \in \Delta(R)$ by \cite [Proposition 2.4]{7}. Also, $\Delta(R)$ is a subring of $R$. Then, \(x \in \Delta(R)\).
\item 
Assume that \(u = e + v\) where, \(u, v \in U(R)\) and \(e \in Id(R)\). It suffices to show that \(e = 0\). As \(R\) is \(\Delta U\), \(u = 1 + r\) and \(v = 1 + r^\prime\) where, \(r, r^\prime \in \Delta(R)\). Hence, \(e = 0\) by \cite [Proposition 15]{2}.
\item 
This is clear by (i) and (ii).
\end{enumerate}
\end{proof}

\begin{proposition}\label{2.2}
Let \( R \) be a $\Delta$U ring and let \( \bar{R} = R/J(R) \). The following hold:
\begin{enumerate}
\item 
For any \( u_1, u_2 \in U(R) \), \( u_1 + u_2 \neq 1 \).
\item 
For any \( \bar{u_1}, \bar{u_2} \in U(\bar{R}) \), \( \bar{u_1} + \bar{u_2} \neq \bar{1} \).
\end{enumerate}
\end{proposition}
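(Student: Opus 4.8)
The statement is Proposition~\ref{2.2}: in a $\Delta U$ ring $R$, no two units sum to $1$, and the same holds in $\bar R = R/J(R)$. The natural approach is to prove part~(ii) first — working in the quotient — and then derive part~(i) from it, since $U(R)$ maps onto $U(\bar R)$ whenever idempotents-free reasoning is not needed; in fact it is cleaner to observe that $\bar R$ is again a $\Delta U$ ring, reduce everything to the statement ``$u_1+u_2\neq 1$'' in an arbitrary $\Delta U$ ring, and then get both parts at once.

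\medskip\noindent
\emph{Step 1: $\bar R$ is $\Delta U$.} I would first record that the class of $\Delta U$ rings is closed under passage to $R/I$ for any ideal $I\subseteq J(R)$; in particular $\bar R = R/J(R)$ is $\Delta U$. This is presumably already available from \cite{7} (the $\Delta U$ property is preserved by such quotients because $\Delta(R/I)=\Delta(R)/I$ when $I\subseteq J(R)\subseteq\Delta(R)$, and units lift modulo $J(R)$). So it suffices to prove the single assertion: if $S$ is a $\Delta U$ ring and $u_1,u_2\in U(S)$, then $u_1+u_2\neq 1$.

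\medskip\noindent
\emph{Step 2: the core computation.} Suppose toward a contradiction that $u_1+u_2 = 1$ with $u_1,u_2\in U(S)$. Since $S$ is $\Delta U$, write $u_1 = 1+r_1$ and $u_2 = 1+r_2$ with $r_1,r_2\in\Delta(S)$. Then $1 = u_1+u_2 = 2 + (r_1+r_2)$, so $r_1+r_2 = -1$. But $\Delta(S)$ is a subring of $S$ (by \cite{2}), hence closed under addition, so $-1\in\Delta(S)$ and therefore $1\in\Delta(S)$. Since $\Delta(S)$ is a Jacobson radical ring (again \cite{2}), it cannot contain $1$ (equivalently, $1\in\Delta(S)$ would force $S=\Delta(S)=J(S)$, so $U(S)=\{u : u\in 1+\Delta(S)\}$ would be empty or $S$ would be the zero ring, contradicting $1\in U(S)$ in a nonzero ring). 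This contradiction proves $u_1+u_2\neq 1$.

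\medskip\noindent
\emph{Step 3: assembling the two parts.} Applying Step~2 to $S=R$ gives part~(i) directly. Applying Step~2 to $S=\bar R$, which is $\Delta U$ by Step~1, gives part~(ii). The only point needing care is that in part~(ii) the $\bar u_i$ are \emph{a priori} units of $\bar R$, not necessarily images of units of $R$; but Step~2 is stated purely internally to a $\Delta U$ ring, so no lifting is required and this causes no difficulty. The main (and only mild) obstacle is justifying that $\bar R$ is $\Delta U$ and that $\Delta(S)$ being a radical subring precludes $1\in\Delta(S)$; both are quotable from \cite{2,7}, so the proof is short.
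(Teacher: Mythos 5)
Your proof is correct, but it takes a different route from the paper's. The paper derives part (i) from its Proposition \ref{2.1}: there it first shows $U(R)+U(R)\subseteq\Delta(R)$ (using $2\in\Delta(R)$ from \cite[Proposition 2.4]{7}) and that every $\Delta U$ ring is $UUC$, and then quotes \cite[Example 2.2]{13}, which rules out $u_1+u_2=1$ in a $UUC$ ring; part (ii) is handled exactly as in your Step 1 and 3, by noting $\bar R$ is again $\Delta U$, hence $UUC$, and citing \cite[Example 2.2]{13} once more. You instead prove the key assertion internally to an arbitrary $\Delta U$ ring: from $u_1+u_2=1$ and $u_i=1+r_i$ you get $-1=r_1+r_2\in\Delta(S)$, which is impossible. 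This buys a self-contained, more elementary argument that bypasses both the $UUC$ detour and the external reference \cite{13}, and it even avoids invoking $2\in\Delta(R)$; the paper's route, on the other hand, recycles its Proposition \ref{2.1} and records the (independently interesting) link to unique cleanness of units. One small point of hygiene in your Step 2: the parenthetical claim that $1\in\Delta(S)$ ``would force $S=\Delta(S)=J(S)$'' is not quite right, since $\Delta(S)$ is in general only a subring, not an ideal; but your main justification (a Jacobson radical ring cannot contain the identity, as $\Delta(S)=J(T)$ for the unit-generated subring $T\ni 1$, by \cite{2}) is valid, and in fact the quickest finish is to note that $-1\in\Delta(S)$ already contradicts the definition of $\Delta$, because $-1+1=0$ would then be a unit. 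Your reliance on $\Delta(R/J(R))=\Delta(R)/J(R)$ and unit lifting modulo $J(R)$ for Step 1 is exactly what the paper also uses implicitly, so no gap there.
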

\begin{proof}
\begin{enumerate}
\item 
This is clear by Proposition \ref{2.1} and \cite [Example 2.2]{13}.
\item 
Since \( R \) is $\Delta$U, \( \bar{R}  \) is \( \Delta U \) and hence it is \( UUC \), then the result follows from \cite [Example 2.2]{13}.
\end{enumerate}
\end{proof}

A ring $R$ is called semi-potent if every one-sided ideal not contained in $J(R)$ contains a non-zero idempotent. Moreover, a semi-potent ring $R$ is called potent if idempotents lift modulo $J(R)$.

\begin{proposition}\label{2.3}
Let \( R \) be a potent $\Delta$U ring, and \( \bar{R} = R/J(R) \). Then, we have:
\begin{enumerate}
\item 
For any \( \bar{e} = \bar{e}^2 \in \bar{R} \) and any \( \bar{u_1}, \bar{u_2} \in U(\bar{e} \bar{R} \bar{e}) \), \( \bar{u_1} + \bar{u_2} \neq \bar{e} \).
\item 
There does not exist \( \bar{e} = \bar{e}^2 \in \bar{R} \) such that \( \bar{e} \bar{R} \bar{e} \cong M_2(S) \) for some ring \( S \).
\end{enumerate}
\end{proposition}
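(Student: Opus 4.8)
The plan is to reduce both parts to the ``no two units sum to $\bar e$'' behaviour of $\Delta U$ rings, applied inside the corner ring $\bar e\bar R\bar e$, once I have checked that this corner is itself a $\Delta U$ ring with identity $\bar e$.

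First I would prove (i). Since $R$ is potent, idempotents lift modulo $J(R)$, so I pick $e=e^2\in R$ whose image in $\bar R$ is $\bar e$. Then
\[
\bar e\bar R\bar e=(eRe+J(R))/J(R)\cong eRe/(eRe\cap J(R))=eRe/J(eRe),
\]
using the standard identities $eRe\cap J(R)=eJ(R)e=J(eRe)$. Now $eRe$ is a corner of the $\Delta U$ ring $R$, hence $\Delta U$ (by \cite{7}; it is in any case elementary, since for $v\in U(eRe)$ one has $v+(1-e)\in U(R)$, and $e\Delta(R)e\subseteq\Delta(eRe)$ by the analogous block argument). Moreover, exactly as in the proof of Proposition \ref{2.2}, the quotient of a $\Delta U$ ring by its Jacobson radical is again $\Delta U$ (this rests on units of $A/J(A)$ lifting to units of $A$ and on $\Delta(A/J(A))=\Delta(A)/J(A)$, \cite{2}). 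Hence $\bar e\bar R\bar e$ is a $\Delta U$ ring with identity $\bar e$, so it is $UUC$ by Proposition \ref{2.1}(ii), and applying \cite[Example 2.2]{13} inside $\bar e\bar R\bar e$ (equivalently, Proposition \ref{2.2}(i) there) shows that no two of its units can add up to $\bar e$. This is precisely (i).

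Then (ii) follows quickly from (i). Suppose, for a contradiction, that $\bar e=\bar e^2\in\bar R$ satisfies $\bar e\bar R\bar e\cong M_2(S)$ for some ring $S$. In $M_2(S)$ the matrix $a=\endo{0}{-1}{1}{1}$ is a unit with inverse $a^{-1}=\endo{1}{1}{-1}{0}$, and one checks at once that $a+a^{-1}=I_2$. Transporting $a$ and $a^{-1}$ through the isomorphism yields units $\bar u_1,\bar u_2\in U(\bar e\bar R\bar e)$ with $\bar u_1+\bar u_2=\bar e$, contradicting (i). Therefore no such $\bar e$ exists.

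The hard part is really (i), and concretely the two inheritance facts underlying it: that $eRe$ is $\Delta U$ whenever $R$ is, and that passing to $eRe/J(eRe)$ preserves $\Delta U$ — i.e. that $\Delta(-)$ localizes correctly at idempotents and modulo the radical. Granting these (available from \cite{7} and \cite{2}), (i) is immediate, and (ii) costs only the single explicit computation $a\cdot a^{-1}=a+a^{-1}=I_2$; no further matrix algebra is required.
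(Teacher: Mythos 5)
Your proof is correct and follows essentially the same route as the paper: lift $\bar e$ to an idempotent $e$ using potency, identify $\bar e\bar R\bar e$ with $eRe/J(eRe)$, invoke that corners of $\Delta U$ rings are $\Delta U$ together with Proposition \ref{2.2} (the $UUC$/no-two-units-sum-to-$1$ fact), and for (ii) exhibit the identity of $M_2(S)$ as a sum of two units transported through the isomorphism. The only differences are cosmetic: you spell out the corner and radical-quotient inheritance in more detail and use the pair $a,a^{-1}$ rather than the paper's explicit pair of units.
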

\begin{proof}
\begin{enumerate}
\item 
Given \( \bar{e}, \bar{u_1}, \bar{u_2} \) as in (i), we can assume that \( e^2 = e \in R \), because idempotents lift modulo \( J(R) \). Then, \( \bar{e} \bar{R} \bar{e} \cong eRe/J(eRe) \). Finally, since \( e R e \) is $\Delta$U by \cite [Proposition 2.6]{7}, (i) follows directly by Proposition \ref{2.2}(i).
\item 
Note that, in a \( 2 \times 2 \) matrix ring, it is always true that
		\[
		\begin{pmatrix}
			1 & 0 \\
			0 & 1 
		\end{pmatrix} 
		= \begin{pmatrix}
			1 & 1 \\
			1 & 0
		\end{pmatrix} + \begin{pmatrix}
			0 & -1 \\
			-1 & 0
		\end{pmatrix} \in U(M_2(S)) + U(M_2(S)).
		\]
Hence, there exist \( \bar{u_1}, \bar{u_2} \in U(\bar{e} \bar{R} \bar{e}) \) such that \( \bar{u_1} + \bar{u_2} = \bar{e} \). This is a contradiction with (i).
\end{enumerate}
\end{proof}

A ring $R$ is called reduced if it contains no non-zero nilpotent elements.

\begin{theorem}\label{2.4}
Let \( R \) be a semi-potent ring. The following statements are equivalent:
\begin{enumerate}
\item 
 \( R \) is a $\Delta$U ring.
\item 
 \( R/J(R) \) is  Boolean.
\item 
 \( R \) is a \( UJ \) ring.
\item 
 $R/J(R)$ is a $UU$ ring.
\end{enumerate}
\end{theorem}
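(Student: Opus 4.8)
The plan is to prove the cycle of implications $(i)\Rightarrow(ii)\Rightarrow(iii)\Rightarrow(i)$ together with $(ii)\Leftrightarrow(iv)$, exploiting that for a semi-potent ring $R$ the quotient $\bar R=R/J(R)$ is a semi-potent ring with $J(\bar R)=0$, hence a semi-primitive semi-potent (i.e. ``exchange-like'' idempotent-rich) ring. The equivalence $(ii)\Leftrightarrow(iv)$ is the easy end: a ring with zero Jacobson radical is Boolean if and only if it is $UU$, because in a $UU$ ring every unit is $1+\text{nilpotent}$, and in a semi-primitive ring the only nilpotent that survives is $0$, so $U(\bar R)=\{\bar 1\}$ which forces $\bar R$ to be Boolean (an idempotent-rich ring in which $1$ is the only unit has no element of additive order $>2$ and no non-trivial ``off-diagonal'' structure); conversely Boolean trivially gives $UU$. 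I would record this reduction first.

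Next I would handle $(iii)\Rightarrow(i)$, which is immediate from the general inclusion $J(R)\subseteq\Delta(R)$: if $U(R)=1+J(R)$ then $U(R)\subseteq 1+\Delta(R)\subseteq U(R)$, so $R$ is $\Delta U$. The implication $(i)\Rightarrow(ii)$ is the conceptual core. Assume $R$ is $\Delta U$; then $\bar R=R/J(R)$ is again $\Delta U$ (the image of $\Delta(R)$ lies in $\Delta(\bar R)$ and one checks $U(\bar R)=\overline{U(R)}$ using semi-potence to lift, or one simply quotes that $\Delta U$ passes to quotients by ideals contained in $\Delta(R)$, as established in \cite{7}). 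So it suffices to show: a semi-primitive semi-potent $\Delta U$ ring is Boolean. Since $J(\bar R)=0$ we get $\Delta(\bar R)=J(\bar R)=0$ exactly when $\Delta(\bar R)$ is an ideal; but more directly, $\Delta U$ says $U(\bar R)=1+\Delta(\bar R)$, and I would argue $\Delta(\bar R)=0$: if $0\neq r\in\Delta(\bar R)$, semi-potence applied to the right ideal $r\bar R$ (if it is not in $J(\bar R)=0$, which it isn't since $r\neq0$) produces a nonzero idempotent $e\in r\bar R\subseteq\Delta(\bar R)$ (as $\Delta$ is closed under right multiplication by ring elements? — careful here, $\Delta$ is only closed under multiplication by units, so instead use that $\Delta(\bar R)=J(S)$ for $S$ the subring generated by units, contains no nonzero idempotent). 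Hence $\Delta(\bar R)$ contains no nonzero idempotent, and by semi-potence any nonzero right ideal inside it would have to meet $J(\bar R)=0$, forcing $\Delta(\bar R)=0$. Therefore $U(\bar R)=\{\bar 1\}$.

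It remains to deduce that an idempotent-rich (semi-potent, semi-primitive) ring with a unique unit is Boolean. First, $\bar 1=-\bar 1$ since $-\bar 1\in U(\bar R)$, so $2=0$ in $\bar R$. For any $a\in\bar R$, if $a$ were not idempotent then the right ideal generated by $a-a^2$ or by a suitable element would, by semi-potence, contain a nonzero idempotent $e$, and inside $e\bar R e$ one still has a unique unit (the corner of a $\Delta U$ ring is $\Delta U$ by \cite[Proposition 2.6]{7}, and its only unit must be $e$); a standard argument shows a ring with $2=0$ whose only unit is its identity, containing ``enough'' idempotents, cannot contain a copy of anything but a Boolean ring — the key point is that such corners are division rings when the idempotent is primitive, and a division ring with a unique unit is $\mathbb F_2$. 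Assembling: $\bar R$ is von Neumann regular (semi-potent $+$ semi-primitive with all corners reduced) with all idempotents central, i.e. a Boolean ring. The main obstacle I anticipate is precisely this last step — turning ``unique unit'' plus ``semi-potent'' plus ``semi-primitive'' into ``Boolean'' rigorously — and in particular verifying that $\Delta(\bar R)$ vanishes, since $\Delta$ is only a unit-closed subring rather than an ideal, so one cannot naively apply the idempotent-lifting/semi-potence machinery to it without going through the description $\Delta(\bar R)=J(T)$ with $T=\langle U(\bar R)\rangle=\mathbb Z\cdot\bar1$, whence $\Delta(\bar R)=J(\mathbb F_2\cdot\bar1)=0$ cleanly; I would lean on that description to close the gap.
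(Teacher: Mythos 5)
Your outline has two genuine gaps, and they sit exactly at the points you yourself flag as delicate. First, your whole route for $(i)\Rightarrow(ii)$ goes through the claim $\Delta(\bar R)=0$ for $\bar R=R/J(R)$, and neither argument you give establishes it. Semi-potence is a statement about one-sided ideals, while $\Delta(\bar R)$ is only a subring closed under multiplication by units: the nonzero idempotent produced inside $r\bar R$ need not lie in $\Delta(\bar R)$, so ``$\Delta(\bar R)$ contains no nonzero idempotent'' gives no contradiction and does not force $\Delta(\bar R)=0$. Your fallback, $\Delta(\bar R)=J(T)$ with $T=\langle U(\bar R)\rangle=\mathbb{Z}\cdot\bar 1$, is circular, since $T=\mathbb{Z}\cdot\bar 1$ presupposes that all units of $\bar R$ are $\pm\bar 1$ --- essentially the conclusion you are after. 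Note also that $J=0$ alone cannot give $\Delta=0$ (the paper's Example \ref{3.31}, $F_2\langle x,y\rangle/\langle x^2\rangle$, has $J=0$ and $\Delta\neq 0$), so semi-potence must be used in an essential, correct way. Finally, even granting $U(\bar R)=\{\bar 1\}$, your last step (``unique unit $+$ semi-potent $+$ semi-primitive $\Rightarrow$ Boolean'' via primitive idempotents and division-ring corners) is only a sketch; primitive idempotents and the regularity claim are not available in this generality. The paper avoids all of this: it never claims $\Delta(\bar R)=0$, but instead quotes \cite[Theorem 4.4]{7} to get that the semi-primitive semi-potent $\Delta U$ ring is reduced (hence abelian), then for $x$ with $x-x^2\neq 0$ uses semi-potence to find $e=e^2=(x-x^2)y$, shows $ex$ and $e(1-x)$ are units of the corner $eRe$ (itself $\Delta U$) with $ex+e(1-x)=e$, contradicting Proposition \ref{2.2}(i).

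Second, your treatment of $(ii)\Leftrightarrow(iv)$ is not ``the easy end'' and the argument given is incorrect: ``in a semi-primitive ring the only nilpotent that survives is $0$'' is false (matrix rings over fields, or again $F_2\langle x,y\rangle/\langle x^2\rangle$, are semi-primitive with plenty of nilpotents), so $UU$ plus $J(\bar R)=0$ does not yield $U(\bar R)=\{\bar 1\}$, and the subsequent ``idempotent-rich ring with unique unit is Boolean'' step is again unproved. The implication $(iv)\Rightarrow(ii)$ genuinely needs the semi-potent hypothesis together with nontrivial structure results: the paper deduces that the semi-potent $UU$ ring $R/J(R)$ is strongly nil-clean, hence exchange $UU$, hence Boolean, citing \cite[Theorem 2.25]{11} and \cite[Theorems 4.3 and 4.1]{12}. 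The directions $(ii)\Rightarrow(iii)$, $(iii)\Rightarrow(i)$ and $(ii)\Rightarrow(iv)$ in your proposal are fine, but as it stands the two implications above are not proved.
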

\begin{proof}
\( (i) \Rightarrow (ii) \) Since \( R \) is semi-potent, \( R/J(R) \) is semi-potent (indeed, potent). Also, \( R/J(R) \) is $\Delta$U. So without loss of generality it can be assumed that \( J(R) = 0 \). Then by \cite[Theorem 4.4]{7}, \( R \) is reduced and hence it is abelian. Now, assume that there exists \( x \in R \) such that \( x - x^2 \neq 0 \) in \( R \). Since \( R \) is a semi-potent ring, there exists \( e = e^2 \in R \) such that \( e \in (x - x^2)R \). So, write \( e = (x - x^2)y \) for some \( y \in R \). Since \( e \) is central, \( [e r (1-e)]^2 = 0 = [(1-e) re]^2 \), we have \( er(1-e) = 0 = e (1-e) re \). We can write:
	\[
	e = e x. e (1-e). ey,
	\]
so that both \( ex, e(1-x) \in U(eRe) \). But, we know that \( eRe \) is a $\Delta$U ring. However, \( e x + e (1-x) = e \), which contradicts Proposition \ref{2.2}(i). Therefore, \( R \) is a Boolean ring, as desired.
	
\( (ii) \Rightarrow (iii) \) Let us assume \( u \in U(R) \). Then, \( \bar{u} \in U(\bar{R} = R/J(R)) \). Since \( \bar{R} \) is a Boolean ring, we have \( \bar{u} = \bar{1} \), which implies \( u-1 \in J(R) \).
	
\( (iii) \Rightarrow (i) \) This is clear, because always we have \( J(R) \subseteq \Delta(R) \).

\( (ii) \Rightarrow (iv) \) This is obvious.

\( (iv) \Rightarrow (ii) \) We know that $R/J(R)$ is semi-potent. Then, it is strongly nil-clean by \cite[Theorem 2.25]{11}. Hence $R/J(R)$ is an exchange $UU$ ring by \cite[Theorem 4.3]{12}. Thus, it is Boolean  by \cite[Theorem 4.1]{12}.
\end{proof}

\begin{corollary}\label{2.5}
A regular ring \( R \) is $\Delta$U if, and only if, \( R \) is \( UJ \) if, and only if, \( R \) is \( UU \) if, and only if, \( R \) is Boolean.
\end{corollary}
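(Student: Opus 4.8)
The plan is to reduce the whole chain of equivalences to Theorem \ref{2.4} by verifying that a von Neumann regular ring meets its hypothesis and, crucially, has trivial Jacobson radical. First I would record two standard facts about a regular ring $R$. The first is that $J(R)=0$: if $a\in J(R)$, write $a=aba$; then $1-ab$ is a unit and $(1-ab)a=0$ forces $a=0$. The second is that $R$ is semi-potent, in fact potent. Indeed, given any nonzero $a\in R$, choose $b$ with $aba=a$; then $e:=ab$ satisfies $e^2=abab=ab=e$, lies in $aR$, and is nonzero because $ea=aba=a\neq 0$. Hence every right ideal not contained in $J(R)=0$ (equivalently, every nonzero right ideal) contains a nonzero idempotent, so $R$ is semi-potent; and idempotents trivially lift modulo the zero ideal, so $R$ is potent.

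Next, since $J(R)=0$ we have $R/J(R)=R$. Therefore the four conditions listed in Theorem \ref{2.4} specialize, in the present setting, to: $R$ is $\Delta U$; $R$ is Boolean; $R$ is $UJ$; and $R$ is a $UU$ ring. Because $R$ is semi-potent, Theorem \ref{2.4} applies verbatim and yields the equivalence of all four statements, which is exactly the assertion of the corollary.

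I do not expect any genuine obstacle in this argument; it is essentially a specialization of the preceding theorem. The only two points that require a word of justification are that a regular ring is semi-potent (needed so that Theorem \ref{2.4} is applicable at all) and that $J(R)=0$ for such a ring (which is what collapses $R/J(R)$ onto $R$ and converts ``$R/J(R)$ is Boolean'' and ``$R/J(R)$ is $UU$'' into ``$R$ is Boolean'' and ``$R$ is $UU$'', respectively). Both are routine, so the proof will be short.
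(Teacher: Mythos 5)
Your proposal is correct and follows exactly the same route as the paper: observe that a regular ring has $J(R)=0$ and is semi-potent, so Theorem \ref{2.4} applies with $R/J(R)=R$, yielding all four equivalences. The paper's proof is just a terser version of yours, taking the two standard facts about regular rings for granted.
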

\begin{proof}
Since $R$ is regular, $J(R)=0$ and $R$ is semi-potent. So the result follows from Theorem \ref{2.4}.	
\end{proof}

\begin{corollary}\label{2.6}
Let \( R \) be a potent ring. Then, the following are equivalent:
\begin{enumerate}
\item 
\( R \) is a $\Delta$U ring.
\item 
\( R/J(R) \) is a $\Delta$U ring.
\item 
\( R/J(R)\) is a Boolean ring.
\item 
\( R \) is a \( UJ \) ring.
\item 
\( R/J(R) \) is a \( UJ \) ring.
\item 
\( R/J(R) \) is a \( UU \) ring.
\end{enumerate}
\end{corollary}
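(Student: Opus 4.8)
The plan is to obtain all six equivalences from Theorem \ref{2.4}, applied twice: once to $R$ itself and once to the factor ring $\bar R = R/J(R)$. First, since every potent ring is by definition semi-potent, Theorem \ref{2.4} applies to $R$ directly, and its four conditions read precisely ``$R$ is $\Delta U$'' (our (i)), ``$R/J(R)$ is Boolean'' (our (iii)), ``$R$ is $UJ$'' (our (iv)), and ``$R/J(R)$ is $UU$'' (our (vi)). Hence (i), (iii), (iv) and (vi) are equivalent with no extra work.

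It then remains to bring (ii) and (v) into the circle, and for this I would set $\bar R = R/J(R)$ and verify that Theorem \ref{2.4} applies to $\bar R$ as well. Two observations are needed: that $\bar R$ is semi-potent, which is exactly the passage ``$R$ semi-potent $\Rightarrow$ $R/J(R)$ semi-potent'' noted inside the proof of Theorem \ref{2.4}; and that $J(\bar R) = 0$, which holds for every ring because $R/J(R)$ is semiprimitive. Applying Theorem \ref{2.4} to $\bar R$ and substituting $J(\bar R) = 0$, its four conditions become: $\bar R$ is $\Delta U$ (our (ii)); $\bar R/J(\bar R) = \bar R$ is Boolean (our (iii)); $\bar R$ is $UJ$ (our (v)); and $\bar R/J(\bar R) = \bar R$ is $UU$ (our (vi)). Thus (ii), (iii), (v) and (vi) are equivalent.

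Combining the two applications — which overlap in conditions (iii) and (vi) — yields that all of (i)--(vi) are equivalent, as claimed. There is no serious obstacle here; the only point requiring care is the bookkeeping, namely correctly translating the conditions of Theorem \ref{2.4} for $\bar R$ into statements about $R$ using the identity $J(R/J(R)) = 0$, so that ``$R/J(R)$ is Boolean'' and ``$R/J(R)$ is $UU$'' function as the shared hinge conditions between the two applications.
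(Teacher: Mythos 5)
Your argument is correct, and it diverges from the paper's proof in how the quotient conditions (ii) and (v) are brought into the equivalence. Both you and the paper start identically: potent implies semi-potent, so Theorem \ref{2.4} applied to $R$ gives (i) $\Leftrightarrow$ (iii) $\Leftrightarrow$ (iv) $\Leftrightarrow$ (vi). For the remaining two conditions, the paper simply cites external results — the fact that $R$ is $\Delta U$ if and only if $R/J(R)$ is $\Delta U$ (\cite[Proposition 2.4]{7}) for (i) $\Leftrightarrow$ (ii), and the analogous descent for $UJ$ rings (\cite[Proposition 1.3]{14}) for (iv) $\Leftrightarrow$ (v) — whereas you apply Theorem \ref{2.4} a second time, to $\bar R = R/J(R)$, using that $\bar R$ is semi-potent (the same observation made inside the proof of Theorem \ref{2.4}) and that $J(\bar R)=0$, so that its four conditions collapse onto (ii), (iii), (v), (vi), and the two circles glue along the shared hinges (iii) and (vi). Your route is more self-contained: it needs no facts beyond Theorem \ref{2.4}, semiprimitivity of $R/J(R)$, and the passage of semi-potency to the quotient. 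The paper's route is shorter on the page and, incidentally, rests on equivalences ((i) $\Leftrightarrow$ (ii) and (iv) $\Leftrightarrow$ (v)) that hold for arbitrary rings, not just potent ones, which is extra information your argument does not reproduce but which the corollary does not require.
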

\begin{proof}
We know that every potent ring is semi-potent. Then, (i), (iii), (iv) and (vi) are equivalent by Theorem \ref{2.4}. On the other hand (i) and (ii) are equivalent by \cite[Proposition 2.4]{7}. Also, (iv) and (v) are equivalent by \cite[Proposition 1.3]{14}.	
\end{proof}	

\begin{corollary}\label{2.7}
Let \( R \) be an Artinian ring. Then, the following are equivalent:
\begin{enumerate}
\item 
\( R \) is a $\Delta$U ring.
\item 
\( R \) is a \( UJ \) ring.
\item 
\( R \) is a \( UU \) ring.
\end{enumerate}
\end{corollary}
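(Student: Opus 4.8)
The plan is to derive the corollary from Corollary~\ref{2.6} by threading the $UU$ condition through the equivalences already established there. The first step is to note that every Artinian ring $R$ is potent: it is semiperfect, hence semiregular, so $R/J(R)$ is (von Neumann) regular, idempotents lift modulo $J(R)$, and $R$ is semi-potent. Consequently Corollary~\ref{2.6} applies and already gives $(i)\Leftrightarrow(ii)$, together with the fact that these are equivalent to $R/J(R)$ being Boolean and to $R/J(R)$ being a $UU$ ring. It therefore remains only to link $(iii)$ into this circle, which I would do by proving $(ii)\Rightarrow(iii)$ and $(iii)\Rightarrow(i)$.

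For $(ii)\Rightarrow(iii)$: if $R$ is $UJ$ then $U(R)=1+J(R)$, and since $R$ is Artinian $J(R)$ is nilpotent, so $J(R)\subseteq Nil(R)$; combining this with the always valid inclusion $1+Nil(R)\subseteq U(R)$ yields $U(R)=1+J(R)\subseteq 1+Nil(R)\subseteq U(R)$, so that $U(R)=1+Nil(R)$ and $R$ is $UU$. For $(iii)\Rightarrow(i)$ I would first upgrade the hypothesis to $\bar R:=R/J(R)$: a unit of $\bar R$ always lifts to a unit of $R$, because $\bar a\,\bar b=\bar 1$ forces $ab,ba\in 1+J(R)\subseteq U(R)$, so $a$ has a left and a right inverse and hence $a\in U(R)$; writing $a=1+n$ with $n\in Nil(R)$ gives $\bar a=\bar 1+\bar n$ with $\bar n$ nilpotent, so $\bar R$ is $UU$. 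Since $R$ is potent, Corollary~\ref{2.6} then returns that $R$ is $\Delta U$, closing the cycle $(i)\Rightarrow(ii)\Rightarrow(iii)\Rightarrow(i)$.

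This is in essence bookkeeping layered on top of Corollary~\ref{2.6}, so I do not anticipate a serious obstacle; the one point that needs a little attention is the passage between ``$R$ is $UU$'' and ``$\bar R$ is $UU$'', where the unit--lifting observation (valid in any ring) is the decisive ingredient, together with the verification that Artinian rings indeed meet the hypothesis of Corollary~\ref{2.6}.
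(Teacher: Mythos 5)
Your proposal is correct and follows essentially the same route as the paper: the paper's (very terse) proof likewise reduces to Theorem \ref{2.4}/Corollary \ref{2.6} by noting that an Artinian ring is clean (hence semi-potent/potent) and uses $J(R)\subseteq Nil(R)$ to tie the $UU$ condition into the chain. Your write-up merely fills in the details the paper leaves implicit (Artinian $\Rightarrow$ potent, the $UJ\Rightarrow UU$ computation, and the passage from $R$ being $UU$ to $R/J(R)$ being $UU$ via unit lifting), all of which are sound.
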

\begin{proof}
We know that every Artinian ring is always clean. Also, since \( R \) is Artinian, we have \( J(R) \subseteq Nil(R) \).
\end{proof}

\begin{corollary}\label{2.8}
Let \( R \) be a finite ring. Then, the following conditions are equivalent:
\begin{enumerate}
\item 
\( R \) is a $\Delta$U ring.
\item 
\( R \) is a \( UJ \) ring.
\item 
\( R \) is a \( UU \) ring.
\end{enumerate}
\end{corollary}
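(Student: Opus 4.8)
The plan is to reduce immediately to Corollary~\ref{2.7}. Every finite ring $R$ satisfies the descending chain condition on left ideals trivially, since it has only finitely many subsets; hence $R$ is Artinian. Therefore the asserted equivalences of (i), (ii) and (iii) are nothing but a special case of Corollary~\ref{2.7}, and in principle no further work is required.

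If one prefers an argument that does not merely invoke the Artinian case verbatim, here is how I would run it. First recall the two standard facts about finite rings: such a ring is semiperfect, hence potent and in particular semi-potent; and its Jacobson radical is nilpotent, so $J(R) \subseteq Nil(R)$. Applying Theorem~\ref{2.4} to the semi-potent ring $R$ yields at once the equivalence of ``$R$ is $\Delta U$'', ``$R$ is $UJ$'', and the auxiliary condition ``$R/J(R)$ is a $UU$ ring''. It then remains only to upgrade this last condition to ``$R$ is a $UU$ ring''. For one direction the inclusion $1 + Nil(R) \subseteq U(R)$ is automatic. For the converse, let $u \in U(R)$; then $\bar u \in U(R/J(R)) = 1 + Nil(R/J(R))$, so $(u-1)^k \in J(R)$ for some $k$, and since $J(R)$ is nil we get $(u-1)^{k\ell} = 0$ for some $\ell$, whence $u \in 1 + Nil(R)$. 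The reverse passage, from $R$ being $UU$ to $R/J(R)$ being $UU$, is clear since nilpotents map to nilpotents under the quotient map. This closes the cycle $(i) \Leftrightarrow (ii) \Leftrightarrow (iii)$.

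I do not anticipate any genuine obstacle: the entire content is that ``finite'' implies ``Artinian'', together with the already-established Corollary~\ref{2.7} (equivalently, with Theorem~\ref{2.4} plus the elementary fact that $J(R)$ is nil for finite rings). The only points demanding a word of justification are ``$R$ semi-potent'' and ``$J(R)$ nilpotent'' for finite $R$, both of which are textbook facts about Artinian rings, so the proof is essentially a citation.
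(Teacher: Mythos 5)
Your proposal is correct and follows essentially the same route as the paper, whose entire proof is the observation that a finite ring is Artinian, so the statement is a special case of Corollary~\ref{2.7}. Your second, more detailed argument is just an unpacking of that reduction (Theorem~\ref{2.4} plus the nilpotency of $J(R)$ for finite rings) and is also sound, but it adds nothing beyond the citation.
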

\begin{proof}
In fact, any finite ring is  Artinian.
\end{proof}

\begin{corollary}\label{2.9}
For a ring \( R \), the following two conditions are equivalent:
\begin{enumerate}
\item 
\( R \) is a potent $\Delta$U ring.
\item 
\( R \) is a \( J \)-clean ring.
\end{enumerate}
\end{corollary}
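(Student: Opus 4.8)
The plan is to argue everything through the quotient \(R/J(R)\), using Theorem \ref{2.4} together with the characterization of \(J\)-clean rings recalled in the Introduction: \(R\) is \(J\)-clean exactly when \(R/J(R)\) is Boolean and idempotents lift modulo \(J(R)\).

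For \((i)\Rightarrow(ii)\): assume \(R\) is a potent \(\Delta U\) ring. Then \(R\) is semi-potent, so Theorem \ref{2.4} applies and tells us that \(R/J(R)\) is Boolean; and since \(R\) is potent, idempotents lift modulo \(J(R)\) by definition. These are precisely the two conditions defining a \(J\)-clean ring, so \(R\) is \(J\)-clean. Thus this direction is immediate once Theorem \ref{2.4} is in hand.

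For \((ii)\Rightarrow(i)\): assume \(R\) is \(J\)-clean, so \(R/J(R)\) is Boolean and idempotents lift modulo \(J(R)\). The first task is to see that \(R\) is semi-potent. Since \(R/J(R)\) is Boolean it is (von Neumann) regular, and together with the lifting of idempotents this makes \(R\) semi-regular; semi-regular rings are exchange, and exchange rings are semi-potent. (Alternatively one can observe directly that a \(J\)-clean ring is clean --- if \(a=e+j\) with \(e^2=e\) and \(j\in J(R)\), then \(a=(1-e)+\bigl((2e-1)+j\bigr)\) with \((2e-1)+j\in U(R)\) because \((2e-1)^2=1\) --- and clean rings are exchange, hence semi-potent.) Having shown that \(R\) is semi-potent with \(R/J(R)\) Boolean, Theorem \ref{2.4} immediately yields that \(R\) is \(\Delta U\); moreover \(R\) is potent, being semi-potent with idempotents lifting modulo \(J(R)\). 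Hence \(R\) is a potent \(\Delta U\) ring.

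The only step that is not routine bookkeeping around Theorem \ref{2.4} and the definition of \(\Delta(R)\) is the claim that a \(J\)-clean (equivalently, semi-boolean) ring is semi-potent; I expect this to be the main, though mild, obstacle, and I would settle it via the chain clean (or semi-regular) \(\Rightarrow\) exchange \(\Rightarrow\) semi-potent, each link being standard and the first two already appearing in the Introduction.
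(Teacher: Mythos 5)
Your proof is correct and follows essentially the same route as the paper: both directions reduce to the equivalence ``semi-potent $\Delta U$ $\Leftrightarrow$ $R/J(R)$ Boolean'' of Theorem \ref{2.4} (the paper uses it via Corollary \ref{2.6}), with the potency hypothesis supplying the lifting of idempotents. The only difference is cosmetic: for \((ii)\Rightarrow(i)\) the paper simply cites \cite[Theorem 3.2]{14} (a $J$-clean ring is a clean $UJ$ ring) to get semi-potency, whereas you derive it self-containedly via $J$-clean $\Rightarrow$ clean (or semi-regular) $\Rightarrow$ exchange $\Rightarrow$ semi-potent, which is a valid and slightly more elementary substitute.
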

\begin{proof}
\((ii) \Rightarrow (i)\) This is clear by \cite [Theorem 3.2]{14} and Corollary \ref{2.6}.
	
\((i) \Rightarrow (ii)\) Applying Corollary \ref {2.6}, we know that \( R/J(R) \) is Boolean. Therefore, for each \( a \in R \), we have \( a - a^2 \in J(R) \). Since \( R \) is a potent ring, there exists an idempotent \( e \in R \) such that \( a - e \in J(R) \). Thus, \( R \) is a \( J \)-clean ring, as wanted.
\end{proof}

Let $Nil_{*}(R)$ denote the prime radical (or lower nil-radical) of a ring $R$, i.e. the intersection of all prime ideals of $R$. We know that $Nil_{*}(R)$ is a nil-ideal of $R$. It is long known that a ring $R$ is called {\it $2$-primal} if its lower nil-radical $Nil_{*}(R)$ consists precisely of all the nilpotent elements of $R$. For instance, it is well known that both reduced rings and commutative rings are both $2$-primal. For an endomorphism $\alpha$ of a ring $R$, $R$ is called {\it $\alpha$-compatible} if, for any $a,b\in R$, $ab=0\Longleftrightarrow a\alpha (b)=0$, and in this case $\alpha$ is clearly injective.

Let $R$ be a ring and $\alpha : R \to R$ is a ring endomorphism and $R[x; \alpha]$ denotes the ring of skew polynomial over $R$, with multiplication defined by $xr = \alpha(r)x$ for all $r \in R$. In particular, $R[x] = R[x; 1_R]$ is the ring of polynomial over $R$.

\begin{proposition}\label{2.10}
Let \( R \) be a \(2\)-primal ring and \( \alpha \) be an endomorphism of \( R \). If \( R \) is \( \alpha \)-compatible, then
	\[
	\Delta(R[x; \alpha]) = \Delta(R) + J(R[x; \alpha]).
	\]
\end{proposition}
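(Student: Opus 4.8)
The inclusion $\supseteq$ should be the easier direction: $J(R[x;\alpha]) \subseteq \Delta(R[x;\alpha])$ is automatic, so it suffices to check $\Delta(R) \subseteq \Delta(R[x;\alpha])$. For this I would take $r \in \Delta(R)$ and an arbitrary unit $f = a_0 + a_1 x + \cdots + a_n x^n \in U(R[x;\alpha])$, and show $r + f \in U(R[x;\alpha])$. The standard structural fact I would invoke here is that, because $R$ is $2$-primal and $\alpha$-compatible, the nilpotent/prime-radical behaviour of $R[x;\alpha]$ is controlled by $R$: in particular a polynomial is a unit in $R[x;\alpha]$ exactly when its constant term is a unit in $R$ and the higher coefficients lie in the nil-radical (an Amitsur-type description of $J(R[x;\alpha])$ for such rings). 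Granting that, $a_0 \in U(R)$, so $r + a_0 \in U(R)$ by definition of $\Delta(R)$, and the higher-degree part of $r+f$ is unchanged and still has coefficients in $Nil_*(R)$; hence $r+f$ is again a unit. So $r \in \Delta(R[x;\alpha])$.

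For the harder inclusion $\subseteq$, take $g = b_0 + b_1 x + \cdots + b_m x^m \in \Delta(R[x;\alpha])$ and aim to show $b_0 \in \Delta(R)$ and $g - b_0 \in J(R[x;\alpha])$. First I would show $b_0 \in \Delta(R)$: given $u \in U(R) \subseteq U(R[x;\alpha])$, we have $g + u \in U(R[x;\alpha])$, and by the unit-description above its constant term $b_0 + u$ is a unit of $R$; since $u$ was arbitrary, $b_0 \in \Delta(R)$. It then remains to prove that the higher-degree part $g - b_0 = b_1 x + \cdots + b_m x^m$ lies in $J(R[x;\alpha])$, equivalently (by the Amitsur-type description again) that each $b_i \in Nil_*(R)$ for $i \geq 1$. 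Here is where I would use that $\Delta$ is closed under multiplication by units and is a subring: since $b_0 \in \Delta(R) \subseteq \Delta(R[x;\alpha])$ by the first part, $g - b_0 \in \Delta(R[x;\alpha])$, and $\Delta(R[x;\alpha]) \subseteq J(T)$ where $T$ is the subring generated by units; combined with the nil-radical description of $J(R[x;\alpha])$ and the fact that $Nil_*(R[x;\alpha]) = Nil_*(R)[x;\alpha]$ for $2$-primal $\alpha$-compatible $R$, one extracts $b_i \in Nil_*(R) \subseteq J(R[x;\alpha])$ for each $i \geq 1$.

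The main obstacle, and the step I would spend the most care on, is establishing and correctly citing the precise description of $U(R[x;\alpha])$ and $J(R[x;\alpha])$ for a $2$-primal $\alpha$-compatible ring $R$ — namely that $f = \sum a_i x^i$ is a unit iff $a_0 \in U(R)$ and $a_1,\dots,a_n \in Nil_*(R)$, and that $J(R[x;\alpha]) = N[x;\alpha]$ for a suitable nil ideal $N$ (here $N = Nil_*(R)$). This is the engine of both inclusions, and it relies on the compatibility hypothesis to guarantee that $Nil_*(R)$ generates a nil (hence Jacobson-radical) ideal in the skew polynomial ring and that no new units appear in higher degrees. Once that description is in hand, both containments reduce to the bookkeeping sketched above together with the elementary facts about $\Delta$ recalled in the introduction (that $\Delta(R)$ is a subring closed under unit-multiplication, that $J \subseteq \Delta$, and that $2 \in \Delta(R)$-type manipulations are not even needed here).
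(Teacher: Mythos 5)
Your overall plan is sound and it is a genuinely different organization from the paper's. The paper first treats the reduced case, where \([15,\ \text{Corollary } 2.12]\) gives \(U(R[x;\alpha])=U(R)\) and hence \(\Delta(R[x;\alpha])=\Delta(R)\) outright, and then handles the general \(2\)-primal case by passing to \(R/Nil_*(R)\), using \(J(R[x;\alpha])=Nil_*(R)[x;\alpha]\) and the quotient formula \(\Delta(S/J(S))=\Delta(S)/J(S)\) from \([2,\ \text{Proposition } 6]\) to lift the equality back. You instead work directly in \(R[x;\alpha]\) with the coefficient-wise unit criterion (a unit has constant term in \(U(R)\) and higher coefficients in \(Nil(R)=Nil_*(R)\); this is exactly \([15,\ \text{Corollary } 2.14]\), which the paper itself invokes in Theorem \ref{3.29}, and its converse follows from \(Nil_*(R)[x;\alpha]=Nil_*(R[x;\alpha])\subseteq J(R[x;\alpha])\)). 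Both proofs rest on the same two structural facts about compatible skew polynomial rings; yours avoids the reduction modulo \(Nil_*(R)\) and the \(\Delta\)-of-a-quotient formula, at the cost of carrying the coefficient bookkeeping yourself, while the paper's reduction makes the key case (reduced \(R\)) essentially trivial.

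One step of your \(\subseteq\) direction is stated too vaguely to stand as written: after showing \(b_0\in\Delta(R)\) you say that from \(\Delta(R[x;\alpha])\subseteq J(T)\) (with \(T\) the subring generated by units) ``one extracts'' \(b_i\in Nil_*(R)\) for \(i\geq 1\); the containment in \(J(T)\) does not by itself say anything about nilpotence of the higher coefficients, so this detour proves nothing. The repair is immediate with the tool you already set up: since \(g\in\Delta(R[x;\alpha])\), the element \(g+1\) is a unit of \(R[x;\alpha]\), so by the unit criterion its coefficients in degrees \(\geq 1\), which are exactly the \(b_i\) for \(i\geq 1\), lie in \(Nil(R)=Nil_*(R)\); hence \(g-b_0\in Nil_*(R)[x;\alpha]=J(R[x;\alpha])\). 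With that substitution (and the citations you flag for the unit description and for \(J(R[x;\alpha])=Nil_*(R)[x;\alpha]\)), your argument is complete and correct.
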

\begin{proof}
Suppose first that $R$ is a reduced ring. As $R$ is \( \alpha \)-compatible, by \cite[Corollary 2.12]{15} we have $U(R[x; \alpha])=U(R)$. Also, it is easy to see that, $\Delta(R)\subseteq \Delta(R[x; \alpha])$. Let $r+r_0\in \Delta(R[x; \alpha])$, where $r\in R[x; \alpha]x$ and $r_0\in R$. Then, for any $u\in U(R)$, $r+r_0+u\in U(R)$. This shows that $r=0$ and $r_0+u\in U(R)$. Thus, we conclude that $\Delta(R[x; \alpha])\subseteq \Delta(R)$ and hence $\Delta(R[x; \alpha])=\Delta(R)$. Now assume that $R$ is $2$-primal. Clearly $Nil_{*}(R[x; \alpha])=Nil_{*}(R)[x; \alpha]\subseteq J(R[x; \alpha])$ by \cite[Lemma 2.2]{15}. As $R$ is $2$-primal, $R/Nil_{*}(R)$ is reduced, so we have \(J(R[x; \alpha]) = \operatorname{Nil}_*(R[x; \alpha]) = \operatorname{Nil}_*(R)[x; \alpha]\). By the first part of the proof applied to $R/Nil_{*}(R)$ and using \cite [Proposition 6(3)]{2}, we have: 
	\[
\Delta(R)+Nil_{*}(R)[x; \alpha]=\Delta(\dfrac {R}{Nil_{*}(R)}[x; \alpha])=\Delta(\dfrac {R[x; \alpha]}{J(R[x; \alpha])})=\dfrac {\Delta(R[x; \alpha])}{J(R[x; \alpha])}
.\]
By the above we conclude  the desired equality.
\end{proof}

For any two elements $ a,b \in R$, $1-ab$ is an unit if, and only if, $1-ba$ is an unit. This result is known as Jacobson’s lemma for units. There are several analogous results in the literature. We have the following.

\begin{corollary}\label{2.11}
Let \( R \) be a $\Delta$U ring and let \( a, b \in R \). Then, \( 1-ab \in \Delta(R) \) if, and only if, \( 1-ba \in \Delta(R) \).
\end{corollary}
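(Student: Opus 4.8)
The plan is to deduce this from the standard Jacobson-style lemma for the radical $\Delta(R)$ together with the $\Delta U$ hypothesis, which lets us replace the condition ``$1-x\in\Delta(R)$'' by the more tractable condition ``$x\in U(R)$''. Indeed, since $R$ is $\Delta U$, we have $U(R)=1+\Delta(R)$, so for any $y\in R$ the statement $1-y\in\Delta(R)$ is equivalent to $1-(1-y)=y\in U(R)$. Applying this with $y=ab$ and with $y=ba$, the claim ``$1-ab\in\Delta(R)\iff 1-ba\in\Delta(R)$'' becomes simply ``$ab\in U(R)\iff ba\in U(R)$''.

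So the first step is to make that reduction explicit. The second step is to verify the resulting equivalence $ab\in U(R)\iff ba\in U(R)$. This is an immediate consequence of the classical Jacobson lemma for units recalled just before the statement: $1-ab\in U(R)\iff 1-ba\in U(R)$, but I need the multiplicative rather than the ``$1-\,$'' version. One clean way: if $ab\in U(R)$ with inverse $c$, then $b\cdot(ca)\cdot b\cdot(ca)\cdots$ — more directly, set $v=(ab)^{-1}$; then $ba$ has a two-sided inverse given by $a v b - (1 - \text{something})$ — actually the cleanest route is to note that $ab$ is a unit iff both $a$ and $b$ are units when $R$... no, that is false in general. The correct and standard fact is: $ab$ invertible does \emph{not} imply $ba$ invertible in an arbitrary ring. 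Hence I should \emph{not} try to prove the multiplicative version; instead I must route through the ``$1-\,$'' form, which is the form Jacobson's lemma actually gives.

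Therefore the honest plan is: use $\Delta U$ only through $U(R)=1+\Delta(R)$ to rewrite ``$1-ab\in\Delta(R)$'' as ``$ab\in U(R)$'', then observe $ab\in U(R)\iff 1-(1-ab)\in U(R)$, apply Jacobson's lemma for units to the pair $(1-a,b)$ or directly to $(a,b)$ in the form ``$1-ab\in U(R)\iff 1-ba\in U(R)$'' after the substitution $a\mapsto -a$ (note $1-(-a)b=1+ab$, so one works with $-ab$ and $-ba$, and $ab\in U(R)\iff -ab\in U(R)\iff 1-(1-(-ab))=\ldots$) — in short, combine the identities $ab\in U(R)\iff -ab\in U(R)$, $x\in U(R)\iff 1-(1-x)\in U(R)$, and Jacobson's lemma to transfer between $-ab$ and $-ba$, hence between $ab$ and $ba$. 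Then reverse the $\Delta U$ rewriting on the other side to land at $1-ba\in\Delta(R)$.

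The main obstacle is purely bookkeeping: making sure the chain of ``$1-\,$'' substitutions is applied consistently so that Jacobson's lemma (which is a statement about $1-ab$ versus $1-ba$) is invoked with the right pair of elements, and being careful that the $\Delta U$ hypothesis is genuinely needed and used — it is what converts membership in $\Delta(R)$, which has no \emph{a priori} Jacobson-lemma symmetry of its own in a general ring, into membership in $U(R)$, where such symmetry is classical. No deep idea is required beyond recognizing this translation; the proof will be three or four short lines.
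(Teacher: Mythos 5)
Your opening reduction is fine: in a $\Delta U$ ring one indeed has $1-y\in\Delta(R)\iff y\in U(R)$ (the forward direction holds in any ring, since $1-y\in\Delta(R)$ gives $(1-y)+(-1)=-y\in U(R)$; the backward direction uses $U(R)=1+\Delta(R)$ together with closure of $\Delta(R)$ under multiplication by the unit $-1$). So the corollary does reduce to showing $ab\in U(R)\iff ba\in U(R)$. The gap is in how you propose to prove that equivalence. Jacobson's lemma is a statement about $1-xy$ versus $1-yx$, and none of your substitutions changes that: the identity $x\in U(R)\iff 1-(1-x)\in U(R)$ is a tautology, and replacing $a$ by $-a$ only relates $1+ab$ to $1+ba$ being units, never $ab$ to $ba$ themselves. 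There is no way to close this chain, and in fact there cannot be: as you yourself observe, ``$ab\in U(R)\Rightarrow ba\in U(R)$'' is false in a general ring (take $ab=1$, $ba\neq 1$; then $ba$ is a proper idempotent, hence not a unit). So the statement you reduced to is not a formal consequence of Jacobson's lemma; it is exactly the point where a special property of $\Delta U$ rings must enter, and your sketch never supplies one --- the proof trails off precisely at the transfer from $ab$ to $ba$.

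The missing ingredient is the fact, cited in the paper as \cite[Proposition 2.4]{7} (and closely related to Dedekind-finiteness of $\Delta U$ rings, compare Proposition \ref{3.28}), that in a $\Delta U$ ring $ab\in U(R)$ forces $a\in U(R)$ (and then $b=a^{-1}(ab)\in U(R)$). Once you have that, you can finish either along your lines, via $ba=a^{-1}(ab)a\in U(R)$ and then undoing the reduction, or as the paper does directly: from $1-ab\in\Delta(R)$ one gets $ab\in U(R)$, hence $a\in U(R)$, and then
\[
1-ba=a^{-1}(1-ab)a\in\Delta(R),
\]
because $\Delta(R)$ is closed under multiplication by units (\cite[Theorem 3]{2}). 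In short: your translation from $\Delta(R)$-membership to unit-membership matches the first step of the paper's proof, but the heart of the argument --- why the unit condition passes from $ab$ to $ba$ --- cannot be obtained from Jacobson's lemma and must instead invoke the $\Delta U$-specific result about factors of units and the unit-stability of $\Delta(R)$.
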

\begin{proof}
Assuming that \( 1-ab \in \Delta(R) \), we have \( ab \in U(R) \). Therefore, by \cite [Proposition 2.4]{7}, \( a \in U(R) \). Thus,
	\[
	1-ba = a^{-1}(1-ab)a \in \Delta(R),
	\]
because \(\Delta(R)\) is closed with respect to multiplication by all units (see \cite [Theorem 3]{2}). The converse is similar.
\end{proof}

\section{2-$\Delta$U rings}

In this section, we introduce 2-$\Delta$U rings and investigate its elementary properties.
We now give our main definition.

\begin{definition}
A ring $R$ is called 2-$\Delta$U if the square of each unit is a sum of an idempotent and an element from the $\Delta(R)$ (equivalently, for each $u\in U(R)$, $u^2=1+r$ where $r\in \Delta(R)$).	
\end{definition}

\begin{example}\label{3.31}
Clearly, 2-$UJ$ rings are 2-$\Delta$U. But, the converse is not true in general. For example, consider the ring \( R = F_2\langle x, y \rangle / \langle x^2 \rangle \). Then \( J(R) = 0 \), \( \Delta(R) = F_2 x + xRx \) and \( U(R) = 1 + F_2x + xRx \). Then \( R \) is $\Delta$U by \cite [Example 2.2]{7} and hence it is 2-$\Delta$U. But clearly \( R \) is not 2-$UJ$.
\end{example}

\begin{example}\label{3.32}
The ring $\mathbb{Z}_3$ is 2-$\Delta U$, but is not $\Delta U$.	
\end{example}

\begin{proposition}\label{3.1}
A direct product \(\prod_{i \in I} R_i\) of rings is 2-$\Delta$U if, and only if, each \(R_i\) is 2-$\Delta$U.
\end{proposition}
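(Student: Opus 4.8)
The plan is to reduce the statement about the direct product to the corresponding statements about units, idempotents, and the set $\Delta$ computed componentwise. The key structural facts I would invoke are: (a) $U\!\left(\prod_{i\in I}R_i\right) = \prod_{i\in I}U(R_i)$, i.e. a tuple is a unit precisely when each coordinate is a unit; and (b) the componentwise description of the $\Delta$ operator, namely $\Delta\!\left(\prod_{i\in I}R_i\right) = \prod_{i\in I}\Delta(R_i)$. Fact (b) follows from the very definition $\Delta(R) = \{r\in R \mid r + U(R)\subseteq U(R)\}$ together with fact (a): a tuple $(r_i)$ satisfies $(r_i) + U\!\left(\prod R_i\right)\subseteq U\!\left(\prod R_i\right)$ if and only if $r_i + U(R_i)\subseteq U(R_i)$ for every $i$. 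If the paper prefers, one may instead cite \cite{2} for this componentwise behaviour of $\Delta$; either way it is routine and I would state it in one line.

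With these two facts in hand the argument is a direct coordinatewise verification. For the ``only if'' direction, suppose $R := \prod_{i\in I}R_i$ is $2$-$\Delta U$ and fix $j\in I$. Given $u\in U(R_j)$, form the unit $\tilde u\in U(R)$ whose $j$-th coordinate is $u$ and whose other coordinates are $1$; then $\tilde u^2 = 1 + r$ for some $r = (r_i)\in\Delta(R) = \prod_i\Delta(R_i)$, and reading off the $j$-th coordinate gives $u^2 = 1 + r_j$ with $r_j\in\Delta(R_j)$, so $R_j$ is $2$-$\Delta U$. For the ``if'' direction, suppose each $R_i$ is $2$-$\Delta U$ and take $u = (u_i)\in U(R) = \prod_i U(R_i)$. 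For each $i$ write $u_i^2 = 1 + r_i$ with $r_i\in\Delta(R_i)$; then $u^2 = 1 + (r_i)$ and $(r_i)\in\prod_i\Delta(R_i) = \Delta(R)$, whence $R$ is $2$-$\Delta U$.

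There is no real obstacle here; the only point that deserves a sentence of care is the justification of $\Delta\!\left(\prod R_i\right) = \prod\Delta(R_i)$, since this is the step where the structure of $\Delta$ (as opposed to, say, the Jacobson radical, whose componentwise behaviour is standard) actually enters. I would either give the two-line argument above from the definition of $\Delta$, or point to the relevant statement in \cite{2}. Everything else is the evaluation of polynomial identities coordinate by coordinate, which I would not write out in full.
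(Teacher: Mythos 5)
Your proposal is correct and follows essentially the same route as the paper, which likewise proves the result by noting $\Delta\bigl(\prod_{i\in I} R_i\bigr)=\prod_{i\in I}\Delta(R_i)$ and $U\bigl(\prod_{i\in I} R_i\bigr)=\prod_{i\in I}U(R_i)$ and concluding coordinatewise. Your extra sentence justifying the componentwise description of $\Delta$ from its definition is a harmless elaboration of what the paper leaves implicit.
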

\begin{proof}
As \(\Delta(\prod_{i \in I} R_i) = \prod_{i \in I} \Delta(R_i)\) and \(U(\prod_{i \in I} R_i) = \prod_{i \in I} U(R_i)\), the result follows.
\end{proof}

\begin{proposition}\label{3.2}
Let \(R\) be a 2-$\Delta$U ring. If \(T\) is a factor ring of \(R\) such that units of \(T\) lift to units of \(R\), then \(T\) is 2-$\Delta$U.
\end{proposition}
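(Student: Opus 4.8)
The plan is to pull the problem back to $R$ through the canonical surjection, using the lifting hypothesis at two points. Write $T=R/I$ for an ideal $I$ of $R$ and let $\pi\colon R\to T$ be the quotient map. Given an arbitrary $\bar u\in U(T)$, the first step is to lift it: by hypothesis there is $u\in U(R)$ with $\pi(u)=\bar u$. Since $R$ is $2$-$\Delta$U, the definition supplies $r\in\Delta(R)$ with $u^{2}=1+r$; applying $\pi$ yields $\bar u^{2}=\pi(u^{2})=\bar 1+\pi(r)$. So the whole statement reduces to checking that $\pi(r)\in\Delta(T)$.

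The key observation --- and the place where the lifting hypothesis is genuinely needed --- is that $\pi\bigl(\Delta(R)\bigr)\subseteq\Delta(T)$ under this hypothesis. To see it, take $s\in\Delta(R)$ and any $\bar v\in U(T)$, and lift $\bar v$ to some $v\in U(R)$; then $s+v\in U(R)$ because $s\in\Delta(R)$, hence $\pi(s)+\bar v=\pi(s+v)\in U(T)$ since a ring homomorphism carries units to units. As $\bar v$ ranges over $U(T)$ this says $\pi(s)+U(T)\subseteq U(T)$, i.e. $\pi(s)\in\Delta(T)$ straight from the definition of $\Delta$. Taking $s=r$ completes the argument: every $\bar u\in U(T)$ satisfies $\bar u^{2}\in\bar 1+\Delta(T)$, so $T$ is $2$-$\Delta$U.

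I do not expect a real obstacle in the argument; the one point worth flagging is that, in contrast to $J(R)$, one cannot expect $\pi(\Delta(R))\subseteq\Delta(T)$ for an arbitrary epimorphism $\pi$ --- passing to a quotient may create new units, against which elements of $\Delta(R)$ need no longer be stable --- so the hypothesis that units of $T$ lift to units of $R$ is exactly what is required, both to choose $u$ over $\bar u$ and to push $\Delta(R)$ into $\Delta(T)$. (Alternatively one could invoke \cite[Proposition~6]{2} for the behaviour of $\Delta$ under epimorphisms together with the lifting hypothesis, but the short direct argument above is self-contained.)
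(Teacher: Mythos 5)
Your proof is correct and follows essentially the same route as the paper: lift $\bar u\in U(T)$ to $u\in U(R)$, write $u^{2}=1+r$ with $r\in\Delta(R)$, and push down through the quotient map. The only difference is that you explicitly verify $\pi(\Delta(R))\subseteq\Delta(T)$ using the lifting hypothesis, a step the paper's proof leaves implicit, so your write-up is if anything slightly more complete.
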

\begin{proof}
Suppose that \(f: R \rightarrow T\) is a ring epimorphism. Let \(v \in U(T)\). Then there exists \(u \in U(R)\) such that \(v = f(u)\) and \(u^2 = 1 + r \in 1 + \Delta(R)\). Thus, we have, \(
v^2 = (f(u))^2 = f(u^2) = f(1 + r) = f(1) + f(r) = 1 + f(r) \in 1 + \Delta(T)\)
\end{proof}

\begin{example}\label{3.3}
A division ring \(R\) is 2-$\Delta$U if, and only if, \(R \cong \mathbb{Z}_2\) or \(\mathbb{Z}_3\).
\end{example}
\begin{proof}
Since \(R\) is a division ring, \(\Delta(R) = 0\), the result follows from \cite [Example 2.1]{1}.
\end{proof}

\begin{remark}
The condition "units of $T$ lift to units of $R$" in Proposition \ref{3.2} is necessary. The ring  $\mathbb{Z}_7$ is a factor ring of the 2-$\Delta U$ ring $\mathbb{Z}$. But, $\mathbb{Z}_7$ is not 2-$\Delta U$ by Example \ref{3.3}. Not that, not all of units of $\mathbb{Z}_7$ can lift to units of $\mathbb{Z}$. 
\end{remark}

\begin{proposition}\label{3.4}
Let \(R\) be a 2-$\Delta$U ring. For a unital subring \(S\) of \(R\), if \(S \cap \Delta(R) \subseteq \Delta(S)\), then \(S\) is a 2-$\Delta$U ring. In particular, the center of \(R\) is a 2-$\Delta$U ring.
\end{proposition}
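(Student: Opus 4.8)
The plan is to argue straight from the definition of $2$-$\Delta U$. First take an arbitrary $u \in U(S)$. Since $S$ is a \emph{unital} subring of $R$, the inverse of $u$ computed in $S$ is also an inverse of $u$ in $R$, so $u \in U(R)$. Because $R$ is $2$-$\Delta U$, we may write $u^2 = 1 + r$ with $r \in \Delta(R)$. The key observation is that $r = u^2 - 1$ actually lies in $S$, since $u \in S$ and $1 \in S$; hence $r \in S \cap \Delta(R) \subseteq \Delta(S)$ by hypothesis. Therefore $u^2 = 1 + r \in 1 + \Delta(S)$, which is precisely the condition for $S$ to be $2$-$\Delta U$.

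For the ``in particular'' clause I would apply the first part with $S = C(R)$, the center of $R$, which is plainly a unital subring. It then suffices to check that the hypothesis $C(R) \cap \Delta(R) \subseteq \Delta(C(R))$ holds. So let $r \in C(R) \cap \Delta(R)$ and let $v \in U(C(R))$; I must show $r + v \in U(C(R))$. Since $v \in U(R)$ and $r \in \Delta(R)$, we get $r + v \in U(R)$. Moreover $r + v$ is central, being a sum of central elements, and the inverse of a central unit of any ring is again central: if $ax = xa$ for all $x \in R$ and $a \in U(R)$, then multiplying by $a^{-1}$ on both sides gives $a^{-1}x = xa^{-1}$ for all $x$. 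Hence $(r+v)^{-1} \in C(R)$, so $r + v \in U(C(R))$, which shows $r \in \Delta(C(R))$ and establishes the required inclusion. Invoking the first part with $S = C(R)$ then finishes the proof.

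I do not expect a genuine obstacle here. The only point that is not purely formal bookkeeping is the remark that inverses of central units are central, which is exactly what forces the center to satisfy $C(R) \cap \Delta(R) \subseteq \Delta(C(R))$ and thus lets us reduce the second statement to the first. Note also that one only needs the single inclusion $S \cap \Delta(R) \subseteq \Delta(S)$; the reverse inclusion (and hence equality $\Delta(S) = S \cap \Delta(R)$) may fail for general unital subrings and is not needed.
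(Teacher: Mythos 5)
Your proof is correct and follows essentially the same route as the paper: take $u\in U(S)\subseteq U(R)$ and observe $u^2-1\in S\cap\Delta(R)\subseteq\Delta(S)$. The only difference is in the ``in particular'' clause, where the paper simply cites Corollary 8 of Leroy--Matczuk for the center, while you verify the needed inclusion $C(R)\cap\Delta(R)\subseteq\Delta(C(R))$ directly via the observation that the inverse of a central unit is central --- a harmless and self-contained substitute for the citation.
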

\begin{proof}
Let \(v \in U(S)\) \(\subseteq U(R)\). Since \(R\) is 2-$\Delta$U, we have,
\(
v^2-1 \in \Delta(R) \cap S \subseteq \Delta(S)
\). So \(S\) is a 2-$\Delta$U ring. The rest follows from \cite [Corollary 8]{2}.
\end{proof}

\begin{proposition}\label{3.27}
Let \( R \) be a 2-$\Delta$U ring and \( 2 \in \Delta(R) \). Then:
\begin{enumerate}
\item 
\( (U(R))^2 + (U(R))^2 \subseteq \Delta(R) \).
\item 
\( [(U(R))^2 + (U(R))^2] \cap \operatorname{Id}(R) = 0 \).
\end{enumerate}
\end{proposition}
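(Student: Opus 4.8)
The plan is to mimic the proof of Proposition \ref{2.1}, replacing "unit" by "square of a unit" throughout and using the hypothesis $2 \in \Delta(R)$ to supply the term that was automatic in the $\Delta U$ setting.

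For part (i), take $x \in (U(R))^2 + (U(R))^2$, so $x = u_1^2 + u_2^2$ with $u_1, u_2 \in U(R)$. Since $R$ is 2-$\Delta U$, we have $u_1^2 = 1 + r_1$ and $u_2^2 = 1 + r_2$ for some $r_1, r_2 \in \Delta(R)$. Hence $x = 2 + (r_1 + r_2)$. Now invoke the hypothesis $2 \in \Delta(R)$, together with the fact (from \cite{2}) that $\Delta(R)$ is a subring of $R$, which is closed under addition; therefore $x = 2 + r_1 + r_2 \in \Delta(R)$. This gives the inclusion $(U(R))^2 + (U(R))^2 \subseteq \Delta(R)$.

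For part (ii), suppose $e \in [(U(R))^2 + (U(R))^2] \cap \operatorname{Id}(R)$, so $e = e^2$ and $e = u_1^2 + u_2^2 \in \Delta(R)$ by part (i). But $\Delta(R)$ is a Jacobson radical ring, so it contains no nonzero idempotents; indeed, $e \in \Delta(R) \subseteq J(T)$ where $T$ is the subring generated by the units, forces $e = 0$ (an idempotent in the Jacobson radical of any ring is zero). Alternatively one can cite \cite[Proposition 15]{2} exactly as in Proposition \ref{2.1}(ii)–(iii). This closes the argument.

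The only subtle point — and it is minor — is making sure that $\Delta(R)$ really is closed under the addition $2 + r_1 + r_2$; this is exactly the content of $\Delta(R)$ being a subring (Leroy–Matczuk \cite{2}), combined with the standing hypothesis $2 \in \Delta(R)$, so there is no genuine obstacle here. Unlike Proposition \ref{2.1}, where $2 \in \Delta(R)$ held automatically in every $\Delta U$ ring by \cite[Proposition 2.4]{7}, in the 2-$\Delta U$ setting this must be assumed, which is why it appears as a hypothesis; everything else is a direct transcription.
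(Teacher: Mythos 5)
Your proof is correct and follows essentially the same route as the paper: part (i) is the identical computation $u_1^2+u_2^2 = 2+(r_1+r_2)\in\Delta(R)$ using $2\in\Delta(R)$ and the subring property, and part (ii) is the same appeal to the fact that $\Delta(R)$, being a Jacobson radical (sub)ring, contains no nonzero idempotents, which the paper cites as \cite[Proposition 15]{2}. Your extra remark contrasting this with the automatic $2\in\Delta(R)$ in the $\Delta U$ case is accurate but not needed for the argument.
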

\begin{proof}
\begin{enumerate}
\item 
Let $t\in (U(R))^2 + (U(R))^2$, so $t=u^2+v^2$ where $u,v \in U(R)$. Since $R$ is 2-$\Delta$U, $t=1+r+1+s$ where $r,s\in \Delta(R)$. So, we have $t=2+(r+s)$. Since, $2 \in \Delta(R)$ and $\Delta(R)$ is a subring of $R$, $t\in \Delta(R)$.
\item 
It is clear by (i) and \cite [Proposition 15]{2}.
\end{enumerate}
\end{proof}

\begin{theorem}\label{3.5}
Let \(I \subseteq J(R)\) be an ideal of a ring \(R\). Then \(R\) is 2-$\Delta$U if, and only if,  is so \(R/I\).
\end{theorem}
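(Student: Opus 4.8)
The plan is to exploit the standard fact that when $I\subseteq J(R)$, the natural projection $\pi\colon R\to R/I$ induces a bijection on units (units of $R/I$ lift to units of $R$, since $1+I\subseteq U(R)$ and more generally $u+I\subseteq U(R)$ whenever $u\in U(R)$), together with the behaviour of $\Delta$ under such quotients. First I would record the easy direction: if $R$ is 2-$\Delta$U, then since units of $R/I$ lift to units of $R$ (because $I\subseteq J(R)$), Proposition \ref{3.2} applies verbatim and gives that $R/I$ is 2-$\Delta$U. So the content is the converse.

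For the converse, suppose $R/I$ is 2-$\Delta$U and take $u\in U(R)$. Then $\bar u=\pi(u)\in U(R/I)$, so $\bar u^2-\bar 1\in\Delta(R/I)$. The key step is to transfer this back: I would use the identity $\Delta(R/I)=\Delta(R)/I$ when $I\subseteq J(R)$. This follows from \cite[Proposition 6(3)]{2} (the same citation already used in the proof of Proposition \ref{2.10}), which describes $\Delta$ of a quotient by a Jacobson-radical ideal; indeed $\Delta(R)$ contains $J(R)\supseteq I$, so the quotient $\Delta(R)/I$ makes sense and one checks it equals $\Delta(R/I)$. Granting this, $\bar u^2-\bar 1\in\Delta(R)/I$ means $u^2-1\in\Delta(R)+I=\Delta(R)$, the last equality because $I\subseteq J(R)\subseteq\Delta(R)$. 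Hence $u^2\in 1+\Delta(R)$, and $R$ is 2-$\Delta$U.

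The main obstacle—such as it is—is the identity $\Delta(R/I)=\Delta(R)/I$ for $I\subseteq J(R)$; everything else is bookkeeping. This should be handled either by direct citation of \cite[Proposition 6]{2} or by a short argument: the inclusion $\pi(\Delta(R))\subseteq\Delta(R/I)$ is immediate since $\pi$ carries units to units and $r+U(R)\subseteq U(R)$ projects to $\bar r+U(R/I)\subseteq U(R/I)$; conversely, if $\bar r\in\Delta(R/I)$ pick a lift $r\in R$, and for any $u\in U(R)$ we have $\overline{r+u}=\bar r+\bar u\in U(R/I)$, which forces $r+u\in U(R)$ because units lift along $\pi$ (as $I\subseteq J(R)$), so $r\in\Delta(R)$. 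I would present this short verification inline rather than leaning solely on the citation, since it keeps the argument self-contained and makes transparent exactly where $I\subseteq J(R)$ is used.
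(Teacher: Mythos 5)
Your proposal is correct and follows essentially the same route as the paper: the forward direction via lifting units of $R/I$ to units of $R$ (the paper does this directly rather than invoking Proposition \ref{3.2}), and the converse via the identity $\Delta(R/I)=\Delta(R)/I$ for $I\subseteq J(R)$ (cited from \cite[Proposition 6]{2}) together with $I\subseteq J(R)\subseteq\Delta(R)$. Your optional inline verification of $\Delta(R/I)=\Delta(R)/I$ is a harmless extra that the paper handles by citation only.
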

\begin{proof}
Let \(R\) be a 2-$\Delta$U ring and \(u + I \in U(R/I)\). Then \(u \in U(R)\) and hence \(u^2 = 1 + r\) where \(r \in \Delta(R)\). Thus \((u + I)^2 = u^2 + I = (1+I)+(r+I)\), where \(r + I \in \Delta(R)/I = \Delta(R/I)\) by \cite [Proposition 6]{2}. Conversely, let \(R/I\) be a 2-$\Delta$U ring and \(u \in U(R)\). Then \(u + I \in U(R/I)\) and hence \((u + I)^2 = (1 + I) + (r + I)\), where \(r + I \in \Delta(R/I)\). Thus \(u^2 + I = (1 + r) + I\). So \(u^2 - (1 + r) \in I \subseteq J(R) \subseteq \Delta(R)\). Therefore \(u^2 = 1 + r^\prime\), where \(r^\prime \in \Delta(R)\). Hence \(R\) is a 2-$\Delta$U ring.
\end{proof}

\begin{corollary}\label{3.6}
A ring \(R\) is 2-$\Delta$U if, and only if, \(R/J(R)\) is 2-$\Delta$U.
\end{corollary}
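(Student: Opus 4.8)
The plan is to derive Corollary \ref{3.6} as an immediate special case of Theorem \ref{3.5}. The key observation is that $J(R)$ is itself an ideal of $R$ contained in $J(R)$, so the hypothesis of Theorem \ref{3.5} is satisfied with the choice $I = J(R)$. No further work is needed: the theorem already handles the passage between $R$ and $R/I$ in both directions, and specializing $I$ to the Jacobson radical yields exactly the asserted equivalence between $R$ being $2$-$\Delta U$ and $R/J(R)$ being $2$-$\Delta U$.

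Concretely, first I would invoke Theorem \ref{3.5} with $I = J(R)$, noting that trivially $J(R) \subseteq J(R)$ and $J(R)$ is an ideal, so the hypotheses are met. Then the conclusion of that theorem states precisely that $R$ is $2$-$\Delta U$ if and only if $R/J(R)$ is $2$-$\Delta U$, which is the statement of the corollary. There is essentially nothing to prove beyond this citation.

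Since the argument is a one-line specialization, there is no real obstacle; the only thing to be careful about is confirming that Theorem \ref{3.5} was stated for an arbitrary ideal $I$ with $I \subseteq J(R)$ (rather than, say, only for nilpotent or nil ideals), which it is. Thus the proof reads simply: \emph{This follows immediately from Theorem \ref{3.5} by taking $I = J(R)$.}
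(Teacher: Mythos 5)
Your proposal is correct and is exactly the paper's (implicit) argument: the corollary is stated as an immediate consequence of Theorem \ref{3.5} with the choice $I = J(R)$, which is an ideal contained in $J(R)$. Nothing further is needed.
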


\begin{proposition}\label{3.7}
Let \(R\) be a 2-$\Delta$U ring and \(e\) be an idempotent of \(R\). Then \(eRe\) is a 2-$\Delta$U ring.
\end{proposition}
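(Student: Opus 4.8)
The plan is to show that every unit of the corner ring $eRe$ squares into $1_{eRe}+\Delta(eRe)$, where $1_{eRe}=e$. The natural strategy is to lift a unit of $eRe$ to a unit of $R$ by the classical trick of pairing $e$ with its complement $1-e$. First I would take $v\in U(eRe)$ with inverse $w\in eRe$ (so $vw=wv=e$) and form the element $u=v+(1-e)\in R$. A routine check shows $u$ is a unit of $R$ with inverse $w+(1-e)$, since the two blocks multiply independently. Because $R$ is 2-$\Delta$U, we get $u^2=1+r$ for some $r\in\Delta(R)$.

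Next I would compute $u^2$ blockwise: since $v$ and $1-e$ are orthogonal ($v(1-e)=0=(1-e)v$ as $v\in eRe$), we have $u^2=v^2+(1-e)$. Comparing with $u^2=1+r$ gives $v^2+(1-e)=1+r$, hence $v^2=e+r$. It remains to see that $r$ actually lies in $\Delta(eRe)$. From $v^2=e+r$ and $v^2\in eRe$ we get $r=v^2-e\in eRe$, so $r\in eRe\cap\Delta(R)$. The key fact I would invoke is that $eRe\cap\Delta(R)\subseteq\Delta(eRe)$; this is exactly the corner behaviour of $\Delta$ recorded in \cite[Corollary 8]{2} (the same fact already used in the proof of Proposition \ref{3.4} and, in the $\Delta U$ setting, in Proposition \ref{2.3}). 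Therefore $v^2=e+r$ with $r\in\Delta(eRe)$, which says precisely that $eRe$ is 2-$\Delta$U.

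The only genuinely delicate point is the passage $eRe\cap\Delta(R)\subseteq\Delta(eRe)$; everything else is the standard block-matrix bookkeeping for the Peirce corner. Since that inclusion is available from \cite{2}, I expect the write-up to be short. One should also note the degenerate cases $e=0$ (trivial ring, vacuously 2-$\Delta$U) and $e=1$ (nothing to prove) so the lifting $u=v+(1-e)$ makes sense in all situations.
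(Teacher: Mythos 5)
Your lifting argument is exactly the paper's: form $u=v+(1-e)\in U(R)$, square blockwise to get $v^2-e\in\Delta(R)\cap eRe$, and then transfer this element into $\Delta(eRe)$. The gap is in how you justify the transfer. The inclusion $eRe\cap\Delta(R)\subseteq\Delta(eRe)$ is \emph{not} what \cite[Corollary 8]{2} gives you: that result (as it is used in Proposition \ref{3.4}) concerns unital subrings $S\subseteq R$ sharing the identity $1_R$, such as the center, where units of $S$ are units of $R$. The corner $eRe$ has identity $e\neq 1_R$ (when $e\neq 1$) and contains no unit of $R$ at all, so neither that corollary nor Proposition \ref{3.4} applies; likewise Proposition \ref{2.3} rests on \cite[Proposition 2.6]{7} (corners of $\Delta U$ rings are $\Delta U$), which is a different statement and not the inclusion you need. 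So the ``only genuinely delicate point'' you flag is, as written, unsupported.

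The inclusion is nevertheless true, and the paper proves it by a short direct computation which you should supply: let $d\in\Delta(R)\cap eRe$ and let $w\in U(eRe)$ be arbitrary. Then $w+(1-e)\in U(R)$, so $t:=d+w+(1-e)\in U(R)$ by the definition of $\Delta(R)$. Since $d,w\in eRe$, one checks $et=te=ete=d+w$, and because $e$ commutes with $t$ (hence with $t^{-1}$), the element $ete$ is a unit of $eRe$ with inverse $et^{-1}e$. Thus $d+w\in U(eRe)$ for every $w\in U(eRe)$, i.e. $d\in\Delta(eRe)$. Applying this to $d=v^2-e$ closes the argument, and with that insertion your proof coincides with the paper's.
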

\begin{proof}
Let \(u \in U(eRe)\). Then \(u + (1-e) \in U(R)\). By the hypothesis, \((u + (1-e))^2 = u^2 + (1 - e) = 1 + r \in 1 + \Delta(R)\). Then we have \(u^2 - e \in \Delta(R)\). Now we show that \(u^2 - e \in \Delta(eRe)\). Let \(v\) be an arbitrary unit of \(eRe\). Clearly, \(v + 1 - e \in U(R)\). Note that \(u^2 - e \in \Delta(R)\) gives us that \(u^2 - e + v + 1 - e \in U(R)\), by the definition of \(\Delta(R)\). Take \(u^2 - e + v + 1 - e = t \in U(R)\).
One can check that \( e t = t e = ete = u^2 - e + v \), and so \( e t e \in U(eRe) \). It shows that \( u^2 - e + U(eRe) \subseteq U(eRe) \). Then we have \( u^2 - e \in \Delta(eRe) \) and hence \( u^2\in e + \Delta(eRe) \), which implies \( eRe \) is a 2-$\Delta$U ring.
\end{proof}

\begin{proposition}\label{3.8}
For any ring \( R \neq 0 \) and any integer \( n \geq 2 \), the ring \( M_n(R) \) is not a 2-$\Delta$U ring.
\end{proposition}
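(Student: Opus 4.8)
The plan is to reduce the statement to the case $n=2$ via a corner ring and then exhibit a single explicit unit that already obstructs the $2$-$\Delta U$ property. Suppose, for contradiction, that $M_n(R)$ is $2$-$\Delta U$ for some $n\ge 2$ (with $R\ne 0$). Let $e\in M_n(R)$ be the idempotent with $1$ in the $(1,1)$ and $(2,2)$ entries and $0$ elsewhere. By Proposition~\ref{3.7}, the corner ring $e\,M_n(R)\,e$ is again $2$-$\Delta U$; since $e\,M_n(R)\,e\cong M_2(R)$, it suffices to derive a contradiction from the assumption that $M_2(R)$ is $2$-$\Delta U$. (One could also try to argue directly inside $M_n(R)$, but that would seem to need the identification $\Delta(M_n(R))=\mathrm{M}_n(J(R))$, which is not available in the present excerpt, so the corner reduction is the cleaner route.)

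For the $n=2$ case, I would take
\[
u=\begin{pmatrix} 0 & 1 \\ 1 & 1 \end{pmatrix}\in M_2(R).
\]
A one-line check gives $u^{-1}=\begin{pmatrix} -1 & 1 \\ 1 & 0 \end{pmatrix}$, so $u\in U(M_2(R))$, and $u^2=\begin{pmatrix} 1 & 1 \\ 1 & 2 \end{pmatrix}$, hence $u^2-I_2=u$. The point of this particular $u$ is that the identity $u^2-I_2=u$ is valid over an arbitrary unital ring, so no case distinction on whether $2\in U(R)$ or $2=0$ in $R$ is required. Since $M_2(R)$ is assumed $2$-$\Delta U$, we obtain $u=u^2-I_2\in\Delta(M_2(R))$.

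To close, recall that for every nonzero ring $S$ the set $\Delta(S)$ contains no unit of $S$: if $v\in\Delta(S)\cap U(S)$, then $-v\in U(S)$, and the defining property $v+U(S)\subseteq U(S)$ of $\Delta(S)$ would force $0=v+(-v)\in U(S)$, i.e.\ $S=0$. Applying this with $S=M_2(R)$, which is nonzero because $R\ne 0$, contradicts $u\in\Delta(M_2(R))\cap U(M_2(R))$. Therefore $M_2(R)$ is not $2$-$\Delta U$, and consequently neither is $M_n(R)$. I do not expect a genuine obstacle here; the only thing that needs care is selecting $u$ so the counterexample is uniform in $R$, which the relation $u^2=I_2+u$ takes care of.
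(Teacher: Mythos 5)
Your proof is correct and follows essentially the same route as the paper: the same corner-ring reduction to $M_2(R)$ via Proposition~\ref{3.7} and the very same unit $u=\begin{pmatrix} 0 & 1 \\ 1 & 1 \end{pmatrix}$ with $u^2-I_2=u$. The only (minor) difference is the final step: the paper invokes $\Delta(M_2(R))=J(M_2(R))$ and that the radical contains no units, whereas you argue directly from the definition that $\Delta(S)$ of a nonzero ring cannot contain a unit, which is a slightly more self-contained way to close the same argument.
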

\begin{proof}
Since it is well known that \( M_2(R) \) is isomorphic to a corner ring of \( M_n(R) \) (for \( n \geq 2 \)), it suffices to show that \( M_2(R) \) is not a 2-$\Delta$U ring by virtue of Proposition \ref{3.7}. To this target, consider the matrix 
\[
A = \begin{pmatrix} 0 & 1 \\ 1 & 1 \end{pmatrix} \in U(M_2(R)).
\]
Then, \(A^2 - I_2 = A \not \in J(M_2(R)) = \Delta(M_2(R))\), as required.
\end{proof}

A set $\{e_{ij} : 1 \le i, j \le n\}$ of nonzero elements of $R$ is said to be a system of $n^2$ matrix units if $e_{ij}e_{st} = \delta_{js}e_{it}$, where $\delta_{jj} = 1$ and $\delta_{js} = 0$ for $j \neq s$. In this case, $e := \sum_{i=1}^{n} e_{ii}$ is an idempotent of $R$ and $eRe \cong M_n(S)$, where $$S = \{r \in eRe : re_{ij} = e_{ij}r,~~\textrm{for all}~~ i, j = 1, 2, . . . , n\}.$$
Recall that, a ring $R$ is said to be a Dedekind-finite if $ab=1$ implies $ba=1$ for any $a,b\in R$. In other words, all one-sided inverses in the ring are two-sided.

\begin{proposition}\label{3.28}
Every 2-$\Delta$U ring is Dedekind-finite.
\end{proposition}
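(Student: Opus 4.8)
The plan is to argue by contradiction using the matrix-unit machinery recalled just above: if a $2$-$\Delta$U ring $R$ were not Dedekind-finite, I will exhibit inside $R$ a corner ring isomorphic to $M_2(S)$ with $S\neq 0$, which is impossible by Propositions \ref{3.7} and \ref{3.8}.

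Concretely, suppose $a,b\in R$ satisfy $ab=1$ but $ba\neq 1$, and set $e:=1-ba$; then $e^2=e$, $e\neq 0$, and $ab=1$ yields the two handy identities $ae=0$ and $eb=0$. Following the classical construction relating one-sided inverses to $2\times 2$ matrix units, I would put
\[
e_{11}:=e,\qquad e_{12}:=ea,\qquad e_{21}:=be,\qquad e_{22}:=bea,
\]
and verify the relations $e_{ij}e_{st}=\delta_{js}e_{it}$ defining a system of $2^2$ matrix units; each of the sixteen products reduces to a one-line computation from $e^2=e$, $ae=0$, $eb=0$ and $ab=1$ (for instance $e_{12}e_{21}=ea\cdot be=e(ab)e=e=e_{11}$, $e_{21}e_{12}=be\cdot ea=bea=e_{22}$, and $e_{11}e_{21}=e\cdot be=(eb)e=0$). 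One also checks that all four $e_{ij}$ are nonzero: $ea=0$ forces $e=eab=0$, $be=0$ forces $e=abe=0$, and $bea=0$ forces $e=(ab)e(ab)=a(bea)b=0$, in each case contradicting $e\neq 0$.

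With a system of $2^2$ matrix units in hand, the fact recalled before the statement gives that $f:=e_{11}+e_{22}$ is an idempotent with $fRf\cong M_2(S)$, where $S=\{r\in fRf : re_{ij}=e_{ij}r\ \text{for all}\ i,j\}$; since $0\neq e_{11}\in fRf$ we get $fRf\neq 0$, hence $S\neq 0$. Now Proposition \ref{3.7} says the corner ring $fRf$ of the $2$-$\Delta$U ring $R$ is again $2$-$\Delta$U, while Proposition \ref{3.8} says $M_2(S)$ is not $2$-$\Delta$U for $S\neq 0$ --- a contradiction. Hence $ba=1$, proving $R$ Dedekind-finite.

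The substantive point is the recognition that failure of Dedekind-finiteness embeds a $2\times 2$ matrix ring as a corner of $R$; once this is set up the conclusion is immediate from the two quoted propositions. The only step demanding attention is the routine but slightly lengthy check of the sixteen matrix-unit relations, which I would carry out explicitly rather than invoke as folklore; no genuine obstacle is expected beyond that bookkeeping.
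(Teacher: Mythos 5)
Your proof is correct and follows essentially the same route as the paper: from $ab=1\neq ba$ you build a system of matrix units, obtain a corner ring isomorphic to $M_2(S)$ with $S\neq 0$, and contradict Propositions \ref{3.7} and \ref{3.8}. The only difference is that you carry out the matrix-unit verification explicitly for $n=2$ (with the standard, correctly indexed formulas $e_{ij}=b^{\,i-1}(1-ba)a^{\,j-1}$), whereas the paper invokes the general construction without detail.
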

\begin{proof}
If we assume to the contrary that $R$ is not a Dedekind-finite ring, then there exist elements $a, b \in R$ such that $ab = 1$ but $ba \neq 1$. Assuming $e_{ij} = a^i(1-ba)b^j$ and $e =\sum_{i=1}^{n}e_{ii}$, there exists a nonzero ring $S$ such that $eRe \cong M_n(S)$. However, owing to Proposition \ref{3.7}, $eRe$ is a 2-$\Delta$U ring, so $M_n(S)$ must also be a 2-$\Delta$U ring, which contradicts Proposition \ref{3.8}.
\end{proof}

\begin{example}\label{3.10}
A local ring \( R \) is 2-$\Delta$U if, and only if, \( R/J(R) \cong \mathbb{Z}_2 \) or \( \mathbb{Z}_3 \).
\end{example}
\begin{proof}
Assume that \( R/J(R) \cong \mathbb{Z}_2 \) or \( \mathbb{Z}_3 \). We know that \(R/J(R)\) is a division ring, so \(R/J(R)\) is 2-$\Delta$U by Example \ref{3.3}. Thus, \(R\) is 2-$\Delta$U by Corollary \ref{3.6}. Conversely, let \(R\) be 2-$\Delta$U and hence \(R/J(R) \cong \mathbb{Z}_2\) or \(\mathbb{Z}_3\) by Example \ref{3.3}.
\end{proof}

\begin{corollary}\label{3.26}
(i)A semi-simple ring $R$ is 2-$\Delta$U if, and only if, $R \cong \bigoplus_{i=1}^n R_i$ where $R_i\cong \mathbb{Z}_2 \text{ or } \mathbb{Z}_3$ for every i.

(ii)A semi-local ring $R$ is 2-$\Delta$U if, and only if, $R/J(R) \cong \bigoplus_{i=1}^m R_i$ where $R_i \cong \mathbb{Z}_2 \text{ or } \mathbb{Z}_3$ for every i.
\end{corollary}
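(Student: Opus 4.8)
The plan is to reduce both parts to the Wedderburn--Artin structure theorem together with three facts already in hand: that a finite direct product is 2-$\Delta U$ exactly when each factor is (Proposition \ref{3.1}), that $M_n(R)$ is never 2-$\Delta U$ for $n \geq 2$ and $R \neq 0$ (Proposition \ref{3.8}), and that a division ring is 2-$\Delta U$ precisely when it is $\mathbb{Z}_2$ or $\mathbb{Z}_3$ (Example \ref{3.3}).

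For part (i), I would first write a semi-simple (Artinian) ring $R$ as $R \cong \prod_{i=1}^{k} M_{n_i}(D_i)$ with each $D_i$ a division ring, via Wedderburn--Artin. If $R$ is 2-$\Delta U$, then Proposition \ref{3.1} forces each $M_{n_i}(D_i)$ to be 2-$\Delta U$, whence Proposition \ref{3.8} gives $n_i = 1$ for all $i$, so that $R \cong \prod_{i=1}^{k} D_i$; applying Proposition \ref{3.1} once more and then Example \ref{3.3} yields $D_i \cong \mathbb{Z}_2$ or $D_i \cong \mathbb{Z}_3$. The converse is immediate: if each summand $R_i$ is $\mathbb{Z}_2$ or $\mathbb{Z}_3$ it is 2-$\Delta U$ by Example \ref{3.3}, and then $\bigoplus_{i=1}^n R_i$ is 2-$\Delta U$ by Proposition \ref{3.1}.

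For part (ii), recall that $R$ semi-local means $R/J(R)$ is semi-simple Artinian. By Corollary \ref{3.6}, $R$ is 2-$\Delta U$ if and only if $R/J(R)$ is 2-$\Delta U$; one then applies part (i) to the semi-simple ring $R/J(R)$ to get the stated decomposition, and conversely the decomposition makes $R/J(R)$ 2-$\Delta U$ by part (i) and hence $R$ 2-$\Delta U$ by Corollary \ref{3.6}.

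Since every step is a direct invocation of an earlier result, there is essentially no genuine obstacle; the only point requiring a word of care is that "semi-simple" must be read as "semi-simple Artinian" so that Wedderburn--Artin supplies finitely many matrix-ring factors, which is exactly the setting in which Proposition \ref{3.1} (and the finite direct-sum notation in the statement) applies cleanly.
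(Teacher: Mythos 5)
Your argument is correct and is exactly the derivation the paper intends (it states this corollary without proof, leaving it to follow from Proposition \ref{3.1}, Proposition \ref{3.8}, Example \ref{3.3} and Corollary \ref{3.6}): Wedderburn--Artin plus the failure of 2-$\Delta U$ for $M_n$ with $n\geq 2$ forces all matrix blocks to be $1\times 1$, the division rings must then be $\mathbb{Z}_2$ or $\mathbb{Z}_3$, and the semi-local case reduces to the semi-simple case via $R/J(R)$. Your remark that ``semi-simple'' means semi-simple Artinian, so that only finitely many factors occur, is the right point of care and nothing further is needed.
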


\begin{example}\label{3.11}
The ring \(\mathbb{Z}_m\) is 2-$\Delta$U if, and only if, \(m = 2^k 3^l\) for some positive integers \(k\) and \(l\).
\end{example}

\begin{lemma}\label{3.12}
Let \(R\) be a 2-$\Delta$U ring. If \(J(R) = 0\) and every nonzero right ideal of \(R\) contains a nonzero idempotent, then \(R\) is reduced.
\end{lemma}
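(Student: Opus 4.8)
The plan is to argue by contradiction and to show that a non-reduced ring of this kind would contain a corner isomorphic to $M_2(S)$ for some nonzero ring $S$, which is impossible in a 2-$\Delta$U ring by Propositions \ref{3.7} and \ref{3.8}.

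First I would make the standard reduction to square-zero elements: if $R$ is not reduced it has a nonzero nilpotent $a$, and choosing $n\ge 2$ least with $a^n=0$, the element $a^{n-1}$ is nonzero with $(a^{n-1})^2=a^{2n-2}=0$. So we may assume $0\neq a\in R$ and $a^2=0$. Since $aR$ is a nonzero right ideal, the hypothesis yields a nonzero idempotent $e\in aR$; fix $x\in R$ with $e=ax$. The computations I would then record are $ae=a^2x=0$; with $b:=ea$ one gets $bx=e(ax)=e^2=e$ (so $b\neq 0$), $eb=b$, $be=e(ae)=0$, and $b^2=e(ae)a=0$; and with $c:=(1-e)xe$ one gets $ec=0$, $ce=c$, $c^2=0$, and $bc=\big(b(1-e)\big)(xe)=b(xe)=(bx)e=e$ (using $b(1-e)=b$). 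From here it is routine to verify that $e_{11}:=e$, $e_{12}:=b$, $e_{21}:=c$, $e_{22}:=cb$ is a system of $2^2$ matrix units in the sense of the paragraph preceding Proposition \ref{3.28}: $cb$ is an idempotent orthogonal to $e$ (since $e(cb)=(ec)b=0$ and $(cb)e=c(be)=0$), $e_{11}e_{12}e_{22}=b$, $e_{22}e_{21}e_{11}=c$, $b^2=c^2=0$, $bc=e_{11}$, and $cb=e_{22}$, the last being nonzero because $cb=0$ would force $c=c(bc)=(cb)c=0$. Putting $f:=e_{11}+e_{22}$, the identification recalled there gives $fRf\cong M_2(S)$ with $S\neq 0$ (note $f\in S$). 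Since $fRf$ is 2-$\Delta$U by Proposition \ref{3.7} while $M_2(S)$ is not 2-$\Delta$U by Proposition \ref{3.8}, this is the desired contradiction, and therefore $R$ is reduced.

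The only genuinely non-obvious step is guessing the right elements $b=ea$ and $c=(1-e)xe$; once one notices that $e=ax$ forces $bx=e$, every matrix-unit relation drops out by direct multiplication, so I do not expect any serious obstacle beyond spotting that choice. (Incidentally, the hypothesis $J(R)=0$ is automatic here, since $J(R)$ is a right ideal and can contain no nonzero idempotent, so if every nonzero right ideal of $R$ does, then necessarily $J(R)=0$.)
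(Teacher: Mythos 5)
Your argument is correct and all the computations check: with $e=ax$, $b=ea$, $c=(1-e)xe$ one indeed has $eb=b$, $be=0=ec$, $ce=c$, $b^{2}=c^{2}=0$ and $bc=e$, so $\{e,\,b,\,c,\,cb\}$ is a system of $2^{2}$ matrix units (all nonzero, since $bc=e\neq 0$ forces $c\neq 0$, and $cb=0$ would give $c=(cb)c=0$), and the contradiction via Propositions \ref{3.7} and \ref{3.8} then runs exactly as you describe. The only real difference from the paper's proof is how the $2\times 2$ corner is produced: the paper obtains, in one stroke, an idempotent $e$ with $eRe\cong M_2(T)$ for a nontrivial ring $T$ by quoting Levitzki's theorem \cite[Theorem 2.1]{3}, whereas you reprove the relevant special case by hand, constructing the matrix units explicitly from a square-zero element $a$ and a nonzero idempotent of $aR$ supplied by the hypothesis. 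Your route is longer on the page but elementary and self-contained, and it has the small bonus of exposing that the hypothesis $J(R)=0$ is redundant (as you observe, $J(R)$ is a right ideal containing no nonzero idempotent); the paper's route is shorter but rests on the cited structure theorem.
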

\begin{proof}
Suppose that \(R\) is not reduced. Then there exists a nonzero element \(a \in R\) such that \(a^2 = 0\). By \cite [Theorem 2.1]{3}, there is an idempotent \(e \in RaR\) such that \(eRe \cong M_2(T)\) for some non-trivial ring \(T\). By Proposition \ref{3.7}, $eRe$ is a 2-$\Delta$U ring and hence $M_2(T)$ is a 2-$\Delta$U ring. This contradicts to Proposition \ref{3.8}.
\end{proof}

A ring $R$ is called $\pi$-regular if for each $a\in R$, $a^n\in a^nRa^n$ for some integer $n\ge1$. Regular rings are $\pi$-regular. Also, a ring $R$ is said to be strongly $\pi$-regular provided that for any $a\in R$ there exists $n\ge1$, such that $a^n\in a^{n+1}R$.

\begin{theorem}\label{3.13}
Let \(R\) be a ring. The following are equivalent:
\begin{enumerate}
\item 
\(R\) is a regular 2-$\Delta$U ring.
\item 
\(R\) is a \(\pi\)-regular reduced 2-$\Delta$U ring.
\item 
\(R\) has the identity \(x^3 = x\) (i.e., \(R\) is a tripotent ring).
\end{enumerate}
\end{theorem}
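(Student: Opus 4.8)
The plan is to prove the cycle $(iii)\Rightarrow(i)\Rightarrow(ii)\Rightarrow(iii)$, with essentially all the content concentrated in the last implication. For $(iii)\Rightarrow(i)$: if the identity $x^3=x$ holds, then taking $y=x$ in $axa$ gives $axa=a$, so $R$ is von Neumann regular; and for $u\in U(R)$ we get $u^3=u$, hence $u^2=1$, so $u^2-1=0\in\Delta(R)$ and $R$ is $2$-$\Delta$U. For $(i)\Rightarrow(ii)$: a regular ring is $\pi$-regular, has $J(R)=0$, and every nonzero right ideal of it contains a nonzero idempotent (such an ideal contains some $0\neq a$ with $a=axa$, and then $ax\in aR$ is a nonzero idempotent); thus Lemma \ref{3.12} applies and $R$ is reduced, which is $(ii)$.

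The main implication is $(ii)\Rightarrow(iii)$. First I would observe that a reduced $\pi$-regular ring is strongly regular: from $a^n=a^nxa^n$ the element $e=a^nx$ is an idempotent, hence central since reduced rings are abelian, so $a^n(1-e)=0$, then $a(1-e)=0$ because $R$ is reduced, and therefore $a=ae=a^{n+1}x\in a^2R$. Next I would invoke the classical structure result that a strongly regular ring is a subdirect product of division rings: there is a family of (completely prime) ideals $P_i$ with $\bigcap_i P_i=0$ such that each $D_i:=R/P_i$ is a division ring. The crux is to show that units of $D_i$ lift to units of $R$. Given $\bar a\in U(D_i)$, pick any preimage $a\in R$; using strong regularity write $a=ea=ae$ with $e$ a central idempotent such that $a$ is invertible in the corner $eRe$, say with inverse $c\in eRe$. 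Then $u:=a+(1-e)\in U(R)$, its inverse being $c+(1-e)$. The image $\bar e$ of $e$ in the division ring $D_i$ is an idempotent, hence $\bar e\in\{0,1\}$; since $\bar e\,\bar a=\bar a\neq 0$ we must have $\bar e=1$, so $u\mapsto\bar a$. Now Proposition \ref{3.2} shows each $D_i$ is $2$-$\Delta$U, whence by Example \ref{3.3} each $D_i$ is isomorphic to $\mathbb{Z}_2$ or $\mathbb{Z}_3$, and both satisfy $x^3=x$. Since $R$ embeds in $\prod_i D_i$ and the identity $x^3=x$ passes to subrings of products, $R$ satisfies $x^3=x$, i.e., $R$ is tripotent.

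The main obstacle, and the step I would spell out most carefully, is the unit-lifting claim in $(ii)\Rightarrow(iii)$: it is exactly what makes Proposition \ref{3.2} applicable and reduces everything to the classified $2$-$\Delta$U division rings of Example \ref{3.3}. It rests on the fact that in a strongly regular ring every element becomes a unit after adding the complement $1-e$ of a suitable central idempotent $e$, combined with the triviality of idempotents in a division ring. The auxiliary reductions ``reduced $+$ $\pi$-regular $\Rightarrow$ strongly regular $\Rightarrow$ subdirect product of division rings'' are standard, but I would state them explicitly rather than leave them implicit, since the whole argument hinges on passing to division-ring quotients to which Example \ref{3.3} can be applied.
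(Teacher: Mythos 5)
Your proposal is correct, and (i)$\Rightarrow$(ii) and (iii)$\Rightarrow$(i) coincide with the paper's argument (Lemma \ref{3.12} plus the standard facts about regular rings), but your proof of the key implication (ii)$\Rightarrow$(iii) is genuinely different. The paper also first passes from reduced $\pi$-regular to strongly regular, but then stays inside $R$: it uses that a unit-regular ring has $\Delta(R)=0$ (Leroy--Matczuk, \cite[Corollary 16]{2}), so every unit satisfies $u^2=1$, and then invokes Diesl's decomposition for strongly $\pi$-regular rings (\cite[Proposition 2.5]{5}), writing $x=e+u$ with $e$ idempotent, $u$ a unit, and $ex=xe\in Nil(R)=0$, whence $x=u(1-e)$, $x^2=1-e$ and $x^3=x$. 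You instead decompose $R$ as a subdirect product of the division rings $R/P$ ($P$ prime), verify the unit-lifting hypothesis via the central idempotent $e$ with $a+(1-e)\in U(R)$ and $\bar e=1$ in $R/P$, and then pull back the classification of $2$-$\Delta$U division rings (Proposition \ref{3.2} and Example \ref{3.3}) through the embedding $R\hookrightarrow\prod_i D_i$, using that $x^3=x$ is inherited by subrings of products. The paper's route is shorter modulo the two cited external results and avoids any structure theory of prime quotients; yours is more self-contained relative to the paper's own earlier results, its unit-lifting step is the only nontrivial verification, and it makes visible why $\mathbb{Z}_2$ and $\mathbb{Z}_3$ are the building blocks, in the spirit of Corollary \ref{3.26}. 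Both are complete; the standard facts you defer (reduced $+$ $\pi$-regular $\Rightarrow$ strongly regular, strongly regular $\Rightarrow$ subdirect product of division rings, and the central-idempotent/corner-unit form of elements) are indeed classical and your sketched verifications of them are sound.
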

\begin{proof}
\((i) \Rightarrow (ii)\) Since \(R\) is regular, \(J(R) = 0\), and every nonzero right ideal contains a nonzero idempotent. In view of Lemma \ref{3.12}, \(R\) is reduced. Also, clearly every regular ring is \(\pi\)-regular.

\((ii) \Rightarrow (iii)\) Notice that reduced rings are abelian, so \(R\) is abelian regular by \cite [Theorem 3]{4} and hence it is strongly regular. Then \(R\) is unit-regular, so \(\Delta(R) = 0\) by \cite [Corollary 16]{2}. Thus, we have \(Nil(R) = J(R) = \Delta(R) = 0\). On the other hand, \(R\) is strongly \(\pi\)-regular. Let \(x \in R\). In view of \cite [Proposition 2.5]{5}, there is an idempotent \(e \in R\) and a unit \(u \in R\) such that \(x = e + u\), \(ex = xe \in Nil(R) = 0\). So we have \(x = x - xe = x(1-e) = u(1-e) = (1-e)u\). Since \(R\) is a 2-$\Delta$U ring, \(u^2 = 1\). It follows that \(x^2 = (1-e)\). Hence, \(x = x(1-e) = x.x^2 = x^3\).

\((iii) \Rightarrow (i)\) It is clear that \(R\) is regular. Let \(u \in U(R)\). Then we have \(u^3 = u\), which implies that \(u^2 = 1\), and thus, \(R\) is a 2-$\Delta$U ring.
\end{proof}

A ring $R$ is strongly $2$-nil-clean if every element in $R$ is the sum of two idempotents and a nilpotent that commute (for details see \cite{6}).

\begin{theorem}\label{3.14}
The following are equivalent for a ring \(R\):
\begin{enumerate}
\item 
\(R\) is a regular 2-$\Delta$U ring.
\item
 \(R\) is a strongly regular 2-$\Delta$U ring.
\item 
\(R\) is a unit-regular 2-$\Delta$U ring.
\item 
\(R\) has the identity \(x^3 = x\).
\end{enumerate}
\end{theorem}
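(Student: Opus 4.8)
The plan is to lean on Theorem \ref{3.13} for the equivalence of (i) and (iv), and then to close a short implication cycle $(iv)\Rightarrow(ii)\Rightarrow(iii)\Rightarrow(i)$ using only the classical hierarchy ``strongly regular $\Rightarrow$ unit-regular $\Rightarrow$ regular'' together with the observation that none of these implications alters the underlying ring, so the $2$-$\Delta U$ hypothesis is carried along for free.

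First, \((i)\Leftrightarrow(iv)\) is exactly the content of Theorem \ref{3.13}, where it is already shown that a ring is regular and $2$-$\Delta U$ precisely when it satisfies the identity $x^{3}=x$; nothing further is required here.

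Next I would establish \((iv)\Rightarrow(ii)\). Assume $R$ satisfies $x^{3}=x$. For every $a\in R$ we have $a=a^{3}=a^{2}a\in a^{2}R$, so $R$ is strongly regular in the sense recalled in the introduction. Moreover, every $u\in U(R)$ satisfies $u^{3}=u$, whence $u^{2}=1=1+0\in 1+\Delta(R)$, so $R$ is $2$-$\Delta U$. Thus $R$ is a strongly regular $2$-$\Delta U$ ring, i.e.\ (ii) holds. Then \((ii)\Rightarrow(iii)\) follows because every strongly regular ring is unit-regular (a strongly regular ring is abelian regular, and abelian regular rings are unit-regular; see, e.g., Goodearl's monograph), while the $2$-$\Delta U$ condition is retained verbatim. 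Finally \((iii)\Rightarrow(i)\) is immediate, since by definition a unit-regular ring is (von Neumann) regular, and again the $2$-$\Delta U$ property is unchanged. This closes the cycle and yields the asserted equivalences.

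I do not expect a genuine obstacle: the proof is essentially bookkeeping on top of Theorem \ref{3.13}, and the only external input is the classical fact that strongly regular rings are unit-regular. The single point that warrants a word of care is verifying that the $2$-$\Delta U$ property really does survive each step of the cycle, which it trivially does because none of these implications changes the ring itself.
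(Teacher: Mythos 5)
Your proposal is correct, but it is routed differently from the paper's proof. The paper runs the cycle $(i)\Rightarrow(ii)\Rightarrow(iii)\Rightarrow(iv)\Rightarrow(i)$: it gets $(i)\Rightarrow(ii)$ from Lemma \ref{3.12} (a regular 2-$\Delta$U ring is reduced, hence abelian, hence strongly regular), and its only substantive arc is $(iii)\Rightarrow(iv)$, proved directly by invoking $\Delta(R)=0$ for unit-regular rings \cite[Corollary 16]{2}, deducing $u^2=1$ for all units, hence $x^2=u^2e^2=e$ for a unit-regular decomposition $x=ue$, and then passing through the strongly $2$-nil-clean machinery of \cite{6} to reach $x^3=x$; Theorem \ref{3.13} is used only for $(iv)\Rightarrow(i)$. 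You instead reorient the cycle so that the only nontrivial arc, $(i)\Leftrightarrow(iv)$, is wholly absorbed by Theorem \ref{3.13}, and you close $(iv)\Rightarrow(ii)\Rightarrow(iii)\Rightarrow(i)$ with elementary observations ($x^3=x$ gives $a=a^2\cdot a\in a^2R$ and $u^2=1\in 1+\Delta(R)$) plus the single classical fact that strongly regular rings are unit-regular --- a fact the paper itself also uses inside the proof of Theorem \ref{3.13}, so it is safely available. Your route is more economical, needing no appeal to \cite[Corollary 16]{2} or to \cite[Corollary 3.4, Theorem 3.3]{6}; what the paper's route buys is a direct, self-standing argument that unit-regularity together with 2-$\Delta$U forces $u^2=1$ and tripotency, which does not lean on the already-proved equivalence $(i)\Leftrightarrow(iv)$. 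Both are valid; just make sure you keep the citation for strongly regular $\Rightarrow$ unit-regular explicit, since the paper's definition of strongly regular ($a\in a^2R$) does not make that implication formally immediate.
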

\begin{proof}
\((i) \Rightarrow (ii)\) In view of Lemma \ref{3.12}, \(R\) is reduced and hence it is abelian. Then \(R\) is strongly regular.

\((ii) \Rightarrow (iii)\) This is obvious.

\((iii) \Rightarrow (iv)\) Let \(x \in R\). Then, \(x = u e\) for some \(u \in U(R)\) and \(e \in Id(R)\). We know that every unit-regular ring is regular, so \(R\) is regular 2-$\Delta$U and hence \(R\) is abelian. On the other hand, by \cite [Corollary 16]{2}, we have \(\Delta(R) = 0\). Therefore, for any \(u \in U(R)\), we have \(u^2 = 1\). Then, \(x^2 = u^2e^2 = e\). So, \(R\) is a 2-Boolean ring. Thus, by \cite [Corollary 3.4]{6}, \(R\) is strongly \(2\)-nil-clean and hence by \cite [Theorem 3.3]{6}, \(R\) is tripotent.

\((iv) \Rightarrow (i)\) It is clear by Theorem \ref{3.13}.
\end{proof}

\begin{proposition}\label{3.15}
A ring \(R\) is a $\Delta$U ring if, and only if,
\begin{enumerate}
\item 
\(2 \in \Delta(R)\),
\item 
\(R\) is a 2-$\Delta$U ring,
\item
 If \(x^2 \in \Delta(R)\), then \(x \in \Delta(R)\) for every \(x \in R\).
\end{enumerate}
\end{proposition}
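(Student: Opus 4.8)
The plan is to prove the two implications separately, using throughout two facts recorded earlier: that $\Delta(R)$ is a subring of $R$ closed under multiplication by units, and that $1+\Delta(R)\subseteq U(R)$ always holds, so that ``$R$ is $\Delta$U'' is equivalent to the single inclusion $U(R)\subseteq 1+\Delta(R)$.

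For the forward direction, assume $R$ is $\Delta$U. Item (1) is immediate: $-1\in U(R)=1+\Delta(R)$ forces $-2\in\Delta(R)$, hence $2\in\Delta(R)$ (alternatively \cite[Proposition 2.4]{7}, as already invoked in the proof of Proposition \ref{2.1}). Item (2) is the trivial remark that a $\Delta$U ring is 2-$\Delta$U, since for $u\in U(R)=1+\Delta(R)$ one has $u^2\in U(R)=1+\Delta(R)$. The real content is item (3): suppose $x^2\in\Delta(R)$. Then $1-x^2\in 1+\Delta(R)\subseteq U(R)$, and since $x$ commutes with $x^2$ we have $(1-x)(1+x)=1-x^2=(1+x)(1-x)$, so $1-x$ is both left and right invertible, whence $1-x\in U(R)$. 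Because $R$ is $\Delta$U this yields $-x=(1-x)-1\in\Delta(R)$, and since $\Delta(R)$ is a subring, $x\in\Delta(R)$.

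For the converse, assume (1)--(3) and take an arbitrary $u\in U(R)$; the goal is $u-1\in\Delta(R)$. The idea is to push $(u-1)^2$ into $\Delta(R)$ and then apply (3). By (2), $u^2=1+r$ with $r\in\Delta(R)$, so
\[
(u-1)^2=u^2-2u+1=(1+r)-2u+1=r-(2u-2).
\]
Now $2\in\Delta(R)$ by (1), and $\Delta(R)$ is closed under multiplication by the unit $u$, so $2u\in\Delta(R)$; hence $2u-2\in\Delta(R)$ as a difference of two elements of the subring $\Delta(R)$, and therefore $(u-1)^2=r-(2u-2)\in\Delta(R)$. By (3) we conclude $u-1\in\Delta(R)$, i.e.\ $u\in 1+\Delta(R)$. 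Thus $U(R)\subseteq 1+\Delta(R)$ and $R$ is $\Delta$U, completing the proof.

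The one delicate point, and the step I would be careful to phrase correctly, is the claim that $2u-2\in\Delta(R)$ in the converse: since $\Delta(R)$ need not be an ideal of $R$, one cannot simply multiply $u-1$ (not yet known to lie in $\Delta(R)$) by $2$. The argument succeeds precisely because $2u$ and $2$ each lie in $\Delta(R)$ individually — using $2\in\Delta(R)$ together with closure of $\Delta(R)$ under multiplication by the unit $u$ — so their difference lands in the subring $\Delta(R)$. Everything else is routine bookkeeping with the subring structure of $\Delta(R)$ and with the standard fact that $1+\Delta(R)\subseteq U(R)$.
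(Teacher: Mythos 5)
Your proof is correct and takes essentially the same route as the paper: the converse is the same computation, writing $(u-1)^2$ as $(u^2-1)-(2u-2)$ and placing it in $\Delta(R)$ via $2\in\Delta(R)$, closure of $\Delta(R)$ under multiplication by the unit $u$, and the subring structure, then applying (iii). The only difference is that for item (iii) of the forward direction the paper simply cites \cite[Proposition 2.4]{7}, whereas you supply a short self-contained argument via $1-x^2=(1-x)(1+x)=(1+x)(1-x)\in U(R)$, which is a correct (and arguably preferable) substitute.
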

\begin{proof}
\(\Rightarrow\) As \(R\) is a $\Delta$U ring, then \(-1 = 1 + r\) for some \(r \in \Delta(R)\). This implies that \(-2 \in \Delta(R)\) and so \(2 \in \Delta(R)\). Clearly every $\Delta$U ring is 2-$\Delta$U. The result follows from \cite [Proposition 2.4]{7}.

\(\Leftarrow\) Let \(u \in U(R)\). Then \((u-1)^2 + 2(u-1) = (u-1)(u+1) = u^2-1 \in \Delta(R)\) as \(R\) is a 2-$\Delta$U ring. It follows from \(2 \in \Delta(R)\) and \(\Delta(R)\) is a subring of \(R\) that \((u-1)^2 \in \Delta(R)\).
So by (iii), we have \(u - 1 \in \Delta(R)\) and hence \(R\) is a $\Delta$U ring.
\end{proof}

Due to Kosan et al. \cite {17}, a ring $R$ is called semi-tripotent if for each $a\in R$, $a=e+j$ where $e^3=e$ and $j\in J(R)$ (equivalently, $R/J(R)$ has the identity $x^3=x$ and idempotents lift modulo $J(R)$).

\begin{theorem}\label{3.16}
Let \(R\) be a ring. Then, the following are equivalent:
\begin{enumerate}
\item 
\(R\) is a semi-regular 2-$\Delta$U ring.
\item 
\(R\) is an exchange 2-$\Delta$U ring.
\item 
\(R\) is a semi-tripotent ring.
\end{enumerate}
\end{theorem}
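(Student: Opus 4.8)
The plan is to prove the cycle $(iii)\Rightarrow(i)\Rightarrow(ii)\Rightarrow(iii)$. The implication $(i)\Rightarrow(ii)$ is immediate, since every semi-regular ring is exchange and the $2$-$\Delta U$ condition is unaffected. For $(iii)\Rightarrow(i)$, if $R$ is semi-tripotent then $\bar R:=R/J(R)$ satisfies $x^3=x$ and idempotents lift modulo $J(R)$; applying Theorem \ref{3.13} to $\bar R$ (note $J(\bar R)=0$), the identity $x^3=x$ forces $\bar R$ to be a regular $2$-$\Delta U$ ring, so $R$ is semi-regular, and it is $2$-$\Delta U$ by Corollary \ref{3.6}.

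The substance lies in $(ii)\Rightarrow(iii)$. Set $\bar R:=R/J(R)$. Then $\bar R$ is exchange with $J(\bar R)=0$, is $2$-$\Delta U$ by Corollary \ref{3.6}, and $\Delta(\bar R)=J(\bar R)=0$ (as $\bar R$ is exchange; cf. \cite{2}); moreover the same properties pass to every corner $g\bar R g$ at a central idempotent $g$, which is again exchange and $2$-$\Delta U$ with $J=0$ and $\Delta=0$. Since exchange rings are semi-potent, every nonzero right ideal of $\bar R$ (or of such a corner) contains a nonzero idempotent, so Lemma \ref{3.12} makes $\bar R$ reduced, hence abelian and Dedekind-finite. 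As idempotents lift modulo $J(R)$ in the exchange ring $R$, it suffices to show that $\bar R$ satisfies $x^3=x$.

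Fix $a\in\bar R$. By the exchange property there is a central idempotent $e$ with $e\in a\bar R$ and $1-e\in(1-a)\bar R$; write $\bar R=e\bar R\times S$ with $S=(1-e)\bar R$. Reading $e=ab$ and $1-e=(1-a)c$ (for suitable $b,c$) componentwise and using Dedekind-finiteness, $a$ restricts to a unit of $e\bar R$ while $1-a$ restricts to a unit $v$ of $S$; and because $\Delta$ vanishes in every corner, every such unit squares to the identity of its corner. On $e\bar R$ this already gives $a^3=a$. On $S$ we have $a|_S=1_S-v$ with $v^2=1_S$, so $(a|_S)^2=2\,a|_S$, and it remains to prove $3\,a|_S=0$. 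If not, then, $S$ being reduced, $(3\,a|_S)S$ is a nonzero right ideal of the semi-potent ring $S$ with $J(S)=0$, hence contains a nonzero idempotent $g=3\,a|_S\cdot s$. In the corner $gS$ the element $3\,a|_{gS}$ is right invertible, hence a unit; since $gS$ is $2$-$\Delta U$ with $\Delta=0$, $(3\,a|_{gS})^2=1_{gS}$, and combined with $(a|_{gS})^2=2\,a|_{gS}$ this yields $18\,a|_{gS}=1_{gS}$, so $a|_{gS}$ is itself a unit of $gS$ with $(a|_{gS})^2=1_{gS}$; together with $(a|_{gS})^2=2\,a|_{gS}$ this forces $a|_{gS}=2\cdot 1_{gS}$, then $3\cdot 1_{gS}=0$, and finally $3\,a|_{gS}=6\cdot 1_{gS}=0$ — contradicting that $3\,a|_{gS}$ is a unit of the nonzero ring $gS$. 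Hence $3\,a|_S=0$, and $a^3=a$ throughout $\bar R$.

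The genuine obstacle is exactly this last reduction $3\,a|_S=0$. One cannot shortcut it by claiming ``a reduced exchange ring with zero Jacobson radical is von Neumann regular'' — that is false (for instance $C([0,1],\mathbb{R})$ is such a ring and is not regular) — so the $2$-$\Delta U$ hypothesis must be pushed into a corner where the relevant element becomes a unit, and the numerical collapse $3=0$ extracted there; the bookkeeping with corners, semi-potence, Dedekind-finiteness, and $\Delta(g\bar R g)=0$ is where care is needed. Throughout, the fact that $\Delta=J$ for the exchange ring $\bar R$ and its corners, together with the permanence of the exchange/semi-potent properties under forming corners and passing to $R/J(R)$, is used without further comment.
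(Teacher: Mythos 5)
Your argument is correct in substance, but it takes a genuinely more self-contained route than the paper's on the two nontrivial implications. The paper's $(ii)\Rightarrow(iii)$ shares your opening moves (pass to $R/J(R)$, use Lemma \ref{3.12} to get a reduced, hence abelian, ring, and conclude $\Delta=J=0$ so that every unit squares to $1$), but at that point it simply invokes the $2$-$UJ$ theorem \cite[Theorem 3.3]{1} to conclude semi-tripotency, and it obtains $(iii)\Rightarrow(i)$ from the same citation; you instead prove the identity $x^3=x$ in $\bar R$ by hand, via the exchange decomposition $\bar R=e\bar R\times S$ with $a$ a unit on $e\bar R$ and $1-a$ a unit on $S$, the relation $a^2=2a$ on $S$, and the corner trick that makes $3a$ a unit of some corner $gS$ and extracts $3g=0$ there; your $(iii)\Rightarrow(i)$ likewise runs through the internal Theorem \ref{3.13} rather than \cite{1}. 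I checked the corner computation ($9a^2g=g$, $a^2g=2ag$, hence $18ag=g$, so $ag\in U(gS)$, then $(ag)^2=g=2ag$, $ag=2g$, $3g=0$, $3ag=0$, contradicting invertibility of $3ag$ in the nonzero ring $gS$), and it is sound; what your route buys is independence from the $2$-$UJ$ literature, at the cost of more bookkeeping with corners. The one soft spot is the assertion $\Delta(\bar R)=J(\bar R)=0$ ``as $\bar R$ is exchange, cf.\ \cite{2}'', which you also apply to all corners: that general statement is not obviously in \cite{2}. In your situation it is true and quick to prove once Lemma \ref{3.12} has made the ring abelian and semipotent with $J=0$: if $0\neq a\in\Delta(\bar R)$, pick a nonzero central idempotent $e=as\in a\bar R$; then $ea$ is right invertible, hence a unit, in the Dedekind-finite corner $e\bar Re$, while a minor variant of the computation in Proposition \ref{3.7} (using that $e$ is central) shows $ea\in\Delta(e\bar Re)$, and $\Delta\cap U=\emptyset$ in a nonzero ring, a contradiction. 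You should insert that short lemma rather than lean on \cite{2}; for comparison, the paper secures $\Delta=J$ at the corresponding point only through its citations to \cite[Proposition 14]{9} and \cite[Theorem 11]{2}.
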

\begin{proof} 
\((i) \Rightarrow (ii)\) In view of \cite [Proposition 1.6]{8}, semi-regular rings are exchange.

\((ii) \Rightarrow (iii)\) By \cite [Corollary 2.4]{8}, \(R/J(R)\) is exchange and idempotents lift modulo \(J(R)\). Also, by Theorem \ref {3.5}, \(R/J(R)\) is 2-$\Delta$U. So, without loss of generality, it can be assumed that \(J(R) = 0\). Since \(R\) is an exchange ring, every nonzero one sided ideal contains a nonzero idempotent. By Lemma \ref{3.12}, \(R\) is reduced and hence it is abelian. Thus, \(R\) is abelian weakly clean. Hence by \cite [Proposition 14]{9}, \(R/J(R) \cong M_n(D)\) where \(1 \leq n \leq 2\) and \(D\) is a division ring. Then by \cite [Theorem 11]{2}, we have \(\Delta(R) = J(R)\) and hence \(\Delta(R) = 0\). As \(R\) is 2-$\Delta$U, we have \(v^2 = 1\) for every \(v \in U(R)\). Then the result follows from \cite [Theorem 3.3]{1}.

\((iii) \Rightarrow (i)\) By \cite [Theorem 3.3]{1}, \(R\) is semi-regular 2-$UJ$ and hence \(R\) is semi-regular 2-$\Delta$U.
\end{proof}

\begin{corollary}\label{3.17}
Let \( R \) be a 2-$\Delta$U ring. Then the following are equivalent:
\begin{enumerate}
\item 
\( R \) is a semi-regular ring.
\item 
\( R \) is an exchange ring.
\item 
\( R \) is a clean ring.
\end{enumerate}
\end{corollary}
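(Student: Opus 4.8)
The plan is to deduce all three equivalences from Theorem \ref{3.16} together with the standard implications among semi-regular, exchange, and clean rings. First, recall the well-known general facts: every semi-regular ring is exchange (\cite[Proposition 1.6]{8}), and every clean ring is exchange (\cite[Proposition 1.8]{8}). So among arbitrary rings we always have $(i) \Rightarrow (ii)$ and $(iii) \Rightarrow (ii)$. The work is therefore to close the cycle using the hypothesis that $R$ is 2-$\Delta$U, i.e.\ to show $(ii) \Rightarrow (i)$ and $(ii) \Rightarrow (iii)$ under this extra assumption.

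For $(ii) \Rightarrow (i)$: assuming $R$ is an exchange 2-$\Delta$U ring, Theorem \ref{3.16} gives that $R$ is semi-tripotent, and the same theorem (the implication $(iii)\Rightarrow(i)$ there) immediately returns that $R$ is semi-regular 2-$\Delta$U. So $(i)$ holds. For $(ii) \Rightarrow (iii)$: again by Theorem \ref{3.16}, an exchange 2-$\Delta$U ring $R$ is semi-tripotent, meaning $R/J(R)$ has the identity $x^3 = x$ and idempotents lift modulo $J(R)$. From the tripotent identity on $R/J(R)$ one checks directly that $R/J(R)$ is (von Neumann) regular: for $\bar a \in R/J(R)$ we have $\bar a = \bar a \cdot \bar a \cdot \bar a$, so $\bar a$ is regular with $\bar a$ itself as a "quasi-inverse". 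Hence $R/J(R)$ is regular and idempotents lift modulo $J(R)$, which is exactly the definition of semi-regular; and since semi-regular rings are clean (a semi-regular ring is clean because $R/J(R)$ regular is clean and clean-ness lifts along $J(R)$ when idempotents lift), we get that $R$ is clean. Alternatively, and more cleanly, one observes that a semi-tripotent ring is $\Delta$-clean-type: since $R/J(R)$ is tripotent hence strongly $\pi$-regular and in particular clean, and idempotents lift modulo the nil-like radical $J(R)$, $R$ itself is clean.

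Thus the cycle $(i) \Rightarrow (ii) \Rightarrow (iii) \Rightarrow (ii) \Rightarrow (i)$ closes (with $(iii)\Rightarrow(ii)$ being the general clean $\Rightarrow$ exchange fact), and all three conditions are equivalent for a 2-$\Delta$U ring. The only mildly delicate point is the passage "$R/J(R)$ regular with lifting idempotents $\Rightarrow$ $R$ clean": this is where one must invoke that a semi-regular ring is clean, a result that ultimately rests on the fact that clean decompositions of $R/J(R)$ lift along $J(R)$ once idempotents lift; I would cite the relevant statement from \cite{8} (or simply route everything through the semi-tripotent characterization of Theorem \ref{3.16} and the known fact that semi-tripotent rings are clean, avoiding the lifting argument entirely). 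I expect no genuine obstacle here, since Theorem \ref{3.16} already does the heavy lifting—this corollary is essentially a repackaging of it.
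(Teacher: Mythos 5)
Your handling of (i)$\Leftrightarrow$(ii) is fine and matches the paper: Theorem \ref{3.16} gives exchange 2-$\Delta$U $\Leftrightarrow$ semi-tripotent $\Leftrightarrow$ semi-regular 2-$\Delta$U, and clean $\Rightarrow$ exchange is general. The problem is your primary argument for (ii)$\Rightarrow$(iii): the assertion that ``semi-regular rings are clean'' (equivalently, that ``$R/J(R)$ regular is clean'') is false in general. There exist von Neumann regular rings that are not clean (a classical example due to Bergman), and such a ring is semi-regular with $J=0$; so regularity of $R/J(R)$ plus idempotent lifting does \emph{not} by itself yield cleanness of $R$. The lifting mechanism you describe only works once you know $R/J(R)$ is \emph{clean}, not merely regular. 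Fortunately, your fallback route supplies exactly this: semi-tripotency gives $x^3=x$ in $R/J(R)$, so $R/J(R)$ is tripotent, hence strongly regular, hence abelian exchange and therefore clean, and then ``$R/J(R)$ clean $+$ idempotents lift modulo $J(R)$ $\Rightarrow$ $R$ clean'' is the standard (and correct) lifting fact; alternatively one can quote directly that semi-tripotent rings are clean (\cite{17}). So the corollary is proved, but only via that alternative; the first justification should be deleted.

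For comparison, the paper proves (ii)$\Rightarrow$(iii) differently: it invokes Lemma \ref{3.12} to conclude that an exchange 2-$\Delta$U ring (after reducing modulo $J(R)$, where the lemma's hypotheses hold) is reduced, hence abelian, and then uses Nicholson's result that abelian exchange rings are clean (\cite[Proposition 1.8]{8}). Your route goes through the semi-tripotent characterization and cleanness of tripotent quotients instead; both are legitimate, and yours has the mild advantage of reusing Theorem \ref{3.16} wholesale, while the paper's is more self-contained at the level of Lemma \ref{3.12}. Just make sure the final write-up rests on the tripotent/semi-tripotent cleanness argument and not on the false ``semi-regular $\Rightarrow$ clean'' shortcut.
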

\begin{proof}
By Theorem \ref{3.16}, (i) \(\Leftrightarrow\) (ii). 

\((iii) \Rightarrow (ii)\) This is clear.

\((ii) \Rightarrow (iii)\) If \( R \) is exchange 2-$\Delta$U, then \( R \) is reduced by Lemma \ref{3.12} and hence it is abelian. Therefore, \( R \) is abelian exchange, so it is clean.
\end{proof}

\begin{corollary}\label{3.18}
Let \( R \) be a ring. The following are equivalent:
\begin{enumerate}
\item 
\( R \) is a semi-regular 2-$\Delta$U ring and \(J(R)\) is nil.
\item 
\( R \) is an exchange 2-$\Delta$U ring and \(J(R)\) is nil.
\item 
\( R \) is a strongly $2$-nil-clean ring.
\end{enumerate}
\end{corollary}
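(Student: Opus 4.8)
The plan is to route everything through Theorem \ref{3.16}, which already identifies exchange (equivalently semi-regular) 2-$\Delta$U rings with semi-tripotent rings, and then to check that imposing the extra hypothesis that $J(R)$ be nil is exactly what upgrades ``semi-tripotent'' to ``strongly $2$-nil-clean''. So the corollary should follow by bookkeeping, with the only substantive external input being the structure theory of strongly $2$-nil-clean rings from \cite{6}.

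First I would dispose of (i) $\Leftrightarrow$ (ii): by Corollary \ref{3.17} a 2-$\Delta$U ring is semi-regular precisely when it is exchange, and the side condition that $J(R)$ be nil is identical in both statements, so (i) and (ii) say the same thing. Next, for (ii) $\Rightarrow$ (iii), assume $R$ is exchange 2-$\Delta$U with $J(R)$ nil. By Theorem \ref{3.16}, $R$ is semi-tripotent, so $R/J(R)$ satisfies $x^3=x$; combining the tripotency of $R/J(R)$ with the nilness of $J(R)$ and invoking the characterization of strongly $2$-nil-clean rings from \cite{6} (a ring is strongly $2$-nil-clean iff $J(R)$ is nil and $R/J(R)$ is tripotent), we conclude $R$ is strongly $2$-nil-clean.

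For the converse (iii) $\Rightarrow$ (ii), suppose $R$ is strongly $2$-nil-clean. Then by \cite{6} we know $J(R)$ is nil and $R/J(R)$ has the identity $x^3=x$; since a nil ideal has the idempotent-lifting property, idempotents lift modulo $J(R)$, so $R$ is semi-tripotent. Theorem \ref{3.16} then gives that $R$ is exchange 2-$\Delta$U, and together with the nilness of $J(R)$ this is exactly (ii).

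The step I expect to be the only genuinely nontrivial one is the appeal to \cite{6}: everything else is immediate from Theorem \ref{3.16}, Corollary \ref{3.17}, and the standard fact that idempotents lift modulo a nil ideal. In writing up I would therefore be careful to pin down the exact statement used from \cite{6} and verify that \emph{both} directions hold there --- in particular that being strongly $2$-nil-clean forces $J(R)$ to be nil and $R/J(R)$ to be tripotent --- since that is the one place where the argument could stumble if the cited results are stated only under extra hypotheses.
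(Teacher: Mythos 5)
Your proof is correct, but it follows a genuinely different route from the paper's. The paper handles (i)$\Rightarrow$(ii) as the trivial implication (semi-regular rings are exchange), proves (ii)$\Rightarrow$(iii) by observing that for an exchange 2-$\Delta$U ring one has $\Delta(R)=J(R)$ (a fact extracted from the \emph{proof} of Theorem \ref{3.16}, not its statement), so that $u^2-1\in J(R)\subseteq Nil(R)$ makes $R$ a 2-$UU$ ring, and then quotes \cite[Theorem 4.1]{10} (exchange 2-$UU$ implies strongly $2$-nil-clean); the remaining implication (iii)$\Rightarrow$(i) is closed by citing \cite[Corollary 3.5]{1} together with the inclusion of 2-$UJ$ rings among 2-$\Delta$U rings. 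You instead stay entirely at the level of the \emph{statement} of Theorem \ref{3.16}: exchange (equivalently, by Corollary \ref{3.17}, semi-regular) 2-$\Delta$U means semi-tripotent, and then you convert ``$J(R)$ nil $+$ $R/J(R)$ tripotent'' back and forth into ``strongly $2$-nil-clean'' via the characterization in \cite{6}. Your one flagged worry is not an actual gap: the two-way characterization you need is indeed available in Chen--Sheibani (it is essentially their main structure theorem, often phrased as ``$R$ is strongly $2$-nil-clean iff $Nil(R)$ is a nil ideal with $R/Nil(R)$ tripotent,'' and in that situation $Nil(R)=J(R)$ since tripotent rings have zero Jacobson radical), and the idempotent-lifting you invoke modulo the nil ideal $J(R)$ is standard. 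What each approach buys: yours is more self-contained relative to this paper (no need for the hidden equality $\Delta(R)=J(R)$ or for the auxiliary 2-$UU$/2-$UJ$ classes) and proves (i)$\Leftrightarrow$(ii) directly rather than around a cycle; the paper's version leans on two other external results (\cite{10}, \cite{1}) but avoids having to pin down the $J$-nil/tripotent characterization from \cite{6}.
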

\begin{proof} 
\((i) \Rightarrow (ii)\) This is clear.

\((ii) \Rightarrow (iii)\) Since \( R \) is exchange 2-$\Delta$U, \( \Delta(R) = J(R) \). Then, for any \( u \in U(R) \), we have \( u^2 - 1 \in \Delta(R) = J(R) \subseteq Nil(R)\). So, \( R \) is 2-$UU$ ring. Therefore, \( R \) is exchange 2-$UU$ ring and hence it is strongly \( 2 \)-nil-clean by \cite [Theorem 4.1]{10}.

\((iii) \Rightarrow (i)\) This is clear by \cite [Corollary 3.5]{1} and we know that every $2$-UJ ring is $2$-$\Delta$U.
\end{proof}

\begin{corollary}\label{3.19}
Let \(R\) be a ring. Then, the following are equivalent:
\begin{enumerate}
\item 
\(R\) is a regular $\Delta$U ring.
\item 
\(R\) is a \(\pi\)-regular reduced $\Delta$U ring.
\item 
 \(R\) is a Boolean ring.
\end{enumerate}
\end{corollary}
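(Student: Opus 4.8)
The plan is to prove the cycle $(iii)\Rightarrow(i)\Rightarrow(ii)\Rightarrow(iii)$, leaning throughout on Theorem~\ref{3.13} and Corollary~\ref{2.5} and on the trivial observation that every $\Delta U$ ring is $2$-$\Delta U$, so that all of the $2$-$\Delta U$ machinery of Section~3 applies. For $(iii)\Rightarrow(i)$ I would note that a Boolean ring satisfies $a=a^2=a\cdot a\cdot a$ for every $a$, hence is von Neumann regular; moreover its only unit is $1$ and $\Delta(R)=0$ (if $r+1\in U(R)=\{1\}$ then $r=0$), so $U(R)=1+\Delta(R)$ and $R$ is $\Delta U$. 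Alternatively this implication is immediate from Corollary~\ref{2.5}.

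For $(i)\Rightarrow(ii)$: a regular ring is $\pi$-regular, has $J(R)=0$, and has the property that every nonzero right ideal contains a nonzero idempotent; since $R$ is in particular $2$-$\Delta U$, Lemma~\ref{3.12} forces $R$ to be reduced, and the $\Delta U$ hypothesis is untouched, so $R$ is a $\pi$-regular reduced $\Delta U$ ring. For $(ii)\Rightarrow(iii)$: again $R$ is a $\pi$-regular reduced $2$-$\Delta U$ ring, so Theorem~\ref{3.13}, implication $(ii)\Rightarrow(iii)$, gives that $R$ satisfies the identity $x^3=x$; in particular $R$ is regular. Now $R$ is a regular $\Delta U$ ring, and Corollary~\ref{2.5} upgrades this to: $R$ is Boolean, which closes the loop.

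I expect no genuine obstacle here; the argument is essentially bookkeeping. The only point deserving a moment's care is recognizing that the step from "$2$-$\Delta U$" to "$\Delta U$" is free in parts $(i)$ and $(ii)$ because it is assumed, so Theorem~\ref{3.13} applies verbatim, and that the tripotent identity $x^3=x$ it yields makes $R$ regular, which is exactly what is needed to invoke Corollary~\ref{2.5} and promote the conclusion from "tripotent" to "Boolean". If one preferred a route for $(ii)\Rightarrow(iii)$ that avoids Corollary~\ref{2.5}, one could instead combine $2\in\Delta(R)$ (which holds in any $\Delta U$ ring) with Proposition~\ref{3.15}(iii) applied to the tripotent identity to force $x=x^2$ for all $x$; but routing through Corollary~\ref{2.5} is cleaner and is the approach I would write up.
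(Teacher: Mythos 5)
Your proof is correct and follows essentially the same route as the paper: the paper disposes of the corollary in one line by citing Theorem~\ref{3.13} together with the external fact that a regular $\Delta U$ ring is Boolean (its reference [7, Theorem 4.4], which is the paper's Corollary~\ref{2.5} in internal form), and your cycle $(iii)\Rightarrow(i)\Rightarrow(ii)\Rightarrow(iii)$ just spells out that same combination in detail.
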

\begin{proof}
This is obvious by Theorem \ref{3.13} and \cite [Theorem 4.4]{7}.	
\end{proof}	

\begin{corollary}\label{3.20}
Let \(R\) be a ring. The following are equivalent:
\begin{enumerate}
\item 
\(R\) is a semi-regular \(\Delta U\) ring.
\item 
\(R\) is an exchange $\Delta$U ring.
\item 
\(R\) is a \(\Delta\)-clean ring.
\end{enumerate}
\end{corollary}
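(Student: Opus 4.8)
The plan is to prove the cyclic chain of implications $(i)\Rightarrow(ii)\Rightarrow(iii)\Rightarrow(i)$, leaning almost entirely on Corollary \ref{3.17} together with two facts already recorded in the paper: every $\Delta U$ ring is $2$-$\Delta U$ (noted in the Introduction), and a ring is $\Delta$-clean if and only if it is a clean $\Delta U$ ring (Karabacak et al., \cite{7}).

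For $(i)\Rightarrow(ii)$, I would simply recall that a semi-regular ring is exchange by \cite[Proposition 1.6]{8} — the very fact used in the proof of Theorem \ref{3.16} — so a semi-regular $\Delta U$ ring is automatically an exchange $\Delta U$ ring, with no further work.

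For $(ii)\Rightarrow(iii)$, since every $\Delta U$ ring is $2$-$\Delta U$, an exchange $\Delta U$ ring $R$ is in particular an exchange $2$-$\Delta U$ ring, hence clean by the implication $(ii)\Rightarrow(iii)$ of Corollary \ref{3.17}. Thus $R$ is a clean $\Delta U$ ring, and therefore $\Delta$-clean by \cite{7}. For $(iii)\Rightarrow(i)$, conversely, a $\Delta$-clean ring is a clean $\Delta U$ ring by \cite{7}; clean rings are exchange (as recalled in the Introduction), so $R$ is an exchange $\Delta U$ ring, whence an exchange $2$-$\Delta U$ ring, and Corollary \ref{3.17} yields that $R$ is semi-regular. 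Hence $R$ is a semi-regular $\Delta U$ ring.

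The only point requiring any care — and it is a harmless one — is the passage between the hypotheses ``$\Delta U$'' and ``$2$-$\Delta U$'' needed to invoke Corollary \ref{3.17}; since $\Delta U \Rightarrow 2$-$\Delta U$ this is immediate, so I do not anticipate a genuine obstacle. In effect the statement is a direct consequence of Corollary \ref{3.17} and the equivalence ``$\Delta$-clean $\iff$ clean $\Delta U$'', exactly parallel to how Corollary \ref{3.17} itself specializes Theorem \ref{3.16}.
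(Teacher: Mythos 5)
Your proof is correct, but it follows a genuinely different route from the paper's. The paper disposes of this corollary in one line by citing the relevant results of Karabacak et al.\ directly (\cite[Theorem 4.2 and Corollary 4.7]{7}), i.e.\ it outsources the whole equivalence to the $\Delta U$ literature. You instead derive it internally: you observe that a $\Delta U$ ring is $2$-$\Delta U$, run the cycle $(i)\Rightarrow(ii)\Rightarrow(iii)\Rightarrow(i)$ through Corollary \ref{3.17} (semi-regular $\Leftrightarrow$ exchange $\Leftrightarrow$ clean for $2$-$\Delta U$ rings), and only import from \cite{7} the single fact that $\Delta$-clean is the same as clean $\Delta U$ — a fact the paper itself records in the Introduction. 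Each step checks out: semi-regular $\Rightarrow$ exchange is \cite[Proposition 1.6]{8}, exchange $\Rightarrow$ clean and clean/exchange $\Rightarrow$ semi-regular come from Corollary \ref{3.17} applied to the underlying $2$-$\Delta U$ ring, and the passage between ``clean $\Delta U$'' and ``$\Delta$-clean'' is the cited equivalence. What your approach buys is a more self-contained argument that exhibits Corollary \ref{3.20} as the $\Delta U$ specialization of the paper's own $2$-$\Delta U$ machinery (Theorem \ref{3.16}/Corollary \ref{3.17}), exactly parallel to how Corollary \ref{3.19} specializes Theorem \ref{3.13}; what the paper's citation buys is brevity and independence from the (slightly delicate) internal proofs of Lemma \ref{3.12} and Corollary \ref{3.17}, on whose correctness your argument implicitly rests.
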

\begin{proof}
The result follows from \cite [Theorem 4.2 and Corollary 4.7]{7}.
\end{proof}

\section{Some extensions of 2-$\Delta U$ rings}

We say that $C$ is an unital subring of a ring $D$ if $\emptyset\neq C\subseteq D$ and, for any $x,y\in C$, the relations $x-y$, $xy\in C$ and $1_{D}\in C$ hold. Let $D$ be a ring and $C$ an unital subring of $D$, and denote by $R[D,C]$ the set $\lbrace (d_{1},\ldots ,d_{n},c,c,\ldots ): d_{i}\in D,c\in C,1\leq i\leq n \rbrace$. Then, $R[D,C]$ forms a ring under the usual component-wise addition and multiplication. The ring $R[D,C]$ is called the tail ring extension.

\begin{proposition}\label{3.25}
\(R[D, C]\) is a 2-$\Delta$U ring if, and only if, \(D\) and \(C\) are 2-$\Delta$U rings.
\end{proposition}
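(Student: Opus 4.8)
The plan is to combine the coordinatewise description of units in the tail ring extension with Proposition~\ref{3.2} for the forward implication and a direct verification of the defining property of $\Delta(\cdot)$ for the reverse one. The routine preliminary fact is that an element $(d_1,\dots,d_n,c,c,\dots)$ of $R[D,C]$ is a unit precisely when $d_i\in U(D)$ for every $i$ and $c\in U(C)$: the coordinatewise inverse again lies in $R[D,C]$ because $c^{-1}\in C$, and conversely a two-sided inverse forces each coordinate to be invertible in $D$ and, reading off the common constant tail, the eventual coordinate to be invertible in $C$. I will also use freely that $U(C)\subseteq U(D)$, since $C$ is a unital subring of $D$.

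For the ``only if'' direction, assume $R[D,C]$ is $2$-$\Delta U$. Projection onto the first coordinate is a ring epimorphism $R[D,C]\to D$ along which every $v\in U(D)$ lifts to the unit $(v,1,1,\dots)$, and passing to the eventual value gives a ring epimorphism $R[D,C]\to C$, $(d_1,\dots,d_n,c,c,\dots)\mapsto c$, along which every $e\in U(C)$ lifts to the unit $(e,e,e,\dots)$. Proposition~\ref{3.2} then yields that both $D$ and $C$ are $2$-$\Delta U$.

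For the ``if'' direction, take $u=(u_1,\dots,u_n,e,e,\dots)\in U(R[D,C])$, so $u_i\in U(D)$ and $e\in U(C)\subseteq U(D)$, and observe that $u^2-1=(u_1^2-1,\dots,u_n^2-1,\,e^2-1,e^2-1,\dots)$. Since $D$ is $2$-$\Delta U$, each $u_i^2-1$ and also $e^2-1$ lies in $\Delta(D)$; since $C$ is $2$-$\Delta U$, moreover $e^2-1\in\Delta(C)\subseteq C$. It then suffices to show that any $r=(r_1,\dots,r_n,s,s,\dots)$ with $r_i,s\in\Delta(D)$ and $s\in\Delta(C)$ satisfies $r+U(R[D,C])\subseteq U(R[D,C])$, i.e.\ $r\in\Delta(R[D,C])$: indeed, for an arbitrary $v=(v_1,\dots,v_m,f,f,\dots)\in U(R[D,C])$ with $v_i\in U(D)$ and $f\in U(C)\subseteq U(D)$, every coordinate of $r+v$ before the common constant tail lies in $\Delta(D)+U(D)\subseteq U(D)$, while the eventual coordinate is $s+f\in\Delta(C)+U(C)\subseteq U(C)\subseteq C$; so $r+v\in R[D,C]$ and it is a unit by the preliminary fact. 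Applying this with $r=u^2-1$ gives $u^2-1\in\Delta(R[D,C])$, whence $R[D,C]$ is $2$-$\Delta U$.

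The step I expect to need the most care is that final verification that $r\in\Delta(R[D,C])$: when $n\neq m$ one has to deal separately with the finitely many positions where one of $r,v$ has already stabilized while the other has not, and it is precisely there that $U(C)\subseteq U(D)$ and $r_i,s\in\Delta(D)$ are needed, while $s\in\Delta(C)$ and $f\in U(C)$ govern the common tail (and are what make $r+v$ lie in $R[D,C]$ rather than merely in $\prod D$). If desired, this bookkeeping can be isolated as a preliminary lemma stating that $\Delta(R[D,C])$ consists exactly of the tails $(r_1,\dots,r_n,s,s,\dots)$ with $r_i\in\Delta(D)$ and $s\in\Delta(D)\cap\Delta(C)$, after which the proposition is immediate.
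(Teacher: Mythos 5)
Your proof is correct and follows essentially the same route as the paper: the ``if'' direction is the paper's argument verbatim (coordinatewise description of units, then checking $\bar{u}^2-1+\bar{a}\in U(R[D,C])$ position by position, with the care at the non-stabilized positions handled just as in the paper via $\Delta(D)+U(D)\subseteq U(D)$ and $U(C)\subseteq U(D)$). The only difference is in the ``only if'' direction, where you invoke Proposition \ref{3.2} through the two surjections onto $D$ and onto the eventual value in $C$ (along which units visibly lift), whereas the paper redoes the corresponding computation with $\Delta$ by hand; your shortcut is legitimate and slightly cleaner.
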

\begin{proof}
Let $R[D, C]$ be a 2-$\Delta$U ring. Firstly, we prove that $D$ is a 2-$\Delta$U ring. Let $u\in U(D)$. Then $\bar{u} = (u,1, 1, \ldots)\in U(R[D, C])$. By the hypothesis, we have $(u^2 - 1, 0, 0, \ldots) \in \Delta(R[D, C])$, so $(u^2-1, 0, 0, \ldots) + U(R[D, C]) \subseteq U(R[D, C])$. Thus, for all $v \in U(D)$, $(u^2 -1 + v,1, 1, \ldots) = (u^2 - 1,0, 0, \ldots) + (v, 1, 1, \ldots) \in U(R[D, C])$.
Hence, $u^2 -1 + v \in U(D)$, which implies that $u^2 -1 \in \Delta(D)$. Now, we show that $C$ is a 2-$\Delta$U ring. Let $v \in U(C)$. Then $(1, \ldots, 1, 1, v, v, \dots)\in U(R[D, C])$. By hypothesis, $(0, \ldots, 0, v^2 - 1, v^2 - 1, \dots) \in \Delta(R[D, C])$, so $(0, \ldots, 0, v^2 - 1, v^2 -1, \dots) + U(R[D, C]) \subseteq U(R[D, C])$. Thus, for all $u \in U(C)$, $(1,1, \dots, v^2 -1 + u, v^2 -1 + u, \dots) \in U(R[D, C])$. Then, we have $v^2 - 1 + u \in U(C)$ and hence $v^2 -1 \in \Delta(C)$. For the converse, assume that $D$ and $C$ are 2-$\Delta$U rings. Let $\bar{u} = (u_1, u_2, \ldots, u_n, v, v, \ldots) \in U(R[D, C])$, where $u_i \in U(D)$ and $v \in U(C) \subseteq U(D)$. We must show that $\bar{u}^{2} - 1 + U(R[D, C]) \subseteq U(R[D, C])$. In fact, for all $\bar{a} = (a_1, \ldots, a_m, b, b, \ldots) \in U(R[D, C])$ with $a_i \in U(D)$ and $b \in U(C) \subseteq U(D)$, take $z = \max \{m, n\}$, then we have, $\bar{u}^2 - 1 + \bar{a} = (u_1^2-1 + a_1, \dots, u_z^2 - 1 + a_z, v^2 - 1 + b, v^2 - 1 + b, \dots)$. Note that $u_i^2 - 1 + a_i \in U(D)$ for all $1 \leq i \leq z$ and $v^2 - 1 + b \in U(C) \subseteq U(D)$. We deduce that $\bar{u}^2 - 1 + \bar{a} \in U(R[D, C])$. Thus, $\bar{u}^2 - 1\in \Delta(R[D, C])$ or $\bar{u}^2\in 1 + \Delta(R[D, C])$. This shows that $R[D, C]$ is a 2-$\Delta$U ring.	
\end{proof}

Let $R$ be a ring and $\alpha : R \to R$ is a ring endomorphism and $R[[x; \alpha]]$ denotes the ring of skew formal power series over $R$; that is all formal power series in $x$ with coefficients from $R$ with multiplication defined by $xr = \alpha(r)x$ for all $r \in R$. In particular, $R[[x]] = R[[x; 1_R]]$ is the ring of formal power series over $R$.

\begin{proposition}\label{3.9}
A ring $R$ is 2-$\Delta$U if, and only if, so is $R[[x; \alpha]]$.
\end{proposition}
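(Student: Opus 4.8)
The plan is to use the standard correspondence between units, the Jacobson radical, and the operator $\Delta$ under the skew power series extension, reducing everything to statements already available in the literature and in the excerpt. The key structural fact I would invoke is that $J(R[[x;\alpha]]) = \{\,f = \sum a_i x^i : a_0 \in J(R)\,\}$ (this is the skew analogue of the classical description of the Jacobson radical of a power series ring; it holds because $x$ is topologically nilpotent in the $x$-adic sense), together with the consequence that $U(R[[x;\alpha]]) = \{\,f = \sum a_i x^i : a_0 \in U(R)\,\}$. From these two, one gets $R[[x;\alpha]]/J(R[[x;\alpha]]) \cong R/J(R)$, and a unit $f$ of $R[[x;\alpha]]$ has constant term a unit $a_0$ of $R$.

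First I would prove the relation $\Delta(R[[x;\alpha]]) = \{\,f = \sum a_i x^i : a_0 \in \Delta(R)\,\}$. For the inclusion $\supseteq$: if $a_0 \in \Delta(R)$ and $g = \sum b_i x^i \in U(R[[x;\alpha]])$, then $b_0 \in U(R)$, so $a_0 + b_0 \in U(R)$, whence $f + g$ has a unit constant term and therefore lies in $U(R[[x;\alpha]])$; thus $f \in \Delta(R[[x;\alpha]])$. For $\subseteq$: if $f = \sum a_i x^i \in \Delta(R[[x;\alpha]])$ and $v \in U(R)$, then $f + v \in U(R[[x;\alpha]])$ forces $a_0 + v \in U(R)$, so $a_0 \in \Delta(R)$. (Here I am using that $\Delta$ of a ring consists exactly of those elements that remain in the units after adding any unit — the defining property recalled in the abstract and in \cite{2}.)

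Next, I would combine these descriptions to finish. Suppose $R$ is 2-$\Delta U$ and let $f = \sum a_i x^i \in U(R[[x;\alpha]])$, so $a_0 \in U(R)$. Then $a_0^2 = 1 + r$ with $r \in \Delta(R)$. Now $f^2 - 1$ is a power series whose constant term is $a_0^2 - 1 = r \in \Delta(R)$, so by the description of $\Delta(R[[x;\alpha]])$ above we get $f^2 - 1 \in \Delta(R[[x;\alpha]])$, i.e. $f^2 \in 1 + \Delta(R[[x;\alpha]])$. Hence $R[[x;\alpha]]$ is 2-$\Delta U$. Conversely, if $R[[x;\alpha]]$ is 2-$\Delta U$ and $u \in U(R)$, then $u \in U(R[[x;\alpha]])$ (as a constant series), so $u^2 - 1 \in \Delta(R[[x;\alpha]])$; taking constant terms and applying the description again yields $u^2 - 1 \in \Delta(R)$, so $R$ is 2-$\Delta U$. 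Alternatively, the converse is immediate from Proposition \ref{3.4} once one knows $R \cap \Delta(R[[x;\alpha]]) \subseteq \Delta(R)$, which is part of the computation above, or from Theorem \ref{3.5} applied to the ideal $xR[[x;\alpha]] \subseteq J(R[[x;\alpha]])$ together with $R[[x;\alpha]]/xR[[x;\alpha]] \cong R$.

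The main obstacle is the lemma $\Delta(R[[x;\alpha]]) = \{f : a_0 \in \Delta(R)\}$, and underlying it the computation of $J$ and $U$ for the skew power series ring; these are the only places where one genuinely uses the structure of $R[[x;\alpha]]$ rather than a formal manipulation. In fact, once one observes that $xR[[x;\alpha]]$ is an ideal contained in $J(R[[x;\alpha]])$ with quotient $R$, the whole statement follows cleanly from Theorem \ref{3.5} and Corollary \ref{3.6}, so I expect the cleanest write-up to route through Theorem \ref{3.5} rather than re-deriving the description of $\Delta$ of the extension from scratch.
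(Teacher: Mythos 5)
Your argument is correct, but your mainline route differs from the paper's. The paper disposes of this in three lines: it takes the two-sided ideal $I=R[[x;\alpha]]x$, notes $I\subseteq J(R[[x;\alpha]])$ (indeed $J(R[[x;\alpha]])=J(R)+I$) and $R[[x;\alpha]]/I\cong R$, and then quotes Theorem \ref{3.5} — exactly the shortcut you mention in your last paragraph. Your main proof instead derives the explicit descriptions $U(R[[x;\alpha]])=\{f:\ a_0\in U(R)\}$ and $\Delta(R[[x;\alpha]])=\{f:\ a_0\in\Delta(R)\}$ and transfers the 2-$\Delta$U condition by looking at constant terms; this is longer but self-contained and yields a description of $\Delta$ of the skew power series ring that is of independent interest (it is stronger than what Theorem \ref{3.5} alone gives, and also reproves $\Delta(R)\subseteq\Delta(R[[x;\alpha]])$ directly), whereas the paper's argument buys brevity at the cost of citing the radical computation. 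Two small points of care: in your alternative remark you write the ideal as $xR[[x;\alpha]]$, but when $\alpha$ is merely an endomorphism (not surjective) this set need not be a left ideal, since $rx$ cannot in general be rewritten as $x s$; the correct two-sided ideal is $R[[x;\alpha]]x=\{f:\ a_0=0\}$, which is what the paper uses. Also, your claimed formula $J(R[[x;\alpha]])=\{f:\ a_0\in J(R)\}$ is most cleanly justified not by a topological-nilpotence heuristic but by first checking $R[[x;\alpha]]x\subseteq J(R[[x;\alpha]])$ (every $1+fx$ has unit constant term, hence is a unit) and then using $J(S)/I=J(S/I)$ for $I\subseteq J(S)$ with $S/I\cong R$; with that adjustment your computation of $U$, $J$ and $\Delta$ is sound.
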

\begin{proof}
Consider $I= R[[x; \alpha]]x$. Then $I$ is an ideal of $R[[x; \alpha]]$. Note that $J(R[[x; \alpha]])=J(R)+I$, so $I\subseteq J(R[[x; \alpha]])$. Since $R[[x; \alpha]]/I\cong R$, the result follows by Theorem \ref{3.5}.
\end{proof}

\begin{corollary}
A ring $R$ is 2-$\Delta$U if, and only if, so is $R[[x]]$.
\end{corollary}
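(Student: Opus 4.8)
The plan is to observe that this statement is nothing more than the special case $\alpha = 1_R$ of Proposition \ref{3.9}. By definition, $R[[x]] = R[[x; 1_R]]$, so instantiating Proposition \ref{3.9} at the identity endomorphism immediately yields that $R$ is 2-$\Delta U$ if, and only if, $R[[x]]$ is 2-$\Delta U$. No further work is required.

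For completeness, one can also reproduce the short argument underlying Proposition \ref{3.9} in this setting: put $I = R[[x]]x$, which is an ideal of $R[[x]]$. Using the standard fact that $J(R[[x]]) = J(R) + I$, we get $I \subseteq J(R[[x]])$, and since evaluation at $x = 0$ gives $R[[x]]/I \cong R$, Theorem \ref{3.5} applies to the pair $(R[[x]], I)$ and delivers the equivalence.

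There is no genuine obstacle here; the corollary is an immediate consequence of the material already established, and it is recorded separately only for emphasis and ease of reference.
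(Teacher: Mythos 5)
Your proposal is correct and matches the paper exactly: the corollary is stated without proof precisely because it is the special case $\alpha = 1_R$ of Proposition \ref{3.9}, and your optional unpacking (taking $I = R[[x]]x \subseteq J(R[[x]])$ with $R[[x]]/I \cong R$ and applying Theorem \ref{3.5}) is the same argument the paper uses to prove that proposition.
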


\begin{theorem}\label{3.29}
Let \( R \) be a 2-primal ring and \( \alpha \) be an endomorphism of \( R \) such that \( R \) is \( \alpha \)-compatible. The following are equivalent:
\begin{enumerate}
\item 
\( R[x; \alpha] \) is a 2-$\Delta$U ring.
\item 
\( R \) is a 2-$\Delta$U ring.
\end{enumerate}
\end{theorem}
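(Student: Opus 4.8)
The plan is to reduce everything to the reduced case by factoring out the prime radical, reusing the machinery already set up in Proposition \ref{2.10}. Recall from the proof of that proposition that, under the present hypotheses, $J(R[x;\alpha]) = \operatorname{Nil}_*(R[x;\alpha]) = \operatorname{Nil}_*(R)[x;\alpha]$, that $\bar{R} := R/\operatorname{Nil}_*(R)$ is a reduced ring carrying an induced endomorphism (still written $\alpha$) which is again compatible, and that there is a natural ring isomorphism
\[
\frac{R[x;\alpha]}{J(R[x;\alpha])} \;\cong\; \bar{R}[x;\alpha].
\]
Moreover, $\operatorname{Nil}_*(R)$ is a nil ideal, hence $\operatorname{Nil}_*(R) \subseteq J(R)$.

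The first step is the reduced case: if $S$ is reduced and $\beta$-compatible for some endomorphism $\beta$, then the computation in the proof of Proposition \ref{2.10} shows $U(S[x;\beta]) = U(S)$ and $\Delta(S[x;\beta]) = \Delta(S)$. Hence the defining condition ``$u^2 - 1 \in \Delta$ for every unit $u$'' holds for $S[x;\beta]$ precisely when it holds for $S$; that is, $S$ is 2-$\Delta$U if and only if $S[x;\beta]$ is 2-$\Delta$U.

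The second step chains equivalences for general $2$-primal $R$. Since $\operatorname{Nil}_*(R) \subseteq J(R)$, Theorem \ref{3.5} gives that $R$ is 2-$\Delta$U iff $\bar{R}$ is 2-$\Delta$U. As $\bar{R}$ is reduced, the first step gives that $\bar{R}$ is 2-$\Delta$U iff $\bar{R}[x;\alpha]$ is 2-$\Delta$U. By the displayed isomorphism $\bar{R}[x;\alpha] \cong R[x;\alpha]/J(R[x;\alpha])$, Theorem \ref{3.5} (equivalently Corollary \ref{3.6}) gives that this ring is 2-$\Delta$U iff $R[x;\alpha]$ is 2-$\Delta$U. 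Concatenating these equivalences yields the theorem.

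The only delicate point I anticipate is the bookkeeping around $\operatorname{Nil}_*(R)$ — that it is $\alpha$-invariant, so that $\bar{R}[x;\alpha]$ and the displayed isomorphism make sense, and that compatibility descends to $\bar{R}$ — but these facts are already invoked (with reference to \cite{15}) inside the proof of Proposition \ref{2.10}, so no new argument is needed. As an alternative for the implication (i) $\Rightarrow$ (ii), one may instead apply Proposition \ref{3.4} to the unital subring $R \subseteq R[x;\alpha]$: by Proposition \ref{2.10}, $R \cap \Delta(R[x;\alpha]) = R \cap \bigl(\Delta(R) + \operatorname{Nil}_*(R)[x;\alpha]\bigr) \subseteq \Delta(R) + \operatorname{Nil}_*(R) = \Delta(R)$, whence $R$ is 2-$\Delta$U.
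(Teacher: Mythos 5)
Your proof is correct, but it follows a different route from the paper's. The paper argues element-wise: for (ii)~$\Rightarrow$~(i) it takes a unit $u(x)=\sum a_ix^i$ of $R[x;\alpha]$, invokes the description of such units (constant term $a_0\in U(R)$, higher coefficients nilpotent, via \cite[Corollary 2.14]{15}), expands $(u(x))^2=a_0^2+(\text{terms with coefficients in } \operatorname{Nil}_*(R))\in 1+\Delta(R)+J(R[x;\alpha])$, and then identifies $\Delta(R)+J(R[x;\alpha])=\Delta(R[x;\alpha])$ by Proposition \ref{2.10}; for (i)~$\Rightarrow$~(ii) it restricts to $u\in U(R)$ and uses the same formula for $\Delta(R[x;\alpha])$ — essentially your ``alternative'' argument via Proposition \ref{3.4}. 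You instead run a transfer argument: mod out $\operatorname{Nil}_*(R)\subseteq J(R)$ and $J(R[x;\alpha])=\operatorname{Nil}_*(R)[x;\alpha]$, apply Theorem \ref{3.5}/Corollary \ref{3.6} on both sides, and settle the reduced case using $U(S[x;\beta])=U(S)$ and $\Delta(S[x;\beta])=\Delta(S)$ from the first half of the proof of Proposition \ref{2.10}. What your route buys is that it avoids both the explicit computation of $(u(x))^2$ and the citation of the unit description in the $2$-primal (non-reduced) case, making the statement visibly a corollary of ``$2$-$\Delta$U passes through ideals inside $J$'' plus the reduced case; what the paper's route buys is a more self-contained, element-level verification that only quotes Proposition \ref{2.10} as a black box. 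Both arguments share the same delicate background — that $\operatorname{Nil}_*(R)$ is $\alpha$-invariant, that compatibility and reducedness descend to $R/\operatorname{Nil}_*(R)$, and the isomorphism $R[x;\alpha]/J(R[x;\alpha])\cong (R/\operatorname{Nil}_*(R))[x;\alpha]$ — which you correctly flag as already implicit in the proof of Proposition \ref{2.10}, so no gap results.
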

\begin{proof}
\((ii) \Rightarrow (i)\) Let \( u(x) = a_0 + a_1 x + \dots + a_n x^n = \sum_{i=0}^n a_i x^i \) in \( U(R[x; \alpha]) \). So by \cite [Corollary 2.14]{15}, \( a_0 \in U(R) \) and \( a_i \in \operatorname{Nil}(R) \) for each \( i \geq 1 \). Then by hypothesis \( a_0^2 = 1 + r \) where \( r \in \Delta(R) \). On the other hand, we know that \(J(R[x; \alpha]) = \operatorname{Nil}_*(R[x; \alpha]) = \operatorname{Nil}_*(R)[x; \alpha] = \operatorname{Nil}(R)[x; \alpha]\).
Now, we have
	\[
	(u(x))^2 = a_0^2 + a_0 a_1 x + \dots + a_0 a_n x^n + a_1 x a_0 + \dots = (1 + r) + a_0 a_1 x + \dots
	\]
\[
	  = 1 + (r + a_0 a_1 x + \dots) \in 1 + \Delta(R) + J(R[x; \alpha]).
\]
On the other hand, we have $\Delta(R)+J(R[x; \alpha])=\Delta(R[x; \alpha])$ by Proposition \ref{2.10}. Then, this shows that \( R[x; \alpha] \) is a 2-\(\Delta U\) ring.\\
\((i) \Rightarrow (ii)\) Let \( u \in U(R) \subseteq U(R[x; \alpha]) \). Then \( u^2 \in 1 + \Delta(R[x; \alpha]) = 1 + \Delta(R) + J(R[x; \alpha]) \). Thus, we have \( u^2 \in 1 + \Delta(R) \) and hence \( R \) is a 2-$\Delta$U ring.
\end{proof}

\begin{corollary}\label{3.30}
Let \( R \) be a \(2\)-primal ring. Then, the following are equivalent:
\begin{enumerate}
\item 
\( R[x] \) is a 2-$\Delta$U ring.
\item
\( R \) is a 2-$\Delta$U ring.
\end{enumerate}
\end{corollary}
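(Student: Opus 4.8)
The plan is to obtain this as the immediate specialization of Theorem \ref{3.29} to the trivial twist. First I would record two identifications. One: by the conventions fixed just before Proposition \ref{2.10}, the ordinary polynomial ring is the skew polynomial ring with the identity endomorphism, i.e. $R[x] = R[x; 1_R]$. Two: the identity endomorphism $1_R$ is always $1_R$-compatible, since for all $a,b\in R$ the equivalence $ab = 0 \Longleftrightarrow a\,1_R(b) = 0$ is a tautology (as $1_R(b) = b$). Hence, whenever $R$ is $2$-primal, the hypotheses of Theorem \ref{3.29} are satisfied with $\alpha = 1_R$.

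With these in hand, I would simply apply Theorem \ref{3.29} to $\alpha = 1_R$: it gives that $R[x;1_R] = R[x]$ is $2$-$\Delta$U if, and only if, $R$ is $2$-$\Delta$U, which is exactly the asserted equivalence. There is essentially no obstacle to overcome: all the substantive work — the description $\Delta(R[x;\alpha]) = \Delta(R) + J(R[x;\alpha])$ from Proposition \ref{2.10}, together with the unit-structure input for skew polynomial rings over $2$-primal $\alpha$-compatible rings — has already been carried out inside the proof of Theorem \ref{3.29}. The only thing the corollary adds is the remark that the identity map automatically meets the compatibility hypothesis, so no extra assumption on $R$ beyond $2$-primality is needed. (If a self-contained proof were desired, one would just reproduce the argument of Theorem \ref{3.29} verbatim with every occurrence of $\alpha$ deleted, using $J(R[x]) = \operatorname{Nil}_*(R)[x]$ and \cite[Corollary 2.14]{15} in the untwisted form; but citing the theorem is cleaner.)
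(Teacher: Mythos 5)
Your proposal is correct and matches the paper's (implicit) argument: the corollary is stated without proof precisely because it is the specialization of Theorem \ref{3.29} to $\alpha = 1_R$, using $R[x] = R[x;1_R]$ and the tautological $1_R$-compatibility. Nothing further is needed.
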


Let $R$ be a ring and $M$ a bi-modulo over $R$. The trivial extension of $R$ and $M$ is defined as
\[ T(R, M) = \{(r, m) : r \in R \text{ and } m \in M\}, \]
with addition defined componentwise and multiplication defined by
\[ (r, m)(s, n) = (rs, rn + ms). \]
The trivial extension $T(R, M)$ is isomorphic to the subring
\[ \left\{ \begin{pmatrix} r & m \\ 0 & r \end{pmatrix} : r \in R \text{ and } m \in M \right\} \]
of the formal $2 \times 2$ matrix ring $\begin{pmatrix} R & M \\ 0 & R \end{pmatrix}$, and also $T(R, R) \cong R[x]/\left\langle x^2 \right\rangle$. We also note that the set of units of the trivial extension $T(R, M)$ is
\[ U(T(R, M)) = T(U(R), M). \]
Also by \cite {7}, we have 
\[ \Delta(T(R, M)) = T(\Delta(R), M). \]

\begin{proposition}\label{4.5}
Let $R$ be a ring and $M$ a bi-modulo over $R$. Then, the following hold:
\begin{enumerate}
\item
The trivial extension ${\rm T}(R, M)$ is a 2-$\Delta U$ ring if, and only if, $R$ is a 2-$\Delta U$ ring.
\item
For $n \geq 2$, the quotient-ring $\dfrac{R[x; \alpha]}{\langle x^n\rangle}$ is a 2-$\Delta U$ ring if, and only if, $R$ is a 2-$\Delta U$ ring.
\item
For $n \geq 2$, the quotient-ring $\dfrac{R[[x; \alpha]]}{\langle x^n\rangle}$ is a 2-$\Delta U$ ring if, and only if, $R$ is a 2-$\Delta U$ ring.
\item
The upper triangular matrix ring $T_n(R)$ is a 2-$\Delta U$ if, and only if, $R$ is a 2-$\Delta U$ ring.
\end{enumerate}
\end{proposition}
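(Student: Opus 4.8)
The plan is to handle all four parts by one and the same device: in each case I would produce a nilpotent two-sided ideal $I$ of the ambient ring, so that automatically $I \subseteq J$ of that ring, and such that the quotient by $I$ is either $R$ itself or a finite direct power of $R$; then I would invoke Theorem \ref{3.5} (for the passage to a quotient by an ideal inside the Jacobson radical) together with Proposition \ref{3.1} (for the direct power). Each of the four equivalences thereby reduces to three routine checks: exhibit $I$, verify $I^k = 0$ for a suitable $k$, and identify the quotient ring.

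First, for (i) I would take $I = \{(0,m) : m \in M\} \subseteq {\rm T}(R,M)$. From $(r,m)(s,n) = (rs,\, rn+ms)$ one gets $I^2 = 0$, so $I \subseteq J({\rm T}(R,M))$, while $(r,m) \mapsto r$ gives ${\rm T}(R,M)/I \cong R$; Theorem \ref{3.5} then yields the stated equivalence. For (ii), in $S = R[x;\alpha]/\langle x^n\rangle$ I would let $I$ be the image of $R[x;\alpha]x$, the two-sided ideal of skew polynomials with zero constant term --- writing it as $R[x;\alpha]x$ makes it manifestly two-sided without assuming $\alpha$ surjective. Each element of $I$ has a representative of the form $a_1 x + \dots + a_{n-1}x^{n-1}$, so $I^n = 0$, hence $I \subseteq J(S)$, and $S/I \cong R[x;\alpha]/\langle x\rangle \cong R$; apply Theorem \ref{3.5} again. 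Part (iii) I would deduce from (ii) via the ring isomorphism $R[[x;\alpha]]/\langle x^n\rangle \cong R[x;\alpha]/\langle x^n\rangle$ obtained by truncating at degree $n$ (here one checks $\langle x^n\rangle = R[[x;\alpha]]x^n$, which is two-sided because $x^n r = \alpha^n(r)x^n$), or equivalently rerun the truncation argument directly. Finally, for (iv) I would take $I \subseteq {\rm T}_n(R)$ to be the strictly upper triangular matrices; then $I^n = 0$, so $I \subseteq J({\rm T}_n(R))$, and projecting a matrix onto its diagonal gives ${\rm T}_n(R)/I \cong R \times \cdots \times R$ ($n$ copies); Proposition \ref{3.1} says this product is 2-$\Delta U$ precisely when $R$ is, and Theorem \ref{3.5} transfers the conclusion back to ${\rm T}_n(R)$.

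I do not anticipate a genuine obstacle here: once each $I$ is described correctly, its nilpotency --- and hence its containment in the Jacobson radical --- is immediate, and the quotient identifications are standard. The only point that needs a little care is bookkeeping in the skew setting: one should describe the relevant ideal as $R[x;\alpha]x$ rather than $xR[x;\alpha]$ (which need not even be two-sided when $\alpha$ is not onto), and one should confirm that $\langle x^n\rangle$ equals $R[x;\alpha]x^n$ (respectively $R[[x;\alpha]]x^n$), so that the quotient is genuinely the degree-$(<n)$ truncation of the skew polynomial or power-series ring.
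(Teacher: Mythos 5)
Your proposal is correct and follows essentially the same route as the paper: in each case exhibit a (nilpotent) ideal contained in the Jacobson radical with quotient $R$ (or $R^n$ for the triangular matrices), then apply Theorem \ref{3.5} together with Proposition \ref{3.1}. Your extra care about nilpotency and the two-sidedness of $R[x;\alpha]x$ in the skew setting only makes explicit what the paper leaves as routine verification.
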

\begin{proof}
\begin{enumerate}
\item
Set $A={\rm T}(R, M)$ and consider $I:={\rm T}(0, M)$. It is not too hard to verify that $I\subseteq J(A)$ such that $\dfrac{A}{I} \cong R$. So, the result follows directly from Theorem \ref{3.5}.
\item
Put $A=\dfrac{R[x; \alpha]}{\langle x^n\rangle}$. Considering $I:=\dfrac{\langle x\rangle}{\langle x^n\rangle}$, we obtain that $I\subseteq J(A)$ such that $\dfrac{A}{I} \cong R$. So, the result follows automatically Theorem \ref{3.5}.
\item
Knowing that the isomorphism $\dfrac{R[x; \alpha]}{\langle x^n\rangle} \cong \dfrac{R[[x; \alpha]]}{\langle x^n\rangle}$ is true,  (iii) follows automatically from (ii).
\item
Let $I=\left\lbrace (a_{ij})\in T_{n}(R)\, | \, a_{ii}=0\right\rbrace$. Then, we have $I\subseteq J(T_{n}(R))$ with $T_{n}(R)/I\cong R^n$. Therefore, the desired result follows from Theorem \ref{3.5} and Proposition \ref{3.1}.
\end{enumerate}
\end{proof}

\begin{corollary}\label{4.6}
Let $R$ be a ring. Then, the following are equivalent:
\begin{enumerate}
\item 
$R$ is a 2-$\Delta$U ring.
\item
For $n \geq 2$, the quotient-ring $\dfrac{R[x]}{\langle x^n\rangle}$ is a 2-$\Delta U$ ring.
\item
For $n \geq 2$, the quotient-ring $\dfrac{R[[x]]}{\langle x^n\rangle}$ is a 2-$\Delta U$ ring.
\end{enumerate}
\end{corollary}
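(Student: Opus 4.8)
The plan is to deduce this immediately from Proposition \ref{4.5}, specialized to the identity endomorphism. Recall that by definition $R[x] = R[x; 1_R]$ and $R[[x]] = R[[x; 1_R]]$, where $1_R$ is the identity endomorphism of $R$. Since the identity map is trivially a ring endomorphism of $R$, all the hypotheses of parts (ii) and (iii) of Proposition \ref{4.5} are satisfied with $\alpha = 1_R$, and no further work is needed.

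Concretely, I would argue as follows. The equivalence $(i) \Leftrightarrow (ii)$ is exactly Proposition \ref{4.5}(ii) applied with $\alpha = 1_R$, since then $\dfrac{R[x;\alpha]}{\langle x^n\rangle} = \dfrac{R[x]}{\langle x^n\rangle}$ for every $n \geq 2$. Likewise, the equivalence $(i) \Leftrightarrow (iii)$ is Proposition \ref{4.5}(iii) applied with $\alpha = 1_R$, giving $\dfrac{R[[x;\alpha]]}{\langle x^n\rangle} = \dfrac{R[[x]]}{\langle x^n\rangle}$ for every $n \geq 2$. Chaining these two equivalences through $(i)$ yields $(ii) \Leftrightarrow (iii)$ as well, completing the cycle.

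There is essentially no obstacle here: the corollary is a pure specialization, and the only thing to be careful about is making explicit that $1_R$ is an admissible choice of $\alpha$ and that the skew constructions collapse to the ordinary polynomial and power series constructions in that case. One could alternatively give a self-contained one-line proof by noting that in each case the kernel ideal $I = \langle x\rangle/\langle x^n\rangle$ sits inside the Jacobson radical with $\dfrac{A}{I}\cong R$, and then invoking Theorem \ref{3.5} directly, but citing Proposition \ref{4.5} is the cleanest route.
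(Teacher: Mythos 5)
Your proposal is correct and matches the paper's intended argument: Corollary \ref{4.6} is exactly Proposition \ref{4.5}(ii) and (iii) specialized to $\alpha = 1_R$, under which the skew constructions reduce to $R[x]/\langle x^n\rangle$ and $R[[x]]/\langle x^n\rangle$. The paper leaves this specialization implicit, so no further comment is needed.
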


\begin{example}
The upper triangular ring $T_{n}(\mathbb{Z}_3)$ for all $n\geq 1$ is 2-$\Delta U$ by Proposition \ref{4.5}(4) and Example \ref{3.32}. But, it is not $\Delta U$ by Example \ref{3.32} and \cite [Corollary 2.9]{7}.
\end{example}

Let $R$  be a ring and $M$  a  bi-modulo over $R$. Let $${\rm DT}(R,M) := \{ (a, m, b, n) | a, b \in R, m, n \in M \}$$ with addition defined componentwise and multiplication defined by $$(a_1, m_1, b_1, n_1)(a_2, m_2, b_2, n_2) = (a_1a_2, a_1m_2 + m_1a_2, a_1b_2 + b_1a_2, a_1n_2 + m_1b_2 + b_1m_2 +n_1a_2).$$ Then, ${\rm DT}(R,M)$ is a ring which is isomorphic to ${\rm T}({\rm T}(R, M), {\rm T}(R, M))$. Also, we have $${\rm DT}(R, M) =
\left\{\begin{pmatrix}
	a &m &b &n\\
	0 &a &0 &b\\
	0 &0 &a &m\\
	0 &0 &0 &a
\end{pmatrix} |  a,b \in R, m,n \in M\right\}.$$ We have the following isomorphism as rings: $\dfrac{R[x, y]}{\langle x^2, y^2\rangle} \rightarrow {\rm DT}(R, R)$ defined by $$a + bx + cy + dxy \mapsto
\begin{pmatrix}
	a &b &c &d\\
	0 &a &0 &c\\
	0 &0 &a &b\\
	0 &0 &0 &a
\end{pmatrix}.$$

We, thereby, detect the following.

\begin{corollary}
Let $R$ be a ring and $M$ a bi-modulo over $R$. Then the following statements are equivalent:
\begin{enumerate}
\item
$R$ is a 2-$\Delta U$ ring.
\item
${\rm DT}(R, M)$ is a 2-$\Delta U$ ring.
\item
${\rm DT}(R, R)$ is a 2-$\Delta U$ ring.
\item
$\dfrac{R[x, y]}{\langle x^2, y^2\rangle}$ is a 2-$\Delta U$ ring.
\end{enumerate}
\end{corollary}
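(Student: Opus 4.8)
The plan is to deduce all three equivalences from Proposition \ref{4.5}(1), applied twice, together with the two ring isomorphisms recorded just before the statement.

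For $(i)\Leftrightarrow(ii)$ I would start from the isomorphism ${\rm DT}(R,M)\cong {\rm T}({\rm T}(R,M),{\rm T}(R,M))$ noted above. By Proposition \ref{4.5}(1), the trivial extension ${\rm T}\bigl({\rm T}(R,M),{\rm T}(R,M)\bigr)$ is $2$-$\Delta U$ if, and only if, its base ring ${\rm T}(R,M)$ is $2$-$\Delta U$; applying Proposition \ref{4.5}(1) a second time, ${\rm T}(R,M)$ is $2$-$\Delta U$ if, and only if, $R$ is $2$-$\Delta U$. Chaining these gives $(i)\Leftrightarrow(ii)$. The equivalence $(i)\Leftrightarrow(iii)$ is then just the special case $M=R$ of what has been proved.

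For $(iii)\Leftrightarrow(iv)$ I would invoke the ring isomorphism $\dfrac{R[x,y]}{\langle x^2,y^2\rangle}\cong {\rm DT}(R,R)$ displayed above, noting that the $2$-$\Delta U$ property is an isomorphism invariant, being defined solely through $U(\cdot)$ and $\Delta(\cdot)$, both of which are preserved by ring isomorphisms.

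I expect no serious obstacle here; the only points that would need care in a fully written-out proof are checking that the two displayed correspondences really are ring isomorphisms, namely that $(a,m,b,n)\mapsto\bigl((a,m),(b,n)\bigr)$ is compatible with the stated multiplication on ${\rm DT}(R,M)$, and that the map $a+bx+cy+dxy\mapsto {\rm DT}(R,R)$ respects the relations $x^2=y^2=0$ (and $xy=yx$) forced by the quotient. Both are routine matrix computations already indicated in the text preceding the corollary, so I would simply cite them and let the conclusion follow immediately from Proposition \ref{4.5}(1).
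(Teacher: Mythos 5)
Your proposal is correct and is essentially the argument the paper intends: the corollary is stated without an explicit proof precisely because it follows from the displayed isomorphisms ${\rm DT}(R,M)\cong {\rm T}({\rm T}(R,M),{\rm T}(R,M))$ and $R[x,y]/\langle x^2,y^2\rangle\cong{\rm DT}(R,R)$ together with a double application of Proposition \ref{4.5}(1), exactly as you describe.
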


Let $A$, $B$ be two rings and $M$, $N$ be $(A,B)$-bi-modulo and $(B,A)$-bi-modulo, respectively. Also, we consider the bi-linear maps $\phi :M\otimes_{B}N\rightarrow A$ and $\psi:N\otimes_{A}M\rightarrow B$ that apply to the following properties.
$$Id_{M}\otimes_{B}\psi =\phi \otimes_{A}Id_{M},Id_{N}\otimes_{A}\phi =\psi \otimes_{B}Id_{N}.$$
For $m\in M$ and $n\in N$, define $mn:=\phi (m\otimes n)$ and $nm:=\psi (n\otimes m)$. Now the $4$-tuple $R=\begin{pmatrix}
	A & M\\
	N & B
\end{pmatrix}$ becomes to an associative ring with obvious matrix operations that is called a {\it Morita context ring}. Denote two-side ideals $Im \phi$ and $Im \psi$ to $MN$ and $NM$, respectively, that are called the {\it trace ideals} of the {\it Morita context ring}.

\begin{proposition}\label{4.7}
Let $R=\left(\begin{array}{ll}A & M \\ N & B\end{array}\right)$ be a Morita context ring such that $MN \subseteq J(A)$ and $NM\subseteq J(B)$. Then, $R$ is a 2-$\Delta U$ ring if, and only if, both $A$ and $B$ are 2-$\Delta U$.
\end{proposition}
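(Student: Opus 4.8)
The plan is to reduce the statement to Theorem \ref{3.5} by exhibiting a suitable ideal contained in the Jacobson radical of $R$, exactly as in the proof of Proposition \ref{4.7}'s relatives earlier in the paper. First I would recall the standard fact that for a Morita context ring $R=\left(\begin{smallmatrix}A & M \\ N & B\end{smallmatrix}\right)$ with $MN\subseteq J(A)$ and $NM\subseteq J(B)$, the subset $I=\left(\begin{smallmatrix}MN & M \\ N & NM\end{smallmatrix}\right)$ is an ideal of $R$ with $I\subseteq J(R)$, and $R/I\cong (A/MN)\times(B/NM)$. This is a known computation (the hypotheses on the trace ideals are precisely what make $I$ an ideal sitting inside the radical); I would cite it or sketch the one-line verification that $I$ is closed under the matrix multiplication and that $1-x$ is invertible for every $x\in I$.

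Next, by Theorem \ref{3.5}, $R$ is $2$-$\Delta U$ if and only if $R/I$ is $2$-$\Delta U$, i.e., if and only if $(A/MN)\times(B/NM)$ is $2$-$\Delta U$. By Proposition \ref{3.1}, the latter holds if and only if both $A/MN$ and $B/NM$ are $2$-$\Delta U$. Finally, since $MN\subseteq J(A)$ and $NM\subseteq J(B)$, another application of Theorem \ref{3.5} (now to the ideals $MN\trianglelefteq A$ and $NM\trianglelefteq B$) gives that $A/MN$ is $2$-$\Delta U$ iff $A$ is, and $B/NM$ is $2$-$\Delta U$ iff $B$ is. Chaining these equivalences yields the claim.

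The main obstacle is purely bookkeeping: verifying carefully that $I$ as defined is genuinely a two-sided ideal of $R$ and that $I\subseteq J(R)$. The ideal property requires the compatibility/associativity conditions on $\phi$ and $\psi$ (so that products like $M\cdot N\subseteq MN$ land where expected), and the containment $I\subseteq J(R)$ uses that $MN\subseteq J(A)$, $NM\subseteq J(B)$ together with the fact that an element of $M$ or $N$ is "topologically nilpotent" relative to $I$ — concretely, one shows $1-x$ is a unit in $R$ for all $x\in I$ by writing out the $2\times 2$ inverse and using that the corner entries $MN$, $NM$ are quasi-regular in $A$, $B$ respectively. Once $I$ is in hand, everything else is a formal consequence of Theorems \ref{3.5} and Proposition \ref{3.1}. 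I would also remark that this recovers Proposition \ref{4.5}(1) by taking $N=0$ and $A=B=R$, $M$ the given bimodule.
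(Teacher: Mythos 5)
Your argument is correct, and it follows the same basic strategy as the paper: pass to a quotient of $R$ by an ideal contained in $J(R)$ that splits the Morita context into a product of (quotients of) the corner rings, then invoke Proposition \ref{3.1} together with Theorem \ref{3.5}/Corollary \ref{3.6}. The only real difference is in how that ideal is produced. The paper cites \cite[Lemma 3.1]{26} to get the exact description $J(R)=\left(\begin{smallmatrix} J(A) & M \\ N & J(B)\end{smallmatrix}\right)$, so that $R/J(R)\cong A/J(A)\times B/J(B)$ and one application of Corollary \ref{3.6} on each side finishes the proof; you instead take the smaller ideal $I=\left(\begin{smallmatrix} MN & M \\ N & NM\end{smallmatrix}\right)$, verify by hand (via the Schur-complement inversion of $1-z$, using $MN\subseteq J(A)$, $NM\subseteq J(B)$ and the fact that $MN$, $NM$ are the trace ideals) that $I$ is an ideal with $I\subseteq J(R)$ and $R/I\cong (A/MN)\times(B/NM)$, and then apply Theorem \ref{3.5} once more to each factor. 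Your route is more self-contained (no external lemma, and you never need the full computation of $J(R)$), at the cost of the explicit quasi-regularity check and one extra application of Theorem \ref{3.5}; the paper's route is shorter but leans on the cited radical formula. One small correction to your closing remark: taking $N=0$ and $A=B=R$ specializes Proposition \ref{4.7} to the formal triangular matrix ring $\left(\begin{smallmatrix} R & M \\ 0 & R\end{smallmatrix}\right)$, i.e.\ to Corollary \ref{4.8}, not to the trivial extension ${\rm T}(R,M)$ of Proposition \ref{4.5}(i), which is the proper subring of that triangular ring with equal diagonal entries.
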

\begin{proof}
In view of \cite [Lemma 3.1]{26}, we argue that $J(R)=\begin{pmatrix}
	J(A) & M \\
	N & J(B)
\end{pmatrix}$ and hence $\dfrac{R}{J(R)}\cong \dfrac{A}{J(A)}\times \dfrac{B}{J(B)}$. Then, the result follows from Corollary \ref{3.6} and Proposition \ref{3.1}.
\end{proof}

Now, let $R$, $S$ be two rings, and let $M$ be an $(R,S)$-bi-modulo such that the operation $(rm)s = r(ms$) is valid for all $r \in R$, $m \in M$ and $s \in S$. Given such a bi-modulo $M$, we can set

$$
{\rm T}(R, S, M) =
\begin{pmatrix}
	R& M \\
	0& S
\end{pmatrix}
=
\left\{
\begin{pmatrix}
	r& m \\
	0& s
\end{pmatrix}
: r \in R, m \in M, s \in S
\right\},
$$
where it forms a ring with the usual matrix operations. The so-stated formal matrix ${\rm T}(R, S, M)$ is called a {\it formal triangular matrix ring}. In Proposition \ref{4.7}, if we set $N=0$, then we will obtain the following corollary.

\begin{corollary}\label{4.8}
Let $R,S$ be rings and let $M$ be an $(R,S)$-bi-modulo. Then, the formal triangular matrix ring ${\rm T}(R,S,M)$ is a 2-$\Delta U$ ring if, and only if, both $R$ and $S$ are 2-$\Delta U$.
\end{corollary}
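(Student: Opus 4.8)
The plan is to deduce Corollary \ref{4.8} as a direct specialization of Proposition \ref{4.7}. The formal triangular matrix ring ${\rm T}(R,S,M)$ is precisely the Morita context ring $\left(\begin{smallmatrix} R & M \\ N & S \end{smallmatrix}\right)$ with $N=0$ and with both bi-linear maps $\phi\colon M\otimes_S N\to R$ and $\psi\colon N\otimes_R M\to S$ equal to the zero maps (there is nothing else they can be, since $N=0$). The compatibility conditions on $\phi$ and $\psi$ are then trivially satisfied, so ${\rm T}(R,S,M)$ genuinely is an instance of a Morita context ring in the sense defined just before Proposition \ref{4.7}.

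First I would observe that the trace ideals $MN = \operatorname{Im}\phi$ and $NM = \operatorname{Im}\psi$ are both zero when $N=0$. Hence the hypotheses $MN\subseteq J(R)$ and $NM\subseteq J(S)$ of Proposition \ref{4.7} hold automatically, since $0$ is contained in every Jacobson radical. Applying Proposition \ref{4.7} with $A=R$ and $B=S$ then yields immediately that ${\rm T}(R,S,M)$ is $2$-$\Delta U$ if, and only if, both $R$ and $S$ are $2$-$\Delta U$, which is exactly the assertion of the corollary.

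There is essentially no obstacle here; the only thing worth spelling out is the identification of ${\rm T}(R,S,M)$ with the $N=0$ Morita context, so that the reader sees the corollary is not a new argument but a clean instantiation of the preceding proposition. For completeness one could alternatively note that this also follows from Proposition \ref{4.5}(4) in the special case where the off-diagonal entries form a single bi-module rather than iterated triangular blocks, but the Morita-context route is the most economical and is the one already flagged in the sentence preceding the statement. I would therefore keep the proof to one or two sentences: set $N=0$ in Proposition \ref{4.7}, note $MN=NM=0\subseteq J(\,\cdot\,)$, and conclude.

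\begin{proof}
Regard the formal triangular matrix ring ${\rm T}(R,S,M)$ as the Morita context ring $\left(\begin{smallmatrix} R & M \\ N & S\end{smallmatrix}\right)$ with $N=0$; then necessarily the bi-linear maps $\phi$ and $\psi$ are zero, and the compatibility conditions hold trivially. Consequently the trace ideals satisfy $MN=0\subseteq J(R)$ and $NM=0\subseteq J(S)$, so the hypotheses of Proposition \ref{4.7} are fulfilled. By that proposition, ${\rm T}(R,S,M)$ is a 2-$\Delta U$ ring if, and only if, both $R$ and $S$ are 2-$\Delta U$, as claimed.
\end{proof}
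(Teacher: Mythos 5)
Your proof is correct and follows exactly the route the paper itself indicates: the corollary is obtained by setting $N=0$ in Proposition \ref{4.7}, where the trace ideals become zero so the hypotheses $MN\subseteq J(R)$ and $NM\subseteq J(S)$ hold trivially. Your write-up simply makes this specialization explicit, which matches the paper's intent.
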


Given a ring $R$ and a central element $s$ of $R$, the $4$-tuple $\begin{pmatrix}
	R & R\\
	R & R
\end{pmatrix}$ becomes a ring with addition component-wise and with multiplication defined by
$$\begin{pmatrix}
	a_{1} & x_{1}\\
	y_{1} & b_{1}
\end{pmatrix}\begin{pmatrix}
	a_{2} & x_{2}\\
	y_{2} & b_{2}
\end{pmatrix}=\begin{pmatrix}
	a_{1}a_{2}+sx_{1}y_{2} & a_{1}x_{2}+x_{1}b_{2} \\
	y_{1}a_{2}+b_{1}y_{2} & sy_{1}x_{2}+b_{1}b_{2}
\end{pmatrix}.$$
This ring is denoted by $K_s(R)$. A {\it Morita context}
$\begin{pmatrix}
	A & M\\
	N & B
\end{pmatrix}$ with $A=B=M=N=R$ is called a {\it generalized matrix ring} over $R$. It was observed by Krylov in \cite{18} that a ring $S$ is a generalized matrix ring over $R$ if, and only if, $S=K_s(R)$ for some $s\in C(R)$. Here $MN=NM=sR$, so $MN\subseteq J(A)\Longleftrightarrow s\in J(R)$, $NM\subseteq J(B)\Longleftrightarrow s\in J(R)$.

\begin{corollary}\label{4.9}
Let $R$ be a ring and $s\in C(R)\cap J(R)$. Then, $K_s(R)$ is a 2-$\Delta U$ ring if, and only if, $R$ is 2-$\Delta U$.
\end{corollary}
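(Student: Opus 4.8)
The plan is to reduce Corollary \ref{4.9} to Proposition \ref{4.7}, which is the Morita context version already proven. The key observation is that the generalized matrix ring $K_s(R)$ is precisely the Morita context ring $\begin{pmatrix} R & R \\ R & R \end{pmatrix}$ with bi-modulo structures $M=N=R$ and bilinear maps determined by the central element $s$: namely $mn = smn$ and $nm = snm$ (ordinary products scaled by $s$), so that the trace ideals are $MN = NM = sR$. This is exactly Krylov's identification cited in the paragraph preceding the statement.

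First I would note that since $s \in J(R)$ and $J(R)$ is an ideal, we have $sR \subseteq J(R)$. Hence $MN = sR \subseteq J(R) = J(A)$ (with $A = R$) and likewise $NM = sR \subseteq J(R) = J(B)$ (with $B = R$). This verifies the two hypotheses $MN \subseteq J(A)$ and $NM \subseteq J(B)$ required by Proposition \ref{4.7}.

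Then I would simply apply Proposition \ref{4.7} to conclude that $K_s(R)$ is a 2-$\Delta U$ ring if, and only if, both copies of $R$ (playing the roles of $A$ and $B$) are 2-$\Delta U$, i.e. if and only if $R$ is 2-$\Delta U$. The one point worth stating explicitly is that $K_s(R)$ really does satisfy the compatibility conditions $Id_M \otimes_B \psi = \phi \otimes_A Id_M$ and $Id_N \otimes_A \phi = \psi \otimes_B Id_N$ needed to be a legitimate Morita context; this follows from associativity of multiplication in $R$ together with the centrality of $s$, and is part of Krylov's observation already invoked in the text.

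There is essentially no obstacle here: the corollary is a direct specialization. The only thing one must be slightly careful about is checking that $s \in J(R)$ (rather than merely $s \in C(R)$) is exactly what forces the trace ideals into the Jacobson radicals, which is why the hypothesis reads $s \in C(R) \cap J(R)$; the remark immediately before the statement already records the equivalences $MN \subseteq J(A) \Longleftrightarrow s \in J(R)$ and $NM \subseteq J(B) \Longleftrightarrow s \in J(R)$, so the proof is a one-line invocation of Proposition \ref{4.7}.
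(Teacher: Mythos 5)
Your proof is correct and follows exactly the route the paper intends: identify $K_s(R)$ with the Morita context having $A=B=M=N=R$ and trace ideals $MN=NM=sR\subseteq J(R)$, then invoke Proposition \ref{4.7}. This matches the paper's (implicit) argument, which relies on the same Krylov identification recorded in the paragraph preceding the corollary.
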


Following Tang and Zhou (cf. \cite{19}), for $n\geq 2$ and for $s\in C(R)$, the $n\times n$ formal matrix ring over $R$ defined by $s$, and denoted by $M_{n}(R;s)$, is the set of all $n\times n$ matrices over $R$ with usual addition of matrices and with multiplication defined below:

\noindent For $(a_{ij})$ and $(b_{ij})$ in ${\rm M}_{n}(R;s)$,
$$(a_{ij})(b_{ij})=(c_{ij}), \quad \text{where} ~~ (c_{ij})=\sum s^{\delta_{ikj}}a_{ik}b_{kj}.$$
Here, $\delta_{ijk}=1+\delta_{ik}-\delta_{ij}-\delta_{jk}$, where $\delta_{jk}$, $\delta_{ij}$, $\delta_{ik}$ are the Kroncker delta symbols.

\begin{corollary}\label{4.10}
Let $R$ be a ring and $s\in C(R)\cap J(R)$. Then, $M_{n}(R;s)$ is a 2-$\Delta U$ ring if, and only if, $R$ is 2-$\Delta U$.
\end{corollary}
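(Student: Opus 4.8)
The natural strategy is to reduce the assertion to Corollary \ref{4.9}, exactly as the run of corollaries preceding it has reduced everything to the basic quotient lemma, Theorem \ref{3.5}. The key structural fact we need is a description of the Jacobson radical of $M_n(R;s)$ when $s \in C(R) \cap J(R)$. First I would recall, citing Tang--Zhou \cite{19}, that for $s \in J(R)$ one has
\[
J(M_n(R;s)) = M_n(J(R);s),
\]
the set of all $n \times n$ formal matrices (with the twisted multiplication) whose entries lie in $J(R)$; this is the analogue of the familiar fact $J(M_n(R)) = M_n(J(R))$, and the twisting by powers of $s$ does not interfere because $s$ is central and already in $J(R)$. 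Granting this, the quotient
\[
\frac{M_n(R;s)}{J(M_n(R;s))} \cong M_n\!\left(\frac{R}{J(R)};\,\bar s\right)
\]
and, since $s \in J(R)$, the image $\bar s$ is $0$ in $R/J(R)$, so the twisted multiplication degenerates to the ordinary one: $M_n(R/J(R);\bar s) = M_n(R/J(R);0) \cong M_n(R/J(R))$ when $n\ge 2$, and for $n=1$ this is just $R/J(R)$.

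With this in hand the argument splits in the obvious way. For $n \ge 2$: if $R$ is $2$-$\Delta U$ then by Corollary \ref{3.6} so is $R/J(R)$, but then $M_n(R/J(R))$ is a full matrix ring of size $\ge 2$ over a nonzero ring, which by Proposition \ref{3.8} fails to be $2$-$\Delta U$ unless $R/J(R) = 0$, i.e. $R = 0$; and conversely, if $M_n(R;s)$ is $2$-$\Delta U$ then its corner ring $R$ (embedded as $r \mapsto rE_{11} + \cdots$, or more cleanly via $e_{11}M_n(R;s)e_{11}\cong R$) is $2$-$\Delta U$ by Proposition \ref{3.7}. Actually the cleanest route is to observe that $M_n(R;s)$ is a Morita context ring — indeed a generalized matrix ring in the $n=2$ case — so that one can invoke Corollary \ref{4.9} directly once the radical computation is set up; for general $n$ one iterates, writing $M_n(R;s)$ with a corner isomorphic to $M_{n-1}(R;s)$ and another corner isomorphic to $R$, and applies Propositions \ref{3.7} and \ref{3.8} together with the fact that a ring containing $M_2$ of a nonzero ring as a corner cannot be $2$-$\Delta U$. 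In either formulation the conclusion is that $M_n(R;s)$ is $2$-$\Delta U$ if and only if $R$ is.

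The main obstacle, and the only place requiring genuine care, is the radical identity $J(M_n(R;s)) = M_n(J(R);s)$: one must check that the twisting exponents $\delta_{ijk}$ never produce a unit where the untwisted product would produce a non-unit, and that quasi-regularity is preserved entrywise. This is where the hypothesis $s \in C(R) \cap J(R)$ is essential — centrality lets the powers of $s$ be pulled out of products, and $s \in J(R)$ guarantees that the off-diagonal twisting factors $s^{\delta_{iik}} = s$ lie in $J(R)$, so that $M_n(J(R);s)$ is a two-sided ideal that is nil-adically well-behaved. Once this lemma is granted (and it is available from \cite{19} in the form stated there), the rest is the routine reduction modulo $J(R)$ followed by the already-established $n \ge 2$ matrix obstruction of Proposition \ref{3.8}.
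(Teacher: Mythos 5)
There is a genuine gap, and it sits exactly where you locate the ``only place requiring genuine care.'' Your proposed radical identity $J(M_n(R;s)) = M_n(J(R);s)$ is false, and so is the ensuing identification of the quotient with an ordinary matrix ring. When $s\in J(R)$ the correct description (this is the Tang--Li--Zhou lemma the paper cites in Proposition \ref{4.7}) has \emph{full} off-diagonal blocks: already for $n=2$, $J(K_s(R))=\begin{pmatrix} J(R) & R\\ R & J(R)\end{pmatrix}$, not $K_s(J(R))$ --- take $R$ a field and $s=0$ to see the difference. Likewise $M_n(R;s)$ with $s=0$ is \emph{not} the ordinary matrix ring $M_n(R)$; for $n=2$ it is the trivial Morita context, i.e.\ essentially a trivial extension of $R\times R$, which is 2-$\Delta U$ precisely when $R$ is (Corollary \ref{4.11}). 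Your claimed degeneration $M_n(R/J(R);\bar s)\cong M_n(R/J(R))$ therefore fails, and the conclusion you draw from it --- that for $n\ge 2$ the ring $M_n(R;s)$ can be 2-$\Delta U$ only if $R=0$, via Proposition \ref{3.8} --- flatly contradicts the corollary you are proving (and Corollary \ref{4.9}); that contradiction should have flagged the error. The closing hedge (use Propositions \ref{3.7} and \ref{3.8} and iterate over corners) only delivers the ``only if'' direction, since $e_{11}M_n(R;s)e_{11}\cong R$; it gives no argument that $R$ being 2-$\Delta U$ forces $M_n(R;s)$ to be 2-$\Delta U$, which is the substantive half.

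The paper's actual route is the one you gesture at but do not carry out: for $n=2$ one checks $M_2(R;s)\cong K_{s^2}(R)$ and applies Corollary \ref{4.9} (note the parameter becomes $s^2$, still central and in $J(R)$); for $n>2$ one writes $M_n(R;s)=\begin{pmatrix} A & M\\ N & R\end{pmatrix}$ with $A=M_{n-1}(R;s)$ and computes the context products explicitly, finding $MN\subseteq sA\subseteq J(A)$ and $NM\subseteq s^2R\subseteq J(R)$ because $s\in J(R)$ is central. Then Proposition \ref{4.7} (equivalently, the radical formula with full off-diagonal blocks, giving $M_n(R;s)/J\cong A/J(A)\times R/J(R)$) together with the induction hypothesis and Proposition \ref{3.1} finishes both directions. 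So the correct mechanism is reduction modulo a radical whose off-diagonal part is everything, collapsing the quotient to a \emph{product of diagonal corners} --- not to a full matrix ring, which is exactly why Proposition \ref{3.8} is never an obstruction here.
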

\begin{proof}
If $n = 1$, then ${\rm M}_n(R;s) = R$. So, in this case, there is nothing to prove. Let $n=2$. By the definition of ${\rm M}_n(R;s)$, we have ${\rm M}_2 (R;s) \cong {\rm K}_{s^2} (R)$. Apparently, $s^2 \in J(R) \cap C(R)$, so the claim holds for $n = 2$ with the help of Corollary \ref{4.9}.
	
To proceed by induction, assume now that $n>2$ and that the claim holds for ${\rm M}_{n-1} (R;s)$. Set $A := {\rm M}_{n-1} (R;s)$. Then, ${\rm M}_n (R;s) =
\begin{pmatrix}
		A & M\\
		N & R
\end{pmatrix}$
is a {\it Morita context}, where $$M =
\begin{pmatrix}
		M_{1n}\\
		\vdots\\
		M_{n-1, n}
\end{pmatrix}
	\quad \text{and} \quad  N = (M_{n1} \dots M_{n, n-1})$$ with $M_{in} = M_{ni} = R$ for all $i = 1, \dots, n-1,$ and
\begin{align*}
		&\psi: N \otimes M \rightarrow N, \quad n \otimes m \mapsto snm\\
		&\phi : M \otimes N \rightarrow M, \quad  m \otimes n \mapsto smn.
\end{align*}
Besides, for $x =
\begin{pmatrix}
		x_{1n}\\
		\vdots\\
		x_{n-1, n}
\end{pmatrix}
	\in M$ and $y = (y_{n1} \dots y_{n, n-1}) \in N$, we write $$xy =
\begin{pmatrix}
		s^2x_{1n}y_{n1} & sx_{1n}y_{n2} & \dots & sx_{1n}y_{n, n-1}\\
		sx_{2n}y_{n1} & s^2x_{2n}y_{n2} & \dots & sx_{2n}y_{n, n-1}\\
		\vdots & \vdots &\ddots & \vdots\\
		sx_{n-1, n}y_{n1} & sx_{n-1, n}y_{n2} & \dots & s^2x_{n-1, n}y_{n, n-1}
\end{pmatrix} \in sA$$ and $$yx = s^2y_{n1}x_{1n} + s^2y_{n2}x_{2n} + \dots + s^2y_{n, n-1}x_{n-1, n} \in s^2 R.$$ Since $s \in J(R)$, we see that $MN \subseteq J(A)$ and $NM\subseteq J(A)$. Thus, we obtain that $$\frac{{\rm M}_n (R; s)}{J({\rm M}_n (R; s))} \cong \frac{A}{J (A)} \times \frac{R}{J (R)}.$$ Finally, the induction hypothesis and Proposition \ref{4.7} yield the claim after all.
\end{proof}

A {\it Morita context} $\begin{pmatrix}
	A & M\\
	N & B
\end{pmatrix}$ is called {\it trivial}, if the context products are trivial, i.e., $MN=0$ and $NM=0$. We now have
$$\begin{pmatrix}
	A & M\\
	N & B
\end{pmatrix}\cong {\rm T}(A\times B, M\oplus N),$$
where
$\begin{pmatrix}
	A & M\\
	N & B
\end{pmatrix}$ is a trivial {\it Morita context} consulting with \cite{20}.

\begin{corollary}\label{4.11}
The trivial Morita context
$\begin{pmatrix}
		A & M\\
		N & B
\end{pmatrix}$ is a 2-$\Delta U$ ring if, and only if, both $A$ and $B$ are 2-$\Delta U$.
\end{corollary}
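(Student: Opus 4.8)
The plan is to exploit the ring isomorphism recorded immediately above the statement, namely
$\begin{pmatrix} A & M\\ N & B\end{pmatrix}\cong {\rm T}(A\times B,\, M\oplus N)$,
which is valid precisely because in the trivial case the context products satisfy $MN=0$ and $NM=0$. Once this identification is in place, the 2-$\Delta U$ property of the left-hand side is equivalent to the 2-$\Delta U$ property of the trivial extension ${\rm T}(A\times B,\, M\oplus N)$.

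Next, I would invoke Proposition \ref{4.5}(1): the trivial extension ${\rm T}(R,K)$ is a 2-$\Delta U$ ring if, and only if, $R$ is a 2-$\Delta U$ ring. Applying this with $R=A\times B$ and $K=M\oplus N$, we get that the trivial Morita context is 2-$\Delta U$ if, and only if, the direct product $A\times B$ is 2-$\Delta U$.

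Finally, I would appeal to Proposition \ref{3.1}, which asserts that a direct product of rings is 2-$\Delta U$ if, and only if, each factor is. Hence $A\times B$ is 2-$\Delta U$ exactly when both $A$ and $B$ are, and chaining the three equivalences yields the claim.

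The only point requiring any care — and thus the (mild) main obstacle — is checking that the displayed map is a genuine ring isomorphism and that $M\oplus N$ carries the correct $(A\times B)$-bimodule structure so that the multiplication in ${\rm T}(A\times B,\, M\oplus N)$ reproduces the Morita-context multiplication with trivial products; since $M$ is an $(A,B)$-bimodule and $N$ is a $(B,A)$-bimodule, $M\oplus N$ becomes an $(A\times B)$-bimodule in the evident componentwise way, and the verification is routine (it is essentially the content of the reference \cite{20}). No deeper difficulty is expected.
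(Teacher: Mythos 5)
Your proposal is correct and follows essentially the same route as the paper: identify the trivial Morita context with the trivial extension ${\rm T}(A\times B,\, M\oplus N)$, then apply Proposition \ref{4.5}(i) and Proposition \ref{3.1}. No issues.
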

\begin{proof}
It is apparent to see that the isomorphisms
$$\begin{pmatrix}
		A & M\\
		N & B
\end{pmatrix} \cong {\rm T}(A\times B,M\oplus N) \cong \begin{pmatrix}
		A\times B & M\oplus N\\
		0 & A \times B
\end{pmatrix}$$ are fulfilled. Then, the rest of the proof follows combining Proposition \ref{4.5}(i) and \ref{3.1}.
\end{proof}

As usual, for an arbitrary ring $R$ and an arbitrary group $G$, the symbol $RG$ stands for the {\it group ring} of $G$ over $R$. Standardly, $\varepsilon(RG)$ designates the kernel of the classical {\it augmentation map} $\varepsilon: RG\to R$, defined by $\varepsilon (\displaystyle\sum_{g\in G}a_{g}g)=\displaystyle\sum_{g\in G}a_{g}$, and this ideal is called the {\it augmentation ideal} of $RG$.
A group $G$ is called a {\it $p$-group} if every element of $G$ is a power of the prime number $p$. Moreover, a group $G$ is said to be {\it locally finite} if every finitely generated subgroup is finite.

\begin{lemma}\label{4.14} \cite[Lemma $2$]{26}.
	Let $p$ be a prime with $p\in J(R)$. If $G$ is a locally finite $p$-group, then $\varepsilon(RG) \subseteq J(RG)$.
\end{lemma}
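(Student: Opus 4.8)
\textbf{Proof plan for Lemma \ref{4.14}.}
The statement is a standard fact about group rings, so the plan is to reduce the locally finite case to the finite case and then exploit nilpotence of the augmentation ideal in the finite $p$-group situation. First I would recall that $\varepsilon(RG)$ is generated as a right $R$-module (indeed as a two-sided ideal) by the elements $g-1$ with $g\in G$, and that for a ring $R$ one has $\varepsilon(RG)\subseteq J(RG)$ precisely when every such $g-1$ is quasi-regular in $RG$ in a uniform way. Since $J(RG)$ is the intersection of all maximal right ideals, and since an ideal lies in $J(RG)$ iff it is ``left quasi-regular'', it suffices to show that for every finite subset $\{g_1,\dots,g_k\}\subseteq G$ the ideal of $RG$ generated by $g_1-1,\dots,g_k-1$ is contained in $J(RG)$; this is where local finiteness enters, because $H:=\langle g_1,\dots,g_k\rangle$ is then a finite $p$-group.

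Next I would treat the finite $p$-group case. If $H$ is a finite $p$-group of order $p^m$, then a classical computation (due essentially to the fact that $RH$ is a crossed-product-type extension built by iterated extensions by $\Z/p$) shows that the augmentation ideal $\varepsilon((R/J(R))H)$ is nilpotent in $(R/J(R))H$: writing $p\in J(R)$, one passes to $\bar R=R/J(R)$ where $p=0$, so $\bar R H$ is an algebra over a ring of characteristic $p$, and then $\varepsilon(\bar R H)^{p^m}=0$ because the augmentation ideal of $\mathbb{F}_p H$ is nilpotent and $\varepsilon(\bar R H)=\bar R\otimes_{\mathbb F_p}\varepsilon(\mathbb F_pH)$-style control applies. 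A nilpotent ideal lies in the Jacobson radical, so $\varepsilon(\bar R H)\subseteq J(\bar R H)$. Pulling this back, together with $J(R)G\subseteq J(RG)$ (which holds since $p\in J(R)$ forces $J(R)$ to be handled along the same lines, or one invokes that $J(R)RG$ is contained in $J(RG)$ when $RG$ is suitably integral over $R$), gives $\varepsilon(RH)\subseteq J(RH)$ for each such finite $H$.

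Then I would globalize: for arbitrary $\alpha\in\varepsilon(RG)$ the support of $\alpha$ is finite, so $\alpha\in RH$ for some finitely generated, hence finite, $p$-subgroup $H$, and in fact $\alpha\in\varepsilon(RH)$. Every element of $J(RH)$ is quasi-regular in $RH$, and a quasi-inverse computed inside $RH$ remains a quasi-inverse inside the overring $RG$; moreover one needs this uniformly, i.e. that the whole ideal $\varepsilon(RG)$ is quasi-regular, which follows because any finite $R$-linear combination of elements of $\varepsilon(RG)$ again has finite support and lies in some $\varepsilon(RH)\subseteq J(RH)$. Hence the two-sided ideal $\varepsilon(RG)$ consists of quasi-regular elements and is quasi-regular as an ideal, so $\varepsilon(RG)\subseteq J(RG)$.

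The main obstacle is the passage from ``each element of $\varepsilon(RG)$ is quasi-regular'' to ``the ideal $\varepsilon(RG)$ lies in $J(RG)$'': being in the radical is a statement about the ideal, not just its elements, so I must check that for any $\beta\in RG$ the element $1-\beta\gamma$ (or $1-\gamma\beta$) is a unit whenever $\gamma\in\varepsilon(RG)$, and this requires noting that $\beta\gamma$ still has finite support inside some finite $H$ with $\beta\gamma\in\varepsilon(RH)$ — using that $\varepsilon(RG)$ is a two-sided ideal and $RH$ is chosen large enough to contain the relevant finite support — and then invoking the finite-group conclusion $\varepsilon(RH)\subseteq J(RH)$ uniformly. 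Since the paper cites this as \cite[Lemma 2]{26}, the intended proof is presumably exactly this reduction, and the nilpotence of the augmentation ideal of a finite $p$-group algebra over a ring in which $p$ is a non-unit (indeed radical) is the technical heart.
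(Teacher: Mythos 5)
First, a point of comparison: the paper does not prove this lemma at all — it is quoted verbatim as Lemma 2 of Zhou's paper on clean group rings — so there is no in-paper argument to measure you against. Your proposal reconstructs the standard proof, and in outline it is the right one: reduce to finite $p$-subgroups via local finiteness, use nilpotence of the augmentation ideal of $\mathbb{F}_pH$ for a finite $p$-group $H$ after passing to $\bar R=R/J(R)$ (which has characteristic $p$ because $p\in J(R)$), and then return to $RG$. Also, the passage from elementwise quasi-regularity to the ideal statement, which you single out as the main obstacle, is in fact unproblematic: $\varepsilon(RG)$ is a two-sided ideal, every element of it has finite support and hence lies in $\varepsilon(RH)$ for some finite $p$-subgroup $H$, and once each such element is quasi-regular, the standard characterization of $J(RG)$ as the largest one-sided ideal consisting of quasi-regular elements immediately gives $\varepsilon(RG)\subseteq J(RG)$; no further uniformity needs to be checked beyond what you already wrote.

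The one step you assert without a real justification is $J(R)\cdot RH\subseteq J(RH)$ for a finite group $H$: ``suitably integral over $R$'' is not a reason, and for general module-finite ring extensions $S\supseteq R$ the inclusion $J(R)S\subseteq J(S)$ is not automatic. For group rings of finite (indeed locally finite) groups this is a theorem of Connell, and the finite case has a short proof you should include: if $V$ is a simple left $RH$-module, it is cyclic over $RH$, hence finitely generated as an $R$-module (by the finitely many elements $hv$), and $J(R)V$ is an $RH$-submodule since $J(R)$ is an ideal of $R$ whose elements commute with $H$; simplicity plus Nakayama force $J(R)V=0$, so $J(R)\cdot RH$ annihilates every simple $RH$-module and lies in $J(RH)$. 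With this observation you can even shortcut the pullback: every simple $RH$-module becomes a simple $\bar RH$-module, where $\varepsilon(\bar RH)$ is nilpotent (your description of it via $\bar R\otimes_{\mathbb{F}_p}\varepsilon(\mathbb{F}_pH)$ is correct because coefficients commute with group elements), hence each $h-1$ annihilates it, giving $\varepsilon(RH)\subseteq J(RH)$ directly; the locally finite case then follows exactly as in your last two paragraphs. So the approach is sound, but the $J(R)H\subseteq J(RH)$ step must be argued as above rather than waved through.
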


\begin{proposition} 
(i) If $RG$ is a 2-\(\Delta U\) ring, then $R$ is also a 2-\(\Delta U\) ring.\par
(ii) If \( R \) is a 2-\(\Delta U\) ring and \( G \) is a locally finite \( p \)-group, where \( p \) is a prime number such that \( p \in J(R) \), then \( RG \) is a 2-\(\Delta U\) ring.
\end{proposition}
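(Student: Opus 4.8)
The plan is to handle the two implications separately, using the epimorphism/quotient machinery already set up in Section 3.

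For part (i), the augmentation map $\varepsilon : RG \to R$ is a ring epimorphism. The key observation is that it sends units to units in a way that lifts: every $u \in U(R)$ can be viewed as the element $u\cdot 1_G \in RG$, which is a unit of $RG$ with $\varepsilon(u\cdot 1_G) = u$. Hence the units of $R$ lift to units of $RG$, and so by Proposition \ref{3.2} the factor ring $R \cong RG/\varepsilon(RG)$ is 2-$\Delta U$. (Alternatively, one can simply argue directly: given $u \in U(R)$, lift to $u \cdot 1_G \in U(RG)$, apply the 2-$\Delta U$ hypothesis of $RG$ to get $(u\cdot 1_G)^2 = 1 + r$ with $r \in \Delta(RG)$, then apply $\varepsilon$ and use that $\varepsilon(\Delta(RG)) \subseteq \Delta(R)$, which holds because a ring epimorphism carries $\Delta$ into $\Delta$ — see \cite[Proposition 6]{2}.)

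For part (ii), the idea is to quotient out the augmentation ideal. By Lemma \ref{4.14}, since $G$ is a locally finite $p$-group and $p \in J(R)$, we have $\varepsilon(RG) \subseteq J(RG)$. Moreover $\varepsilon(RG)$ is an ideal of $RG$ with $RG/\varepsilon(RG) \cong R$. Since $R$ is 2-$\Delta U$ by hypothesis, Theorem \ref{3.5} (applied with $I = \varepsilon(RG) \subseteq J(RG)$) immediately yields that $RG$ is 2-$\Delta U$.

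The main obstacle is really just verifying the hypotheses cleanly: that $\varepsilon(RG)$ is a two-sided ideal with quotient isomorphic to $R$ (standard), and that Lemma \ref{4.14} applies — but this is exactly the content of the cited lemma, so there is no genuine difficulty. The only subtlety worth a sentence is, in part (i), making the point that we do not need to know that \emph{all} units of $R$ arise this way; we only need each individual unit to lift, which the map $u \mapsto u\cdot 1_G$ provides. With both directions in hand the proposition follows.
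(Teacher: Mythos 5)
Your proposal is correct and takes essentially the same route as the paper: for (i) you lift $u\in U(R)$ to a unit of $RG$ and transfer $u^2-1$ back into $\Delta(R)$ (the paper does this directly, using $U(RG)\cap R\subseteq U(R)$, rather than invoking Proposition \ref{3.2}, but the idea is identical), and for (ii) you apply Lemma \ref{4.14} and Theorem \ref{3.5} exactly as the paper does. One caution about your parenthetical aside: the blanket claim that a ring epimorphism carries $\Delta$ into $\Delta$ is false in general (for instance $F_2\langle x,y\rangle/\langle x^2\rangle \to M_2(F_2)$ with $x\mapsto e_{12}$, $y\mapsto e_{21}$ sends $x\in\Delta$ to $e_{12}\notin\Delta(M_2(F_2))=0$); the inclusion $\varepsilon(\Delta(RG))\subseteq\Delta(R)$ holds here only because units of $R$ lift along the augmentation map, which is precisely the lifting hypothesis of Proposition \ref{3.2}, so your main argument is unaffected.
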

\begin{proof}
($i$) Assume $u \in U(R)$, then $u \in U(RG)$. Thus, $u^2=1+r$ where $r \in \Delta(RG)$. Since $r=1-u^2 \in R$, it suffices to show that $r \in \Delta(R)$, which is obvious because for any $v \in U(R) \subseteq U(RG)$, we have $v-r \in U(RG) \cap R \subseteq U(R)$. Therefore, $r \in \Delta(R)$.\par
($ii$) By Lemma \ref{4.14}, we have $\varepsilon(RG) \subseteq J(RG)$. On the other hand, since \( RG/\varepsilon(RG) \cong R \), by Theorem \ref{3.5}, we conclude that \( RG \) is a 2-\(\Delta U\) ring.
\end{proof}

\begin{proposition}
If the group ring \( RG \) is a 2-\(\Delta U\) ring with \( 2 \in \Delta(RG) \), then \( G \) is a 2-group.
\end{proposition}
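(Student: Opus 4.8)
The plan is to show that if some element $g \in G$ has odd order, we reach a contradiction with the $2$-$\Delta U$ property together with $2 \in \Delta(RG)$. So suppose $g \in G$ has order $n$ with $n$ odd and $n > 1$. Since $n$ is odd and $g$ is a unit of $RG$ (as $g^{-1} = g^{n-1}$), the element $g$ lies in $U(RG)$, and hence so does $g^2$. The key observation is that the cyclic subgroup $\langle g \rangle$ generates a commutative subring, and within it we can exploit that $g$ has odd order to produce a specific unit whose square is forced to be $1$ modulo $\Delta(RG)$, yet is visibly not of that form.

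First I would use the $2$-$\Delta U$ hypothesis on the unit $g$: we have $g^2 = 1 + r$ for some $r \in \Delta(RG)$. More usefully, I want to iterate or combine: since $g$ has odd order $n = 2k+1$, note that $g = g^{n+1} = (g^{(n+1)/2})^2 = (g^{k+1})^2$, so $g$ itself is the square of the unit $g^{k+1}$. Applying $2$-$\Delta U$ to $g^{k+1}$ gives $g = (g^{k+1})^2 = 1 + s$ with $s \in \Delta(RG)$, i.e. $g - 1 \in \Delta(RG)$. Then by the augmentation map $\varepsilon : RG \to R$ (which is a ring homomorphism), $\varepsilon(\Delta(RG)) \subseteq \Delta(R)$ — indeed images of $\Delta$ under surjections land in $\Delta$, as used implicitly elsewhere in the paper; here $\varepsilon(g-1) = 1 - 1 = 0$, which is fine and gives nothing. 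So instead I would work inside $RG$ directly: $g - 1 \in \Delta(RG)$ means $g - 1$ is quasi-regular and lies in the Jacobson radical of the subring generated by units; in particular $1 - (g-1) \cdot(\text{anything})$ stays a unit. The cleaner route: $\Delta(RG) \subseteq J(T)$ where $T$ is generated by units, but more simply, since $\Delta(RG)$ is a (Jacobson radical) subring, $g - 1 \in \Delta(RG)$ forces $g - 1$ to be nilpotent-like — precisely, all its powers stay in $\Delta(RG)$, and $1 + R(g-1) \subseteq$ ... Actually the decisive point is that $g-1 \in \Delta(RG)$ implies $g - 1 \in J(RG)$ is false in general, but $g-1$ being in a radical subring closed under unit multiplication still lets us deduce $(g-1)^m \to$ behaves well.

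The cleanest argument: from $g - 1 \in \Delta(RG)$ and $2 \in \Delta(RG)$, since $\Delta(RG)$ is a subring, the element $1 + g + g^2 + \cdots + g^{n-1}$ can be rewritten. Set $h = g - 1 \in \Delta(RG)$. Then $g = 1 + h$ and $0 = g^n - 1 = (1+h)^n - 1 = nh + \binom{n}{2}h^2 + \cdots + h^n = h\bigl(n + \binom{n}{2}h + \cdots + h^{n-1}\bigr)$. Now $n$ is odd, and I want to argue $n$ is a unit modulo $\Delta(RG)$: since $2 \in \Delta(RG)$ we have $n \equiv 1 \pmod{2RG} \subseteq \Delta(RG)$ when $n$ is odd, so $n = 1 + (n-1)$ with $n - 1$ even, hence $n - 1 \in 2\mathbb{Z} \subseteq \Delta(RG)$, so $n \in 1 + \Delta(RG) \subseteq U(RG)$. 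Then the bracketed factor $n + \binom{n}{2}h + \cdots + h^{n-1} \equiv n \pmod{\Delta(RG)}$, and since $n$ is a unit and this element differs from $n$ by something in $\Delta(RG)$, it is itself a unit of $RG$. Cancelling the unit from $h \cdot (\text{unit}) = 0$ yields $h = 0$, i.e. $g = 1$. Since $g$ was an arbitrary odd-order element, $G$ has no nontrivial elements of odd order, so every element has $2$-power order, i.e. $G$ is a $2$-group.

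The main obstacle is justifying that the bracketed factor $n + \binom{n}{2}h + \cdots + h^{n-1}$ is genuinely a unit of $RG$, not merely a unit modulo $\Delta(RG)$; this requires knowing that $1 + \Delta(RG) \subseteq U(RG)$ (true since $\Delta(RG)$ is a Jacobson radical subring, by \cite[Theorem 3]{2}) and that an element congruent to a unit modulo $\Delta(RG)$ is a unit — which follows because $\Delta(RG)$ is closed under multiplication by units, so $u^{-1}(\text{element}) \in 1 + \Delta(RG) \subseteq U(RG)$. I should also double-check the edge reduction $g = (g^{(n+1)/2})^2$ is not even needed: directly from $g^2 = 1 + r$ one gets, by induction on odd powers, $g^{2m} = 1 + r_m$ with $r_m \in \Delta(RG)$, and taking $2m \equiv 1 \pmod n$ (possible since $\gcd(2,n)=1$) gives $g = g^{2m} = 1 + r_m$, so $g - 1 \in \Delta(RG)$ with no case analysis. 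That is the version I would write up.
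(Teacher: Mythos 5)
Your treatment of odd finite order is correct and in fact tighter than the paper's: from $2m\equiv 1 \pmod n$ you get $g-1\in\Delta(RG)$ by applying the $2$-$\Delta U$ condition to the single unit $g^{m}$, then the factorization $0=g^{n}-1=h\bigl(n+\binom{n}{2}h+\cdots+h^{n-1}\bigr)$ with $h=g-1$, the observation that $n\in 1+\Delta(RG)\subseteq U(RG)$ because $n-1$ is even and $2\in\Delta(RG)$, and the fact that $\Delta(RG)+U(RG)\subseteq U(RG)$ (immediate from the definition of $\Delta$) make the bracket a genuine unit, forcing $h=0$. This replaces the paper's Claim~2, whose ``continuing this process'' induction producing $1+g+\cdots+g^{2k}\in U(RG)$ is vaguer, so on the torsion part your route is a real improvement.

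However, there is a genuine gap: you conclude ``$G$ has no nontrivial elements of odd order, so every element has $2$-power order,'' and that implication is false without first knowing that every element of $G$ has \emph{finite} order. An infinite cyclic group has no nontrivial odd-order elements but is not a $2$-group, and your argument produces no contradiction for an element $g$ of infinite order, since the whole mechanism rests on the relation $g^{n}=1$ with $n$ odd (needed both for $g=g^{2m}$ and for the factorization of $g^{n}-1$). So as written you have only shown that every torsion element of $G$ has $2$-power order. The paper spends its Claim~1 on exactly this point: for any $g$, the hypotheses give $g^{2}+1\in\Delta(RG)$ and hence $1+g+g^{2}\in U(RG)$, and if $g$ had infinite order a support (leading-coefficient) argument on $(1+g+g^{2})\sum a_{i}g^{i}=1$ yields a contradiction, so all elements have finite order. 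You need to add an argument of this kind (or some other way of excluding infinite order) before your final sentence; with it, your proof is complete.
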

\begin{proof}
We first consider two claims:
	
\textbf{Claim 1: } Every element \( g \in G \) has a finite order.
	
Assume there exists \( g \in G \) with infinite order. Since \( RG \) is a 2-\(\Delta U\) ring, we have \( 1-g^2 \in \Delta(RG) \). Given \( 2 \in \Delta(RG) \), we then have \( 1+g^2 \in \Delta(RG) \), implying \( 1+g+g^2 \in U(RG) \). Therefore, there exist integers \( n < m \) and \( a_i \) with \( a_n \neq 0 \neq a_m \) such that
	\[
	(1+g+g^2)\sum_{n}^{m} a_ig^i=1.
	\]
This leads to a contradiction, thus every element \( g \in G \) must have finite order.
	
\textbf{Claim 2: } For any \( g \in G \) and \( k \in \mathbb{N} \), we have \(\sum_{i=0}^{n=2k} g^i \in U(RG)\).
	
We will prove this for \( k=1, 2 \), and the rest follows similarly.
	
For \( k=1 \), for any \( g \in G \), we have \( 1-g^2 \in \Delta(RG) \). Since \( 2 \in \Delta(RG) \), we then have \( 1+g^2 \in \Delta(RG) \) and hence \( 1+g+g^2 \in U(RG) \).
	
For \( k=2 \), for any \( g \in G \), since \( g, g^2 \in U(RG) \), we have:

\( 1-g^2 \in \Delta(RG) \) and hence \( 1+g^2 \in \Delta(RG) \). Thus, \( g+g^3 \in \Delta(RG) \). On the other hand \( 1-g^4 \in \Delta(RG) \) and hence \( 1+g^4 \in \Delta(RG) \).

Since \(\Delta(RG)\) is closed under addition, it follows that \( 2+g+g^2+g^3+g^4 \in \Delta(RG) \), implying \( g+g^2+g^3+g^4 \in \Delta(RG) \) and thus \( 1+g+g^2+g^3+g^4 \in U(RG) \).
	
Continuing this process, we can show that \(\sum_{i=0}^{n=2k} g^i \in U(RG)\).
	
If \( g \in G \) has an order \( p \) that does not divide 2, then \( p \) is odd, hence \( p-1=2k \). Consequently, \(\sum_{i=0}^{2k} g^i \in U(RG)\), and since \( (1-g)(\sum_{i=0}^{2k} g^i)=0 \), we have \( 1-g=0 \), which is a contradiction. Thus, \( G \) must be a 2-group.
\end{proof}

\medskip

\section{Open Questions}

We finish our work with the following questions which allude us.

A ring $R$ is called $UQ$ if $U(R)=1+QN(R)$ (see \cite{21}).

\begin{problem}\label{5.1}
Examine those rings $R$ whose for each $u\in U(R)$, $u^2=1+q$ where $q\in QN(R)$ (i.e., 2-$UQ$ rings).
\end{problem}

\begin{problem}\label{5.2}
Characterize regular (or semi-regular) 2-$UQ$ rings.
\end{problem}

A ring $R$ is called $UNJ$ if $U(R)=1+Nil(R)+J(R)$ (see \cite{22}).

\begin{problem}\label{5.3}
Examine those rings $R$ whose for each $u\in U(R)$, $u^2=1+n+j,$ where $n\in Nil(R)$ and $j\in J(R)$ (i.e., 2-$UNJ$ rings).
\end{problem}

\begin{problem}\label{5.4}
Characterize regular (or semi-regular) 2-$UNJ$ rings.
\end{problem}

\medskip
\medskip


\medskip 

\noindent{\bf Funding:} The first and second authors are supported by Bonyad-Meli-Nokhbegan and receive funds from this foundation. The work of the third-named author, P.V. Danchev, is partially supported by the Junta de Andaluc\'ia, Grant FQM 264.

\vskip3.0pc


\begin{thebibliography}{99}
	
\bibitem{4}
A. Badawi, {\it On abelian $\pi$-regular rings}, Commun. Algebra 25(4) (1997), 1009–1021.

\bibitem{13}
G. Calugareanu and Y. Zhou, {\it Rings whose clean elements are uniquely clean}, Mediterr. Math. J., 2023: {\bf 20}(1), paper 15.

\bibitem{16}
H. Chen, {\it On strongly J-clean rings}, Commun. Algebra {\bf 38} (2010), 3790--3804.

\bibitem{15}
W.Chen, {\it On constant products of elements in skew polynomial rings}, Bulletin of the Iranian Mathematical Society, 41(2)(2015), 453-462.

\bibitem{6}
H. Chen and M. Sheibani, {\it Strongly 2-nil-clean rings}, J. Algebra Appl., 16, 1750178 (2017).		
	
\bibitem{1}
J. Cui, X. Yin, {\it Rings with 2-UJ property}, Commun. Algebra {\bf 48}(4) (2020), 1382--1391.	

\bibitem{21}
P. V. Danchev, A.Javan, O.Hasanzadeh, A. Moussavi, {\it Rings with u-1 quasinilpotent for each unit u}, J. Algebra Appl. (2025).

\bibitem{12}
P. V. Danchev and T. Y. Lam, {\it Rings with unipotent units}, Publ. Math. (Debrecen) {\bf 88} (3-4) (2016), 449--466.

\bibitem{5}
A.J. Diesl, Nil clean rings, \textit{J. Algebra} {\bf 383} (2013), 197--211.

\bibitem{7}
F. Karabacak, M. T. Kosan, T. Quynh, and D. Tai, {\it A generalization of UJ-rings},  J. Algebra Appl., 20, 2150517 (2021).

\bibitem{11}
A. Karimi-Mansoub, T. Ko\c{s}an and Y. Zhou, {\it Rings in which every unit is a sum of a nilpotent and an idempotent}, Contemp. Math. {\bf 715} (2018), 189--203.

\bibitem{20}
M. T. Kosan, {\it The P. P. property of trivial extensions}, J. Algebra Appl. {\bf 14} (2015).

\bibitem{14}
M. T. Kosan, A. Leroy, J. Matczuk, {\it On UJ-rings}, Commun. Algebra 46(5)(2018), 2297-2303.

\bibitem{22}
M. T. Kosan, Y.C. Quynh, J. Zemliska, {\it UNJ-Rings}, J. Algebra Appl. {\bf 19} (2020).

\bibitem{9}
M. T. Kosan, S. Sahinkaya, Y. Zhou, {\it On weakly clean rings}, Commun. Algebra {\bf 45}(8) (2017), 3494--3502.

\bibitem{17}
M. T. Kosan, T. Yildirim, Y. Zhou, {\it Rings whose elements are the sum of a tripotent and an element from the Jacobson radical}, Can. Math. Bull., (2019).

\bibitem{18}
P. A. Krylov, {\it Isomorphism of generalized matrix rings}, Algebra Logic {\bf 47} (4) (2008), 258-262.

\bibitem{23}
T. Y. Lam, {\it Exercises in classical ring theory}, Springer-verlag, New York (2003), second edition.
	
\bibitem{2}
A. Leroy, J. Matczuk, {\it Remarks on the Jacobson radical rings}, modules and codes, 269-276, Contemp. Math., 727, Amer. Math. Soc., Providence, RI, 2019.

\bibitem{3}
J. Levitzki, {\it On the structure of algebraic algebras and related rings}, Trans. Amer. Math. Soc. 74(1953), 384-409.

\bibitem{8}
W. K. Nicholson, {\it Lifting idempotents and exchange rings}, Trans. Amer. Math. Soc. 229:269–278 (1977).

\bibitem{25}
W. K. Nicholson, Y. Zhou, {\it Clean general rings}, J. Algebra {\bf 291} (1), 297–311, (2005).

\bibitem{10}
M. Sheibani, and H. Chen, {\it Rings in which elements are sums of tripotents and nilpotents}, J. Algebra Appl., 17, 1850042 (2018).

\bibitem{26}
G. Tang, C. Li, and Y. Zhou, {\it Study of Morita contexts}, Commun. Algebra {\bf 42} (4) (2014), 1668-1681.

\bibitem{19}
G. Tang and Y. Zhou, {\it A class of formal matrix rings}, Linear Algebra Appl. {\bf 438} (2013), 4672-4688.

\bibitem{26}
Y. Zhou, {\it On clean group rings}, Advances in Ring Theory, Treads in Mathematics, Birkhauser, Verlag Basel/Switzerland, 2010, pp. 335--345.


\end{thebibliography}
\end{document}